\newcommand{\pres}[2]{\langle {#1}\ |\ {#2} \rangle}
\newcommand{\onetwoset}[3]{\Bigg \{ {#1}\ \Bigg |\
\begin{array}{l}
{#2}\\{#3}
\end{array}\Bigg \}}
\newcommand{\twolineset}[2]{\Bigg \{
\begin{array}{l}
{#1}\\{#2}
\end{array}\Bigg \}}
\newcommand{\Z}{\mathbb{Z}}
\newcommand{\N}{\mathbb{N}}
\newtheorem{theorem}{Theorem}
\newtheorem{lemma}[theorem]{Lemma}
\newtheorem{proposition}[theorem]{Proposition}
\newtheorem{corollary}[theorem]{Corollary}
\newtheorem{example}[theorem]{Example}
\newtheorem{conjecture}[theorem]{Conjecture}
\newtheorem{question}[theorem]{Question}
\numberwithin{theorem}{section}
\begin{document}
\title{An investigation into the cyclically presented groups with length three positive relators}
\author{Esamaldeen Mohamed\thanks{The first named author acknowledges financial support of the Libyan Government.}~~and Gerald Williams}

\maketitle

\begin{abstract}
We continue research into the cyclically presented groups with length three positive relators. We study small cancellation conditions, SQ-universality,  and hyperbolicity, we obtain the Betti numbers of the groups' abelianisations, we calculate the orders of the abelianisations of some groups, and we study isomorphism classes of the groups. Through computational experiments we assess how effective the abelianisation is as an invariant for distinguishing non-isomorphic groups.
\end{abstract}

\noindent Keywords: Cyclically presented group, isomorphism, small cancellation, hyperbolic group, abelianisation.

\noindent MSC: 20F05, 20F06.

\section{Introduction}\label{sec:intro}

The  \em cyclically presented group \em $G_n(w)$ is the group defined by the \em cyclic presentation \em
\begin{alignat}{1}
P_n(w)= \pres{x_0,\ldots ,x_{n-1}}{w, \theta(w), \ldots , \theta^{n-1}(w)}\label{eq:cycpres}
\end{alignat}
where $w = w(x_0, \ldots , x_{n-1})$ is a word in the free group $F_n$ with generators $x_0, \ldots ,x_{n-1}$
and $\theta : F_n \rightarrow F_n$ is the automorphism of $F_n$ given by $\theta(x_i) = x_{i+1}$ for
each $0\leq i< n$ (subscripts mod~$n$). In this article we continue investigations into the cyclically presented groups $\Gamma_n(k,l)$ defined by the cyclic presentations
\begin{alignat}{1}
P_n(k,l)=\pres{x_0,\ldots, x_{n-1}}{x_ix_{i+k}x_{i+l}\ (0\leq i< n)}\label{eq:GammaPres}
\end{alignat}
(where $0\leq k,l< n$, and subscripts are taken mod~$n$). This family of groups was introduced in~\cite{CRS} and studied in depth in~\cite{EdjvetWilliams} and further in~\cite{BogleyWilliams17}. (A reduced element of a free group is \em positive \em if all of its exponents are positive so these are precisely the cyclic presentations whose relators are positive words of length~3.) The related cyclically presented groups
\begin{alignat}{1}
G_n(m,k)=\pres{x_0,\ldots,x_{n-1}}{x_ix_{i+m}=x_{i+k}\ (0\leq i< n)}.\label{eq:Gnmk}
\end{alignat}
were studied in~\cite{CHR},\cite{BV},\cite{W},\cite{GilbertHowie},\cite{COS},\cite{HowieWilliams},\cite{HowieWilliams17} -- see~\cite{WRevisited} for a survey. In particular, Section~6 of~\cite{HowieWilliams} considers small cancellation conditions and SQ-universality for $G_n(m,k)$, and~\cite{COS} considers isomorphism classes of the groups $G_n(m,k)$. The purpose of this article, continuing work from~\cite{EdjvetWilliams},\cite{BogleyWilliams17}, is to provide similar theoretical and computational studies to these for the groups $\Gamma_n(k,l)$. We use GAP~\cite{GAP} to perform our experiments.

Easy reductions allow us to assume $(n,k,l)=1$, $k\neq 0,l\neq 0,k\neq l$ (see Section~\ref{sec:preliminaries}). Under these hypotheses, the article~\cite{EdjvetWilliams} obtained information about the groups $\Gamma_n(k,l)$ in terms of four true/false conditions (A),(B),(C),(D) defined in terms of congruences involving the parameters $n,k,l$ (see Section~\ref{sec:preliminaries} for definitions). It was shown in~\cite{EdjvetWilliams} that if none of the conditions (B),(C),(D) hold then the presentation~(\ref{eq:GammaPres}) satisfies the small cancellation condition C(3)-T(6), and hence (by~\cite{EdjvetHowie}) $\Gamma_n(k,l)$ contains a non-abelian free subgroup. In Section~\ref{sec:smallcancSQ} we continue this line of enquiry and show that the presentation~(\ref{eq:GammaPres}) does not satisfy any of stronger standard non-metric small cancellation conditions but that, for $n\neq 7$, if none of (B),(C),(D) hold then it does satisfy an alternative strengthening known as C(3)-T(6)-non-special, and hence (by~\cite{HowieSQ}) the group is SQ-universal. This suggests that many of the C(3)-T(6) groups $\Gamma_n(k,l)$ may in fact be hyperbolic, and we show that there are at most finitely many such groups that are not. The analysis to this point gives that the only case where the SQ-universality status is unknown is the case where (D) holds and (B) does not. In these cases $n$ is even and the group $\Gamma_n(k,l)$ can be shown to be isomorphic to $\Gamma_n(1,n/2-1)$. We show in Corollary~\ref{cor:Gamman1n/2-1n=8mod16} that $\Gamma_n(1,n/2-1)$ is large, and hence SQ-universal, when $(n,16)=8$.

The abelianisation $\Gamma_n(k,l)^\mathrm{ab}$ is infinite if and only if condition (A) holds (see~\cite[Lemma~2.2]{EdjvetWilliams}). In Section~\ref{sec:abelianisation} we improve this result to calculate the Betti number, or torsion-free rank, of $\Gamma_n(k,l)^\mathrm{ab}$. It follows from this that unlike, for example, the cyclically presented groups considered in~\cite{WilliamsLOG} the Betti number is not a useful tool for proving non-isomorphism amongst groups $\Gamma_n(k,l)$ with infinite abelianisation. In this section we also calculate $|\Gamma_n(1,n/2-1)^\mathrm{ab}|$, the order of the abelianisation.

For a fixed $n$, there are typically many isomorphisms amongst the groups $\Gamma_n(k,l)$. A special case of Problem~2.9 of~\cite{CRS} is to find a system of arithmetic conditions on the parameters that completely determines the isomorphism type of the group $\Gamma_n(k,l)$. In Sections~\ref{sec:n=p^aq^b}--\ref{sec:210} we investigate isomorphism classes of these groups. In Section~\ref{sec:n=p^aq^b} we consider isomorphism classes in the case when $n$ has at most two prime factors; in Section~\ref{sec:n<19} we apply this to determine precisely the isomorphism classes for $n\leq 29$. In Section~\ref{sec:ABCD} we analyse the general case $n\geq 19$ in terms of the four conditions (A),(B),(C),(D) and reduce the problem to consideration of the cases where none of (B),(C),(D) hold. In Section~\ref{sec:FFFF} we consider the case when none of (A),(B),(C),(D) hold and, when $n$ has certain divisors, we use properties of the groups' abelianisations to provide sets of pairwise non-isomorphic groups. In Section~\ref{sec:210} we consider groups where none of (B),(C),(D) hold and, for $n<210$, we assess how effective the abelianisation is as an invariant for distinguishing non-isomorphic groups.

\section{Preliminaries}\label{sec:preliminaries}

If the greatest common divisor $d=(n,k,l)>1$ then $\Gamma_n(k,l)$ is the free product of $d$ copies of (the non-trivial group) $\Gamma_{n/d}(k/d,l/d)$ (see~\cite[Lemma~2.4]{CRS}) and, with $(n,k,l)=1$, if $k=0$ or $l=0$ or $k=l$ then $\Gamma_n(k,l)$ is a finite cyclic group. Therefore it suffices to consider these groups under the hypotheses $n\geq 3$, $1\leq k,l< n$, $k\neq l$, $(n,k,l)=1$. The (A),(B),(C),(D) conditions alluded to earlier are defined as follows:
\begin{alignat*}{1}
A(n,k,l)
&=\begin{cases}
  T&\mathrm{if}~n\equiv 0~\mathrm{and}~k+l\equiv0~\mathrm{mod}~3,\\
  F&\mathrm{otherwise},
\end{cases}\\
B(n,k,l)
&=\begin{cases}
  T&\mathrm{if}~(k+l)\equiv 0~\mathrm{or}~(2l-k)\equiv0~\mathrm{or}~(2k-l)\equiv 0~\mathrm{mod}~n,\\
  F&\mathrm{otherwise},
\end{cases}\\
C(n,k,l)
&=\begin{cases}
  T&\mathrm{if}~3l\equiv 0~\mathrm{or}~3k\equiv 0~\mathrm{or}~3(l-k)\equiv 0~\mathrm{mod}~n,\\
  F&\mathrm{otherwise},
\end{cases}\\
D(n,k,l)
&=\begin{cases}
  T&\mathrm{if}~2(k+l)\equiv 0~\mathrm{or}~2(2l-k)\equiv 0~\mathrm{or}~2(2k-l)\equiv 0~\mathrm{mod}~n,\\
  F&\mathrm{otherwise}.
\end{cases}
\end{alignat*}
We shall sometimes abuse notation and say, for example, that condition (A) holds (resp. does not hold) if $A(n,k,l)=T$ (resp. $A(n,k,l)=F$).

We summarize key algebraic results obtained in~\cite{EdjvetWilliams},\cite{BogleyWilliams17} in terms of the three conditions (A),(B),(C) in Table~\ref{tab:EdjvetWilliams}. (Note that if (B) and (C) hold then (A) holds, so there is no $(F,T,T)$ entry.) In this table $\alpha=3(2^{n/3}-(-1)^{n/3})$, $\gamma=(2^{n/3}-(-1)^{n/3})/3$ and `large' denotes a \em large \em group, i.e.\,a group that has a finite index subgroup that maps onto the free group of rank~2~\cite{BaumslagPride},\cite{Gromov}. A group $G$ is \em SQ-universal \em if every countable group can be embedded in a quotient of $G$. Large groups are SQ-universal and therefore contain a non-abelian free subgroup.

Table~\ref{tab:EdjvetWilliams} is based on~\cite[Table~1]{EdjvetWilliams} but with a few changes. The first concerns the group in entry~2. In~\cite[Table~1]{EdjvetWilliams} this group was described as metacyclic and its precise identification was conjectured in~\cite[Conjecture~3.4]{EdjvetWilliams}. That conjecture was subsequently proved in~\cite[Corollary~D]{BogleyWilliams17}, so we now include the precise identification. The notation $B(M,N,r,\lambda)$ refers to the metacyclic group defined by the presentation
\[\pres{a,b}{a^M=1, bab^{-1}=a^r, b^N=a^{\lambda M/(M,r-1)}}\]
where $r^N\equiv 1$~mod~$M$. Any finite metacyclic group $L$ with a metacyclic extension $\Z_M \hookrightarrow L\twoheadrightarrow \Z_N$ has a presentation of this form (see \cite[Chapter IV.2]{BeylTappe} or \cite[Chapter~3]{Johnson}).

\begin{table}
    \begin{tabular}{|cc|c|c|c|c|}
     \hline
     $(A,B,C)$ & &$\Gamma=\Gamma_n(k,l)$   &$\Gamma^\mathrm{ab}$ & def$(\Gamma)$ \\\hline
     $(F,F,F)$ & & infinite and torsion-free      &finite~$\neq 1$& 0 \\\hline
     $(F,F,T)$ & & $B\left((2^n-(-1)^n)/3,3,2^{2n/3},1\right)$ & $\Z_\alpha$ & 0 \\\hline
     $(F,T,F)$  & & $\Z_3$   &$\Z_3$ & 0 \\\hline
     $(T,F,F)$  & $n\neq18 $ &  Large & $\Z^2\oplus$~finite   &0 \\\hline
     $(T,F,F)$  & $n=18 $  &  $\Z*\Z*\Z_{19}$    & $\Z^2\oplus\Z_{19}$ & 2 \\\hline
     $(T,F,T)$  & & $\Z*\Z*\Z_{\gamma}$    & $\Z^2\oplus\Z_{\gamma}$ & 2 \\ \hline
     $(T,T,F)$  & & $\Z*\Z$     & $\Z^2$ & 2 \\\hline
     $(T,T,T)$  & & $\Z*\Z$    & $\Z^2$ & 2 \\
      \hline
   \end{tabular}%
\caption{Groups $\Gamma_n(k,l)$ with $n\geq 3$, $(n,k,l)=1$, $k\neq l$, by (A),(B),(C) conditions~(\cite{EdjvetWilliams},\cite{BogleyWilliams17}).\label{tab:EdjvetWilliams}}
\end{table}

The second change is to entry~4. In~\cite[Table~1]{EdjvetWilliams} the entry $\Gamma^\mathrm{ab}$ was marked as `infinite'. However, in light of Theorem~\ref{thm:betti} (to be proved in Section~\ref{sec:abelianisation}) we now know that in these cases $\Gamma_n(k,l)^\mathrm{ab}\cong \Z^2 \oplus A_0$, for some finite abelian group $A_0$; we indicate this in the table as `$\Z^2\oplus$~finite'.

Given a finite presentation $P=\pres{X}{R}$, the \em deficiency \em of $P$, $\mathrm{def}(P)=|X|-|R|$. The \em deficiency \em of a group $G$, $\mathrm{def}(G)$, is defined to be the maximum of the deficiencies of all finite presentations of $G$. An aspherical presentation $P$ of any given group $G$ has $\mathrm{def}(P)=\mathrm{def}(G)$~\cite[page~478]{Trotter}. The aspherical presentations~(\ref{eq:GammaPres}) were classified in~\cite{EdjvetWilliams}, so groups defined by these presentations have deficiency zero. (Here, by an \em aspherical \em presentation, we mean one whose presentation complex has trivial second homotopy group.) Since the deficiency will be a useful group invariant to us, our third change is to replace the `aspherical' column of~\cite[Table~1]{EdjvetWilliams} by the `$\mathrm{def}(\Gamma)$' column in Table~\ref{tab:EdjvetWilliams}. Since groups defined by aspherical presentations in which no relator is a proper power are torsion-free (\cite{Huebschmann}) and (by~\cite{EdjvetWilliams}) the presentation $P_n(k,l)$ is aspherical when none of (A),(B),(C) hold, we also add `torsion-free' to entry~1 of Table~\ref{tab:EdjvetWilliams}.

In order to study isomorphism classes of groups $\Gamma_n(k,l)$, for each $n\geq 3$ we introduce the sets
\[S(n)=\{\Gamma_n(k,l)\ |\ 1\leq k,l< n, k\not=l, (n,k,l)=1\}\]
(where isomorphic groups are identified) and we let $f(n)=|S(n)|$. For example
\[S(4)=\{\Gamma_4(1,2), \Gamma_4(1,3), \Gamma_4(2,1), \Gamma_4(2,3), \Gamma_4(3,1) ,\Gamma_4(3,2)\}\]
but $\Gamma_4(1,2)\cong \Gamma_4(1,3)\cong \Gamma_4(2,1)\cong \Gamma_4(2,3)\cong \Gamma_4(3,1)\cong\Gamma_4(3,2)\cong\Z_3$ (since in each case (B) holds and neither (A) nor (C) hold) so $S(4)=\{\Gamma_4(1,2)\}=\{\Z_3\}$ and $f(4)=1$. We make extensive use of the following lemma (which is essentially \cite[Lemma~2.1]{EdjvetWilliams}) which establishes isomorphisms amongst the groups $\Gamma_n(k,l)$:

\begin{lemma}[{\cite[Lemma~2.1]{EdjvetWilliams}}]\label{lem:gammaiso}
Let $1\leq k, l< n$ then
\begin{enumerate}
  \item[(i)] $\Gamma_n(k,l) \cong \Gamma_n(l-k,-k)$;
  \item[(ii)] $\Gamma_n(k,l) \cong \Gamma_n(l,k)$;
  \item[(iii)] $\Gamma_n(k,l) \cong \Gamma_n(k-l,-l)$;
  \item[(iv)] $\Gamma_n(k,l) \cong \Gamma_n(k,k-l)$;
  \item[(v)] if $(\psi, n) = 1$ then $\Gamma_n(k,l) \cong \Gamma_n(\psi k , \psi l)$;
\end{enumerate}
(where the bracketed terms are taken mod~$n$).
\end{lemma}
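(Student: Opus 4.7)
For each of the five assertions, the plan is to produce an isomorphism of $F_n$ that carries the defining relator set of one presentation into the normal closure of that of the other. Because the relators of a cyclic presentation are defined only up to cyclic conjugation and inversion, it suffices to match images modulo such operations. Part~(i) needs no nontrivial automorphism: the word $x_i x_{i+k} x_{i+l}$ is cyclically conjugate to $x_{i+k} x_{i+l} x_i$, and the reindexing $j=i+k$ rewrites the latter as $x_j x_{j+(l-k)} x_{j-k}$, which is the defining relator of $\Gamma_n(l-k,-k)$ at index $j$. Part~(v) uses the index-scaling automorphism $\phi(x_i)=x_{\alpha i}$ (subscripts mod~$n$); this is a bijection of the generators precisely when $(\alpha,n)=1$, and it carries the relator of $\Gamma_n(k,l)$ at index $i$ to the relator of $\Gamma_n(\alpha k,\alpha l)$ at index $\alpha i$, which covers all indices as $i$ runs over $\Z_n$.

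For (ii) and (iii) I would apply the inversion automorphism $\psi(x_i)=x_i^{-1}$, which sends $x_i x_{i+k} x_{i+l}$ to $(x_{i+l} x_{i+k} x_i)^{-1}$, a word lying in the same normal closure as $x_{i+l} x_{i+k} x_i$. The three cyclic rotations of this last word, after the analogous reindexing as in (i), are recognised as the defining relators of $\Gamma_n(l,k)$ (giving (ii)), of $\Gamma_n(k-l,-l)$ (giving (iii)), and of $\Gamma_n(-k,l-k)$. Part~(iv) is then obtained by composing previously established isomorphisms, for example $\Gamma_n(k,l)\cong\Gamma_n(k-l,-l)\cong\Gamma_n(l-k,l)\cong\Gamma_n(k,k-l)$, using in turn (iii), (v) with $\alpha=-1$, and (i).

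There is no real conceptual obstacle: the entire argument is direct symbolic verification. The only thing that requires care is the bookkeeping, namely confirming that each reindexing in fact defines a bijection of $\Z_n$. For the additive shifts used in (i)--(iv) this is automatic, while for (v) it is precisely the hypothesis $(\alpha,n)=1$ that is needed, so the five clauses together are essentially a catalogue of the natural symmetries of the cyclic presentation $P_n(k,l)$ under the semidirect product of index permutations and generator inversions.
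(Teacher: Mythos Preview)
Your argument is correct. Each of the five isomorphisms is established by an explicit bijection of generators (the identity, the inversion $x_i\mapsto x_i^{-1}$, or the scaling $x_i\mapsto x_{\alpha i}$) together with a cyclic rotation and reindexing of relators, and your derivation of (iv) from (iii), (v) with $\alpha=-1$, and (i) checks out: $\Gamma_n(k,l)\cong\Gamma_n(k-l,-l)\cong\Gamma_n(l-k,l)\cong\Gamma_n(k,k-l)$.

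Note, however, that the paper does not supply its own proof of this lemma: it is quoted verbatim from \cite[Lemma~2.1]{EdjvetWilliams} and used as a black box. So there is no ``paper's proof'' to compare against here; you have simply reconstructed the elementary verification that the original reference presumably contains. Your write-up would serve perfectly well as a self-contained proof should one be wanted.
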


The case where (D) holds and (B) does not will play an important role in our analysis. In the following lemma we note that this corresponds to the group $\Gamma_n(1,n/2-1)$.

\begin{lemma}\label{lem:FFFT=GN(1,n/2-1)}
Suppose $n\geq 3$, $1\leq k,l< n$, $k\neq l$, $(n,k,l)=1$, and that (D) holds and (B) does not hold. Then $\Gamma_n(k,l)\cong\Gamma_n(1,n/2-1)$.
\end{lemma}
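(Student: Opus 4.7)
Since (B) fails but (D) holds, at least one of $k+l$, $2l-k$, $2k-l$ is congruent to $n/2$ (rather than $0$) mod $n$; in particular $n$ is even. My plan is to apply the transformations of Lemma~\ref{lem:gammaiso} in three stages to bring $(k,l)$ into the form $(1,n/2-1)$.

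\emph{Stage 1.} I would first reduce to the case $k+l\equiv n/2 \pmod n$. If instead $2k-l\equiv n/2 \pmod n$, then $k-l\equiv n/2-k \pmod n$, so Lemma~\ref{lem:gammaiso}(iv) gives $\Gamma_n(k,l)\cong \Gamma_n(k,n/2-k)$, whose parameters sum to $n/2$. If $2l-k\equiv n/2 \pmod n$, I would first apply Lemma~\ref{lem:gammaiso}(ii) to swap the parameters and then repeat the previous reduction. Thus we may henceforth assume $l\equiv n/2-k \pmod n$.

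\emph{Stage 2.} Next I would arrange $k$ to be odd and then deduce $(n,k)=1$. If $n\equiv 0 \pmod 4$, then $n/2$ is even, so $k$ and $l$ have the same parity; they cannot both be even since $(n,k,l)=1$, so both are odd. If $n\equiv 2 \pmod 4$, then $n/2$ is odd, so exactly one of $k,l$ is odd, and applying Lemma~\ref{lem:gammaiso}(ii) (which preserves $k+l\equiv n/2$) I may assume it is $k$. Setting $d=(n,k)$, the relation $l\equiv n/2-k \pmod n$ yields $l\equiv n/2 \pmod d$, while $(n,k,l)=1$ forces $(d,l)=1$; hence $(d,n/2)=1$, and combined with $d\mid n$ this gives $d\mid 2$. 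The case $d=2$ is ruled out because $k$ is odd, so $(n,k)=1$.

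\emph{Stage 3.} Finally I would apply Lemma~\ref{lem:gammaiso}(v) with $\alpha\equiv k^{-1} \pmod n$ to obtain $\Gamma_n(k,n/2-k)\cong \Gamma_n(1,\alpha(n/2-k))=\Gamma_n(1,\alpha n/2-1)$. Since $k$ is odd, $k\cdot n/2\equiv n/2 \pmod n$, and multiplying by $\alpha$ gives $\alpha n/2\equiv n/2 \pmod n$, so $\Gamma_n(1,\alpha n/2-1)=\Gamma_n(1,n/2-1)$, as required. I expect the main obstacle to be the parity and gcd bookkeeping in Stage~2: the whole argument hinges on arranging $k$ odd (so that Lemma~\ref{lem:gammaiso}(v) with $\alpha=k^{-1}$ is both applicable and fixes $n/2$), and verifying that the hypothesis $(n,k,l)=1$ always permits this requires the split on $n \bmod 4$ described above.
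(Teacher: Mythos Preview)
Your proof is correct and follows essentially the same route as the paper's: reduce to $\Gamma_n(\kappa,n/2-\kappa)$ via Lemma~\ref{lem:gammaiso}(ii),(iv), arrange the first parameter to be odd (hence coprime to $n$), and then apply Lemma~\ref{lem:gammaiso}(v). The only cosmetic differences are that the paper obtains oddness of $\kappa$ via the single observation $(n/2,\kappa)=1$ rather than your explicit split on $n\bmod 4$, and the paper leaves the final verification $\alpha\cdot n/2\equiv n/2$ implicit where you spell it out.
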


\begin{proof}
If (D) holds and (B) does not then one of $k+l$, $2l-k$, or $2k-l$ is equivalent to $n/2$ mod~$n$. Therefore $\Gamma_n(k,l)$ is isomorphic to one of $\Gamma_n(k,n/2-k)$, $\Gamma_n(2l+n/2,l)$, or $\Gamma_n(k,n/2+2k)$ and since $\Gamma_n(2l+n/2,l)\cong \Gamma_n(l,2l+n/2)$ and $\Gamma_n(k,2k+n/2)\cong \Gamma_n(k,n/2-k)$ by parts (ii) and (iv) of Lemma~\ref{lem:gammaiso} it suffices to consider the group $\Gamma_n(k,n/2-k)$. Again by part (ii) of Lemma~\ref{lem:gammaiso} we have $\Gamma_n(k,n/2-k)\cong \Gamma_n(n/2-k,k)=\Gamma_n(k',n/2-k')$ where $k'=n/2-k$. The condition $(n,k,l)=1$ implies that $(n/2,k)=1$ so if $k$ is even then $k'$ is odd, so without loss of generality we may assume that $k$ is odd. Since $(n,k)=1$ the result then follows from Lemma~\ref{lem:gammaiso}(v).
\end{proof}

\section{Small cancellation, SQ-universality and hyperbolicity}\label{sec:smallcancSQ}

Let $P_n(k,l)$ denote the cyclic presentation~(\ref{eq:GammaPres}). The following theorem and corollary (which follows from~\cite{EdjvetHowie}) were proved in~\cite{EdjvetWilliams}:

\begin{theorem}[{\cite[Lemma~5.1]{EdjvetWilliams}}]\label{thm:C3T6}
Let $n\geq 3$, $1\leq k,l< n$, $k\neq l$ and $(n,k,l)=1$. Then the presentation $P_n(k,l)$ satisfies the small cancellation condition C(3)-T(6) if and only if none of (B),(C),(D) hold.
\end{theorem}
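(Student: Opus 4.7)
The plan is to treat C(3) and T(6) separately and match each side of the ``iff'' to the conditions (B), (C), (D). For C(3), observe that the symmetrized closure of the relators $R_i = x_i x_{i+k} x_{i+l}$ consists of positive rotations together with their inverses, so any length-$2$ piece is either positive-positive or negative-negative; by symmetry it suffices to analyse the former. The length-$2$ subwords appearing in some rotation of some $R_i$ are exactly the $x_a x_b$ with $b - a \in D := \{k,\, l-k,\, -l\} \pmod n$, and for each $d \in D$ the equation $b - a = d$ determines a unique relator and rotation producing the subword. Hence a length-$2$ piece exists iff two elements of $D$ coincide mod~$n$, and the three coincidences $k \equiv l-k$, $k \equiv -l$, $l-k \equiv -l$ unpack to the clauses $2k-l \equiv 0$, $k+l \equiv 0$, $2l-k \equiv 0 \pmod n$ of (B). Thus $\mathrm{C}(3) \Leftrightarrow \neg\mathrm{(B)}$, and from now on I assume $|D| = 3$ mod $n$.

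For T(6), I would work via the star (Whitehead) graph $\Gamma^*$: a bipartite graph on the vertex set $\{x_i^{\pm 1} : 0 \le i < n\}$ with edges $x_i^{-1} \sim x_{i+d}$ ($d \in D$), each carrying the label of the unique relator-rotation producing the corresponding subword. Standard small cancellation theory (cf.\ Lyndon--Schupp) identifies interior vertices of degree $m$ in reduced Van Kampen diagrams over $P_n(k,l)$ with reduced closed walks of length $m$ in $\Gamma^*$. Since $\Gamma^*$ is bipartite, reduced closed walks have even length, and the assumption $|D|=3$ makes $\Gamma^*$ simple, so a reduced closed walk of length~$4$ is the same as a simple $4$-cycle. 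Hence T(6) fails iff $\Gamma^*$ has a $4$-cycle.

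Writing such a cycle as $x_a^{-1} \to x_{a+d_1} \to x_{a+d_1-d_2}^{-1} \to x_{a+d_1-d_2+d_3} \to x_a^{-1}$ with $d_j \in D$, the closure condition is $d_1 + d_3 \equiv d_2 + d_4 \pmod n$. The six pair-sums $d + d'$ with $d,d' \in D$ are
\[ 2k,\ l,\ k-l,\ 2(l-k),\ -k,\ -2l \pmod n. \]
If the multisets $\{d_1,d_3\}$ and $\{d_2,d_4\}$ agree, the walk has a repeated vertex flanked on both sides by the same edge (as $\Gamma^*$ is simple), which violates reducedness. Otherwise two of the six pair-sums coincide mod~$n$; a direct enumeration of the $\binom{6}{2}=15$ equalities shows that each such coincidence simplifies to a clause of (B), (C), or (D), and that every clause of (B), (C), (D) arises in this way. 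Under the standing assumption $\neg\mathrm{(B)}$, the non-trivial $4$-cycles therefore exist exactly when $\mathrm{(C)} \vee \mathrm{(D)}$ holds.

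To finish, for each clause of (C) or (D) I would verify that the associated pair-sum coincidence really does produce a concrete reduced $4$-cycle in $\Gamma^*$, i.e.\ that the four star-graph edges come from four distinct relators, giving a genuine reduced interior vertex of degree~$4$ and hence a T(6) violation. Combined with the C(3) analysis, this yields $\mathrm{C}(3)$-$\mathrm{T}(6) \Leftrightarrow \neg\mathrm{(B)} \wedge \neg\mathrm{(C)} \wedge \neg\mathrm{(D)}$. The main obstacle is the bookkeeping in the pair-sum enumeration: classifying each of the $15$ equalities, separating the trivial (degenerate) ones from those corresponding to clauses of (B)/(C)/(D), and checking reducedness of the walks produced by the non-degenerate coincidences.
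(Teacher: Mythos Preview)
Your approach is correct and is essentially the star-graph method that the paper invokes. Note, however, that this paper does \emph{not} supply its own proof of Theorem~\ref{thm:C3T6}: the result is quoted verbatim from \cite[Lemma~5.1]{EdjvetWilliams}, and the surrounding text merely records the Hill--Pride--Vella characterisation (that $P$ is C(3)-T($q$) iff the star graph has no cycle of length $<q$) and the explicit description of $\Lambda_n(k,l)$. Your argument reconstructs what the original Edjvet--Williams proof presumably does, with one cosmetic difference: you separate C(3) (via pieces) from T(6) (via reduced closed walks), whereas the Hill--Pride--Vella criterion handles both at once --- girth $\geq 6$ already forces $\Lambda$ to be simple, which is exactly your $|D|=3$ condition, i.e.\ $\neg$(B). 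The piece analysis and the ``no $2$-cycles'' analysis are the same computation.

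The pair-sum enumeration is sound: the six sums $d+d'$ over $D=\{k,l-k,-l\}$ are as you list, and checking the $15$ coincidences one finds that each reduces to a clause of (B), (C), or (D), with every clause of (C) and (D) realised (e.g.\ $2k\equiv -k$ gives $3k\equiv 0$; $2k\equiv -2l$ gives $2(k+l)\equiv 0$; and so on). Under $\neg$(B) the degenerate multisets are excluded exactly as you say, so the remaining coincidences yield genuine simple $4$-cycles. Your closing caveat about verifying reducedness in each case is the right thing to flag, but under $\neg$(B) the consecutive-edge condition $d_j\ne d_{j+1}$ is automatic for the (C)/(D) configurations, so the bookkeeping is short.
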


\begin{corollary}[{\cite[Corollary~5.2]{EdjvetWilliams}}]\label{cor:freesubgroups}
Let $n\geq 3$, $1\leq k,l< n$, $k\neq l$ and $(n,k,l)=1$. If none of (B),(C),(D) hold then $\Gamma_n(k,l)$ contains a non-abelian free subgroup.
\end{corollary}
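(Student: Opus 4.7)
The plan is simply to combine the preceding theorem with a result from the small cancellation literature. Theorem~\ref{thm:C3T6} gives that $P_n(k,l)$ satisfies C(3)-T(6) whenever none of (B), (C), (D) hold, so the corollary reduces to deducing the existence of a non-abelian free subgroup in any group defined by such a presentation. This is exactly the thrust of the main theorem of Edjvet and Howie~\cite{EdjvetHowie}, which asserts that every C(3)-T(6) group either contains a non-abelian free subgroup or appears on a short explicit list of exceptional ``small'' groups; the task is therefore to verify that $\Gamma_n(k,l)$ is not one of those exceptions.

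The remaining work is a combinatorial inspection. Under the standing hypotheses $n\geq 3$, $1\leq k,l<n$, $k\neq l$, $(n,k,l)=1$, together with the failure of (B), (C), (D), the three subscripts $0,k,l$ appearing in each relator $x_i x_{i+k} x_{i+l}$ are pairwise distinct mod~$n$ (failure of (B) in particular rules out $k+l\equiv 0$), so the relators are genuine length-$3$ positive words on three distinct generators. The presentation has $n\geq 3$ such relators, forming a full $\theta$-orbit of size $n$ over the $n$ generators $x_0,\ldots,x_{n-1}$, so the presentation complex is considerably richer than anything on the exception list of~\cite{EdjvetHowie}, which consists of presentations with very few or very short relators.

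Accordingly I would write the proof as a single short paragraph: invoke Theorem~\ref{thm:C3T6} to obtain C(3)-T(6), invoke the Edjvet--Howie theorem to obtain the dichotomy ``exceptional or contains a non-abelian free subgroup,'' and dispose of the exceptional side by the observations in the previous paragraph. The main (and only) obstacle is the last exception-matching step; it involves no further group-theoretic input beyond the combinatorics of $P_n(k,l)$ and the classification in~\cite{EdjvetHowie}, so no serious difficulty is expected.
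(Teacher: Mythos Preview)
Your proposal is correct and matches the paper's approach: the paper states the corollary without proof, noting only that it ``follows from~\cite{EdjvetHowie}'' once Theorem~\ref{thm:C3T6} supplies the C(3)-T(6) condition, which is precisely the line you take. Your exception-matching paragraph is a bit informal, but the overall structure is exactly what the paper (and the original~\cite{EdjvetWilliams}) intends.
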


We now record some other consequences which, as in the proof of \cite[Corollary~11]{HowieWilliams} (which concerns the groups $G_n(m,k)$), follow from~\cite[Chapter~V, Theorems 6.3 and 7.6]{LyndonSchupp}, \cite[Section~3]{ElMosalamyPride}), \cite[Theorem~2]{GerstenShortII}.

\begin{corollary}\label{cor:C3T6properties}
Let $n\geq 3$, $1\leq k,l< n$, $k\neq l$ and $(n,k,l)=1$. If none of (B),(C),(D) hold then $\Gamma_n(k,l)$ has solvable word and conjugacy
problems, is automatic, and acts properly and cocompactly on a CAT(0) space.
\end{corollary}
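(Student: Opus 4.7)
The plan is to reduce the statement to the standard body of results about small cancellation presentations, exactly as in the analogous \cite[Corollary~11]{HowieWilliams} for the family $G_n(m,k)$. The input is Theorem~\ref{thm:C3T6}: since none of (B),(C),(D) hold, the finite presentation $P_n(k,l)$ satisfies C(3)-T(6). From this point onwards the group $\Gamma_n(k,l)$ inherits each of the three listed properties from a known general theorem about C(3)-T(6) presentations.

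First I would deduce solvability of the word and conjugacy problems by quoting \cite[Chapter~V, Theorems 6.3 and 7.6]{LyndonSchupp}, which provide Dehn-type algorithms for the word and conjugacy problems in any finitely presented group whose defining presentation satisfies a non-metric small cancellation condition of the form C(3)-T(6). Next I would obtain automaticity by applying the main result of \cite[Section~3]{ElMosalamyPride}, which shows that a group defined by a finite C(3)-T(6) presentation is automatic. Finally, the CAT(0) assertion follows from \cite[Theorem~2]{GerstenShortII}, which constructs from a finite C(3)-T(6) presentation a proper, cocompact action of the corresponding group on a CAT(0) piecewise Euclidean 2-complex.

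Since the C(3)-T(6) hypothesis is precisely what each of these three theorems requires, no real obstacle remains once Theorem~\ref{thm:C3T6} has been invoked; one only needs to observe that $P_n(k,l)$ is a finite presentation and that the three cited theorems apply to any such presentation satisfying C(3)-T(6), independently of the particular structure of its relators. The only nontrivial ingredient is the small cancellation condition itself, and that has already been established in~\cite{EdjvetWilliams}.
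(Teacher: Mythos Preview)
Your proposal is correct and matches the paper's own argument essentially verbatim: the paper simply records that the corollary follows, as in the proof of \cite[Corollary~11]{HowieWilliams}, from Theorem~\ref{thm:C3T6} together with \cite[Chapter~V, Theorems~6.3 and~7.6]{LyndonSchupp}, \cite[Section~3]{ElMosalamyPride}, and \cite[Theorem~2]{GerstenShortII}. The only minor point is that the paper does not pair each citation with a specific conclusion as explicitly as you do, so you may wish to double-check that your one-to-one attribution (in particular, automaticity to \cite{ElMosalamyPride} rather than to \cite{GerstenShortII}) is accurate before committing to it.
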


Stronger conclusions could be drawn if the presentation $P_n(k,l)$ were to satisfy C(3)-T(7). However, we now show that C(3)-T(6) is the strongest of the standard non-metric small cancellation conditions that $P_n(k,l)$ can satisfy. (This is in contrast to the situation for the groups $G_n(m,k)$ where C($3$)-T($7$) can be satisfied~\cite[Theorem~10]{HowieWilliams}.) As in~\cite{EdjvetWilliams} we use the characterisation~\cite{HillPrideVella} of the C($3$)-T($q$) conditions in terms of the \em star graph \em of a presentation $\pres{X}{R}$. (The star graph is also called the \em co-initial graph \em or \em star complex.\em) This is the graph $\Lambda$ with vertex set $X\cup X^{-1}$ and with an edge from $x$ to $y$ for each distinct word of the form $x^{-1}yu$ ($x\neq y$) that is a cyclic permutation of a relator or its inverse~\cite[page~61]{LyndonSchupp}. (The edges occur in inverse pairs.) By~\cite{HillPrideVella} a presentation $\pres{X}{R}$ in which each relator has length at least 3 satisfies C($3$)-T($q$) ($q > 4$) if and only if $\Lambda$ has no cycle of length less than $q$. (See also~\cite{ElMosalamyPride} for further explanation.)
The star graph $\Lambda_n(k,l)$ of the presentation $\Gamma_n(k,l)$ is the (bipartite) graph with vertices $x_i,x_i^{-1}$ and edges $(x_i,x_{i+k}^{-1})$, $(x_i,x_{i+l-k}^{-1})$, $(x_i,x_{i-l}^{-1})$ ($0\leq i< n$, subscripts mod~$n$).

\begin{theorem}\label{thm:neverC3T7}
Let $n\geq 3$, $1\leq k,l< n$, $k\neq l$ and $(n,k,l)=1$. Then the presentation $P_n(k,l)$ is not C($p$)-T($q$) for any $p>3$ or $q>6$.
\end{theorem}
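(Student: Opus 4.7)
The plan is to handle the $C(p)$ and $T(q)$ parts separately. For $p>3$: each relator $x_i x_{i+k} x_{i+l}$ has length exactly $3$ and every piece has length at least $1$, so no relator can be written as a product of four or more pieces. Thus $P_n(k,l)$ fails $C(4)$, and consequently fails $C(p)$-$T(q)$ for every $p>3$ and every $q$.

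For $q>6$: by the Hill--Pride--Vella characterisation quoted in the excerpt, it suffices to exhibit a reduced closed cycle of length at most $6$ in the star graph $\Lambda_n(k,l)$. I would split into cases on whether (B) holds. If (B) holds, then two of the three shifts $k$, $l-k$, $-l$ (which index the edges leaving each vertex $x_i$) coincide modulo $n$, producing a pair of parallel edges in $\Lambda_n(k,l)$, i.e.\ a digon (2-cycle).

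If (B) fails, I would exhibit a genuine $6$-cycle. The guiding observation is that $k+(l-k)+(-l)=0$, so any closed walk in $\Lambda_n(k,l)$ that uses each of the three shifts once ``forwards'' (from a positive to a negative vertex) and once ``backwards'' has zero net displacement and therefore closes. A concrete ordering that avoids retracing an edge is
\[ x_0 \longrightarrow x_k^{-1} \longrightarrow x_{2k-l} \longrightarrow x_{2k-2l}^{-1} \longrightarrow x_{k-2l} \longrightarrow x_{-l}^{-1} \longrightarrow x_0. \]
The routine verification is that each edge is of one of the three types $(x_i,x_{i+k}^{-1})$, $(x_i,x_{i+l-k}^{-1})$, $(x_i,x_{i-l}^{-1})$, and that the three positive and three negative vertices listed are pairwise distinct. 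The six resulting non-degeneracy conditions each collapse to one of $k+l\not\equiv 0$, $2k-l\not\equiv 0$, $2l-k\not\equiv 0 \pmod n$, which is precisely the failure of (B). So the cycle is genuine and reduced, forcing the girth of $\Lambda_n(k,l)$ to be at most $6$ and hence $T(7)$ (and $T(q)$ for all $q>6$) to fail.

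The main obstacle I anticipate is finding the correct cyclic ordering of the shifts for the $6$-cycle in the case when (B) fails: the most symmetric-looking choices (for instance pairing each forward shift immediately with the same shift backwards) collapse to shorter degenerate walks, and one has to pick the ordering so that the non-degeneracy conditions match exactly the three inequalities that constitute the failure of (B), rather than a stronger condition that would leave some sub-case of the theorem unproved.
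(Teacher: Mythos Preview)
Your approach is essentially the same as the paper's: the paper also disposes of $C(p)$ for $p>3$ by noting that each generator is a piece, and for $T(q)$ it exhibits exactly the same $6$-cycle $x_0,\,x_k^{-1},\,x_{2k-l},\,x_{2k-2l}^{-1},\,x_{k-2l},\,x_{-l}^{-1},\,x_0$ in the case that (B) fails. Two small remarks. First, your $C(p)$ sentence is phrased the wrong way round: the fact that a length-$3$ word cannot be a product of four or more nonempty pieces does not by itself show $C(4)$ fails (if no proper subword were a piece, $C(p)$ would hold vacuously for all $p$); what you need is the paper's observation that each generator \emph{is} a piece, so each relator \emph{is} a product of three pieces. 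Second, in the (B)-holds case the paper writes down a $4$-cycle $x_0,\,x_k^{-1},\,x_{2k-l},\,x_{2k-2l}^{-1},\,x_0$ rather than your digon; your parallel-edge idea is morally fine, but to make it rigorous you would still have to check that the two coincident shifts arise from genuinely distinct words $x^{-1}yu$ (i.e.\ distinct labelled edges of the star graph), which is an extra verification the $4$-cycle avoids.
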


\begin{proof}
A presentation satisfies the condition C($p$) if and only if none of its relators can be written as a product of fewer than $p$ pieces. Since each generator of $P_n(k,l)$ is a piece and all relators have length~$3$, every relator can be written as a product of 3 pieces, so the presentation $P_n(k,l)$ cannot satisfy C($p$) for $p>3$.

Suppose that $P_n(k,l)$ satisfies T($q$) for some $q\geq 6$. Then by~\cite[Section~5]{PrideDependenceProblem} every piece has length 1 so $P_n(k,l)$ also satisfies C($3$) and therefore satisfies C($3$)-T($6$).
Theorem~\ref{thm:C3T6} then implies that (B) does not hold and so for all $1\leq k,l< n$ the star graph $\Lambda_n(k,l)$ contains the 6-cycle $x_0,x_{k}^{-1},x_{2k-l},x_{2k-2l}^{-1},x_{k-2l},x_{n-l}^{-1},x_0$. Thus, using the C($3$)-T($q$) characterisation above, $q=6$, as required.
\end{proof}

An alternative strengthening of the C(3)-T(6) condition is the C(3)-T(6)-non-special condition, introduced in~\cite{HowieSQ} (see also~\cite{EdjvetVdovina}). With a few known exceptions, groups defined by C(3)-T(6)-non-special presentations are SQ-universal~\cite{HowieSQ}.
A C($3$)-T($6$) presentation is \em special \em if every relator has length 3 and $\Lambda$ is isomorphic to the incidence graph of a finite projective plane (and \em non-special \em otherwise). (In~\cite{EdjvetVdovina} these special presentations are called \em $(3,3)$-special presentations.\em ) As explained in the proof of~\cite[Theorem~2(iii)]{EdjvetVdovina} special presentations are exactly the triangle presentations of~\cite{CMSZ}. Answering~\cite[Question~6.11]{HowieSQ} Theorem~2(iii) of~\cite{EdjvetVdovina} gives that groups defined by C($3$)-T($6$)-special presentations are not SQ-universal.

\begin{theorem}\label{thm:C3T6special}
Let $n\geq 3$, $1\leq k,l< n$, $k\neq l$. Then the presentation $P_n(k,l)$ satisfies the small cancellation condition C(3)-T(6)-special if and only if $n=7$ and none of (B),(C),(D) hold.
\end{theorem}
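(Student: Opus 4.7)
The plan is to handle the two directions of the biconditional separately, reducing each to a standard fact: Theorem~\ref{thm:C3T6} on one side, and the uniqueness of the Heawood graph as the $(3,6)$-cage on the other.

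For the forward direction, I would assume $P_n(k,l)$ is C($3$)-T($6$)-special and extract two consequences. First, being C($3$)-T($6$) forces, by Theorem~\ref{thm:C3T6}, that none of (B),(C),(D) hold. Second, by the definition of \emph{special}, $\Lambda_n(k,l)$ is isomorphic to the incidence graph of some finite projective plane of order $q$, which is $(q+1)$-regular with $2(q^2+q+1)$ vertices. But $\Lambda_n(k,l)$ is $3$-regular (each relator $x_ix_{i+k}x_{i+l}$ contributes one edge at each of its three letters, giving three edges incident to each vertex) and has exactly $2n$ vertices. Matching degree and order forces $q+1=3$ and $2n=2(q^2+q+1)$, i.e.\ $q=2$ and $n=7$.

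For the backward direction, I would assume $n=7$ and that none of (B),(C),(D) hold. Theorem~\ref{thm:C3T6} then gives that $P_7(k,l)$ satisfies C($3$)-T($6$), so by the star-graph characterisation of the T($q$) conditions used in the proof of Theorem~\ref{thm:neverC3T7}, the graph $\Lambda_7(k,l)$ has no cycle of length less than $6$. Being $3$-regular, bipartite, and on $2\cdot 7 = 14$ vertices with girth at least $6$, $\Lambda_7(k,l)$ saturates the Moore bound for $3$-regular graphs of girth $6$ and so is a $(3,6)$-cage. Since the Heawood graph is the unique such cage, and is the incidence graph of the Fano plane (the projective plane of order $2$), $\Lambda_7(k,l)$ is isomorphic to the incidence graph of a finite projective plane, so $P_7(k,l)$ is C($3$)-T($6$)-special.

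The only non-routine ingredient I would need is the classical uniqueness of the $(3,6)$-cage; everything else follows from Theorem~\ref{thm:C3T6}, the definition of \emph{special}, and elementary degree/vertex counts on $\Lambda_n(k,l)$. I do not foresee any serious obstacle, and the proof should be short.
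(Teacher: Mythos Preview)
Your argument is essentially correct, but there is one small ordering issue in the forward direction. You invoke Theorem~\ref{thm:C3T6} to deduce that none of (B),(C),(D) hold \emph{before} you know $n=7$; however, Theorem~\ref{thm:C3T6} carries the hypothesis $(n,k,l)=1$, which is not assumed in the present statement. The paper's proof avoids this by reversing the order: it first extracts $n=7$ from the projective-plane structure (exactly as you do), observes that $7$ is prime so $(n,k,l)=1$ is automatic, and only then applies Theorem~\ref{thm:C3T6}. Your two steps simply need to be swapped.

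For the backward direction you take a genuinely different route from the paper. The paper just says it is ``straightforward to check'' that $\Lambda_7(k,l)$ is the Heawood graph, i.e.\ it relies on a direct verification over the finitely many admissible $(k,l)$. Your argument is more conceptual: C(3)-T(6) gives girth at least~$6$, the graph is cubic on $14$ vertices, so it meets the Moore bound and must be the unique $(3,6)$-cage, namely the Heawood graph. This is a cleaner, uniform argument that avoids case-checking and imports only the classical uniqueness of the $(3,6)$-cage; the paper's approach is more hands-on but entirely self-contained. Either is fine. One small point: your claim that $\Lambda_7(k,l)$ is $3$-regular uses that the three neighbours $x_{i+k}^{-1},x_{i+l-k}^{-1},x_{i-l}^{-1}$ are distinct, which is exactly the failure of condition~(B); you might make that explicit.
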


\begin{proof}
If $P_n(k,l)$ satisfies the small cancellation condition C(3)-T(6)-special then the star graph $\Lambda$ is the incidence graph of a projective plane $P$. Then since the vertices of $\Lambda$ have degree~3 it follows that $P$ has order 2 so has 7 points and 7 lines (see for example~\cite[Section~16.7 and Exercise~16.10.10]{Biggs}) so~$\Lambda$ has precisely $14$ vertices and hence $n=7$. Thus $(n,k,l)=1$, so $\Gamma_n(k,l)$ is C(3)-T(6)-special if and only if none of (B),(C),(D) hold, by Theorem~\ref{thm:C3T6}.

If $n=7$ and none of (B),(C),(D) hold then it is straightforward to check that $\Lambda$ is the Heawood graph (which is the incidence graph of the 7-point projective plane).
\end{proof}

\begin{corollary}\label{cor:-FFFSQuniversal}
Let $n\geq 3$, $1\leq k,l< n$, $k\neq l$ and $(n,k,l)=1$ and suppose that none of (B),(C),(D) hold. Then $\Gamma_n(k,l)$ is SQ-universal if and only if $n\neq 7$
\end{corollary}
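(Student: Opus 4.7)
The proof should be a direct consequence of the C(3)-T(6)-special/non-special dichotomy established by Theorems \ref{thm:C3T6} and \ref{thm:C3T6special}, together with the two external results the paper has already cited for each side of that dichotomy, namely \cite{HowieSQ} (non-special implies SQ-universal) and \cite[Theorem~2(iii)]{EdjvetVdovina} (special implies not SQ-universal).

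For the ``only if'' direction, I would assume $n=7$ and invoke Theorem \ref{thm:C3T6special} (whose hypothesis is precisely that none of (B), (C), (D) hold together with $n=7$) to conclude that $P_n(k,l)$ is a C(3)-T(6)-special presentation. Applying \cite[Theorem~2(iii)]{EdjvetVdovina} then immediately gives that $\Gamma_n(k,l)$ is not SQ-universal, contradicting SQ-universality; so SQ-universality forces $n\neq 7$.

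For the ``if'' direction, I would assume $n\neq 7$. By Theorem \ref{thm:C3T6}, the hypothesis that none of (B), (C), (D) hold gives that $P_n(k,l)$ satisfies C(3)-T(6), and by Theorem \ref{thm:C3T6special} the failure of $n=7$ upgrades this to C(3)-T(6)-non-special. The main SQ-universality theorem of \cite{HowieSQ} then applies, yielding SQ-universality of $\Gamma_n(k,l)$ modulo checking that the group is not on Howie's short list of exceptions.

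Checking against that list is the only real obstacle. The exceptional cases in \cite{HowieSQ} are very restrictive (essentially involving groups that are virtually abelian or otherwise fail to contain nonabelian free subgroups), and Corollary \ref{cor:freesubgroups} already guarantees that every $\Gamma_n(k,l)$ in our regime contains a nonabelian free subgroup. Cross-referencing the precise statement in \cite{HowieSQ} with this fact should dispose of the exceptions uniformly, so no case analysis on $n$, $k$, $l$ is needed beyond the identification of the $n=7$ case that was already handled by Theorem \ref{thm:C3T6special}.
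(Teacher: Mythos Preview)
Your proposal is correct and follows essentially the same route as the paper: split via Theorem~\ref{thm:C3T6special} into the special case $n=7$ (handled by \cite[Theorem~2(iii)]{EdjvetVdovina}) and the non-special case $n\neq 7$ (handled by \cite{HowieSQ}). The paper's proof is in fact terser than yours---it simply cites \cite{HowieSQ} for the non-special direction without explicitly addressing the exceptional cases---so your extra care in invoking Corollary~\ref{cor:freesubgroups} to rule out Howie's exceptions is a reasonable elaboration rather than a departure.
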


\begin{proof}
If the presentation is C(3)-T(6)-non-special then $\Gamma_n(k,l)$ is SQ-\linebreak universal by~\cite{HowieSQ}; if it is C(3)-T(6)-special then it is not SQ-universal by~\cite[Theorem~2(iii)]{EdjvetVdovina}.
\end{proof}

If it could be shown that the C(3)-T(6) groups of Corollary~\ref{cor:-FFFSQuniversal} are hyperbolic (and hence non-elementary hyperbolic, by Corollary~\ref{cor:freesubgroups}) SQ-universality would also follow from~\cite{Olshanskii},\cite{Delzant}. We are grateful to an anonymous referee for providing the following result which shows that, for all but finitely many~$n$, the C(3)-T(6) groups $\Gamma_n(k,l)$ are hyperbolic.

\begin{theorem}\label{thm:allbutfinitelyhyperbolic}
Let $k,l$ be fixed integers with $k\neq 0$, $l\neq 0$, $k\neq l$, $k+l\neq 0$, $2l-k\neq 0$, $2k-l\neq 0$. Then there exists $N\in \mathbb{N}$ such that for all $n\geq N$ with $(n,k,l)=1$ none of (B),(C),(D) hold and $\Gamma_n(k,l)$ is hyperbolic.
\end{theorem}

\begin{proof}
Note first that the hypotheses imply that for sufficiently large $n$ we have $B(n,k,l)=C(n,k,l)=D(n,k,l)=F$. Now either Theorem~3 or Theorem~4 of~\cite{IvanovSchupp} imply that the one-relator group $E(k,l)=\pres{x,t}{xt^kxt^{l-k}xt^{-l}}$ is hyperbolic, and it follows from~\cite[Th\'{e}or\`{e}me~I]{Delzant} or~\cite[Theorem~3]{OlshanskiiIJAC93} that there exists an $N\in \N$ such that for all $n\geq N$ the quotient $E_n(k,l)=\pres{x,t}{t^n,xt^kxt^{l-k}xt^{-l}}$ is hyperbolic. The kernel of the epimorphism \linebreak $E_n(k,l)\rightarrow \pres{t}{t^n}$ given by $t\mapsto t$, $x\mapsto 1$ is the group $\Gamma_n(k,l)$. Thus, since it is a finite index subgroup of $E_n(k,l)$, the group $\Gamma_n(k,l)$ is hyperbolic, as required.
\end{proof}

We now consider the case $n=7$ and none of (B),(C),(D) holds. It follows from Lemma~\ref{lem:gammaiso} that $\Gamma_n(k,l)\cong \Gamma_7(1,3)$. The following example demonstrates that not all C(3)-T(6) groups $\Gamma_n(k,l)$ are hyperbolic.

\begin{example}[The group $\Gamma_7(1,3)$]\label{ex:G_7(1,3)}
\em The group $\Gamma=\Gamma_7(1,3)$ appeared in~\cite[Example~3.3]{EdjvetHowie} as the group $G_3$ (and later in~\cite[Example~6.3]{HowieSQ}) as an example of one with a C(3)-T(6)-special (cyclic) presentation. The proof of~\cite[Theorem~2(iii)]{EdjvetVdovina} shows that groups defined by C($3$)-T($6$)-special presentations are \em just-infinite \em (i.e.\,they are infinite but all their proper quotients are finite) and hence $\Gamma$ is not SQ-universal. Further, $\Gamma$ contains a non-abelian free subgroup (by Corollary~\ref{cor:freesubgroups}) so if it is hyperbolic then it is non-elementary hyperbolic. But by~\cite{Olshanskii},\cite{Delzant} non-elementary hyperbolic groups are SQ-universal, so $\Gamma$ is not hyperbolic. The group $\Gamma$ also appears in~\cite[Section~4]{CMSZ} and as the group $A.1$ in~\cite[Section~4]{CMSZ2} in connection with buildings. In~\cite[Section~3]{BarkerBostonPeyerimhoffVdovina} it is observed that $\Gamma$ acts via covering transformations on a thick Euclidean building of type $\tilde{A}_2$, and it was used to construct the first known infinite family of groups with mixed Beauville structures. \em
\end{example}

Returning to the properties of SQ-universality and the existence of non-abelian free subgroups, from Table~\ref{tab:EdjvetWilliams}, Corollary~\ref{cor:freesubgroups} and Corollary~\ref{cor:-FFFSQuniversal}, we see that the only case where these remain unresolved is the case when none of (A),(B),(C) hold, but (D) does hold. By Lemma~\ref{lem:FFFT=GN(1,n/2-1)}, and as was observed without proof in~\cite[page~774]{EdjvetWilliams}, this is the case $(n,6)=2$ and $\Gamma_n(k,l)\cong\Gamma_n(1,n/2-1)$. We therefore pose the following question:

\begin{question}\label{q:IsG_n(1,n/2-1)SQfree}
\em Suppose $n\geq 8$ and $(n,6)=2$.
\begin{itemize}
  \item[(a)] Is $\Gamma_n(1,n/2-1)$ SQ-universal?
  \item[(b)] Does $\Gamma_n(1,n/2-1)$ contain a non-abelian free subgroup?\em
\end{itemize}
\end{question}

\begin{example}[The group $\Gamma_8(1,3)$]\label{ex:G_8(1,3)}
\em As explained in~\cite[Example~3]{BogleyWilliams17} for $n=8$ the group $\Gamma=\Gamma_n(1,n/2-1)=\Gamma_8(1,3)$ contains the direct product of two copies of the free group of rank~4 as an index~9 subgroup, so it is large, and hence SQ-universal.
The group $\Gamma$ therefore contains $F_2\oplus F_2$ as a subgroup so by \cite{BigdelyWise},\cite[Theorem~9.3.1]{BigdelyThesis} there is no C(3)-T(6) or C(6) presentation of $\Gamma$ and since it contains $\Z\oplus \Z$ it is not hyperbolic. Further, we have $\Gamma_8(1,3)^\mathrm{ab}\cong \Z_3\oplus \Z_3\oplus \Z_3$.
(We note that in~\cite[page~774]{EdjvetWilliams} the index~9 subgroup mentioned above was erroneously stated to be the free abelian group of rank~8.)
\end{example}

For a group $G$ we write $d(G)$ to denote the minimum number of generators of $G$. As a corollary to Example~\ref{ex:G_8(1,3)} we have the following:

\begin{corollary}\label{cor:Gamman1n/2-1n=8mod16}
Suppose $(n,16)=8$. Then
\begin{itemize}
  \item[(a)] $\Gamma_n(1,n/2-1)$ is large, and hence SQ-universal; and
  \item[(b)] $d(\Gamma_n(1,n/2-1)^\mathrm{ab})\geq 3$.
\end{itemize}
\end{corollary}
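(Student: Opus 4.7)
The plan is to construct a surjection $\Gamma_n(1,n/2-1)\twoheadrightarrow\Gamma_8(1,3)$ and pull both conclusions back from Example~\ref{ex:G_8(1,3)}. The hypothesis $(n,16)=8$ is equivalent to writing $n=8m$ with $m$ odd, so in particular $8\mid n$. The intended map sends $x_i\mapsto x_{i\bmod 8}$, induced by the natural surjection $\Z_n\twoheadrightarrow\Z_8$ on indices; once well-definedness is established, the map is clearly onto.

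The one computation to perform is checking that each relator of $P_n(1,n/2-1)$ is sent to a relator of $P_8(1,3)$. Since $m$ is odd we have $4m\equiv 4\pmod 8$, hence $n/2-1=4m-1\equiv 3\pmod 8$, so $x_ix_{i+1}x_{i+n/2-1}$ maps to $x_{\bar\imath}x_{\overline{i+1}}x_{\overline{i+3}}$, a relator of $P_8(1,3)$ (bars denote reduction mod $8$). This is precisely the step where the hypothesis $(n,16)=8$ (rather than merely $8\mid n$) is essential: if $16\mid n$ then $n/2-1\equiv 7\pmod 8$, and the image would instead be a relator of $\Gamma_8(1,7)$, whose structure is not covered by the excerpt.

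For part~(a), Example~\ref{ex:G_8(1,3)} supplies a finite-index subgroup of $\Gamma_8(1,3)$ (namely, an index-$9$ copy of $F_4\oplus F_4$) that maps onto a non-abelian free group. Its preimage in $\Gamma_n(1,n/2-1)$ under the surjection has the same finite index and retains the free quotient, so $\Gamma_n(1,n/2-1)$ is large; SQ-universality is then immediate from~\cite{BaumslagPride}. For part~(b), abelianising the surjection gives $\Gamma_n(1,n/2-1)^{\mathrm{ab}}\twoheadrightarrow\Gamma_8(1,3)^{\mathrm{ab}}\cong \Z_3\oplus\Z_3\oplus\Z_3$, and since $d$ cannot increase under a surjection of abelian groups we conclude $d(\Gamma_n(1,n/2-1)^{\mathrm{ab}})\geq 3$.

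I do not anticipate any real obstacle: the entire argument rests on the mod-$8$ congruence above, and once that is in place the rest is a routine application of the two standard facts that largeness and generator-rank of the abelianisation descend to quotients (and lift, in the case of largeness, to finite-index overgroups).
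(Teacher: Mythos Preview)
Your proposal is correct and follows essentially the same approach as the paper: both arguments produce the surjection $\Gamma_n(1,n/2-1)\twoheadrightarrow\Gamma_8(1,3)$ by reducing indices modulo~$8$ (the paper phrases this as adding the relations $x_i=x_{i+8}$ to a presentation with $n=8+16t$), and then read off both conclusions from the known properties of $\Gamma_8(1,3)$ recorded in Example~\ref{ex:G_8(1,3)}. Your write-up is slightly more explicit about why largeness lifts along the surjection, and your remark on the failure when $16\mid n$ is a helpful aside, but the underlying argument is the same.
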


\begin{proof}
Here $n=8+16t$ for some $t\geq 0$ and so
\[\Gamma_n(1,n/2-1)=\pres{x_0,\ldots, x_{7+16t}}{x_ix_{i+1}x_{i+3+8t}\ (0\leq i\leq 7+16t)}\\
\]
which maps onto
\begin{alignat*}{1}
Q&=\pres{x_0,\ldots, x_{7+16t}}{x_ix_{i+1}x_{i+3+8t},x_i=x_{i+8}\ (0\leq i\leq 7+16t)}\\
&=\pres{x_0,\ldots, x_7}{x_ix_{i+1}x_{i+3}\ (0\leq i\leq 7)}\\
&=\Gamma_8(1,3),
\end{alignat*}
and since $\Gamma_8(1,3)$ is large and $d(\Gamma_8(1,3)^\mathrm{ab})=3$ the result follows.
\end{proof}

\begin{example}[The group $\Gamma_{20}(1,9)$]\label{ex:G_20(1,9)}
\em Button reports~\cite{ButtonEmail} that the Magma routines described in~\cite{ButtonLargeComputer} show that $\Gamma_{20}(1,9)$ is large (but are inconclusive for $\Gamma_{10}(1,4)$ and $\Gamma_{14}(1,6)$). As in the proof of Corollary~\ref{cor:Gamman1n/2-1n=8mod16}, it follows that if $n\equiv 20$~mod~$40$ then $\Gamma_n(1,n/2-1)$ is large.\em
\end{example}

\section{Abelianisations}\label{sec:abelianisation}

Consider a cyclically presented group $G=G_n(w)$ with abelianisation \linebreak $G_n(w)^\mathrm{ab} \cong A_0\oplus \Z^\beta$, where $A_0$ is a finite abelian group and $\beta=\beta(G_n(w)^\mathrm{ab})$ is the \em Betti number \em (or \em torsion-free rank\em) of $G_n(w)^\mathrm{ab}$.
For each $0\leq i<n$ let $a_i$ denote the exponent sum of $x_i$ in $w=w(x_0,\ldots,x_{n-1})$, and let $f(t)=\sum_{i=0}^{n-1}a_it^i$, $g(t)=t^n-1$. It is well known that
\begin{alignat}{1}
|G_n(w)^\mathrm{ab})|=|\prod_{g(\lambda)=0} f(\lambda)|\label{eq:Gaborder}
\end{alignat}
when this product is non-zero, and $G_n(w)^\mathrm{ab}$ is infinite (i.e.\,$\beta(G_n(w)^\mathrm{ab})>0$) otherwise (\cite{Johnson}). Moreover, as was pointed out in~\cite{WilliamsLOG}, it follows from~\cite[Proposition~1.1]{Ingleton} or~\cite[Theorem~1]{Newman} that
\begin{alignat}{1}
\beta(G_n(w)^\mathrm{ab}) = \mathrm{deg}(\mathrm{gcd} (f(t),g(t))).\label{eq:betti}
\end{alignat}
We use this to obtain the Betti number of $\Gamma_n(k,l)^\mathrm{ab}$. For any $m\geq 1$ we shall use the notation $\zeta_m=e^{2\pi i /m}$.

\begin{theorem}\label{thm:betti}
Suppose $n\geq 3$, $1\leq k,l <n $, $k\neq l$, $(n,k,l)=1$. Then
\[
  \beta(\Gamma_n(k,l)^\mathrm{ab})=\begin{cases} 2&\mathrm{if~(A)~holds,}\\
  0&\mathrm{otherwise}.
\end{cases}\]
\end{theorem}

\begin{proof}
If (A) does not hold then $G_n(w)^\mathrm{ab}$ is finite, and hence $\beta(\Gamma_n(k,l)^\mathrm{ab})=0$, so assume that (A) holds.
We have $f(t)=1+t^k+t^l$, $g(t)=t^n-1$, $n=3m$ (some $m\geq 1$), $k+l\equiv 0$~mod~$3$, $k,l\not \equiv 0$~mod~$3$. Let $d=(k,l)$, $k_1=k/d, l_1=l/d$, so $(d,3)=1$, $k_1+l_1\equiv 0$~mod~$3$. By~\cite[Theorem~3]{Selmer},\cite[Theorem~3]{Ljunggren} $f(t)=(t^{2d}+t^d+1)h(t)$ where $h(t)$ has no roots of modulus~1. Therefore gcd$(f(t),g(t))=(t^{2d}+t^d+1,t^n-1)$. Let $\lambda$ be a root of $t^{2d}+t^d+1$ and of $t^n-1$. Then $\lambda^n=1$ and $\lambda^{2d}+\lambda^d+1=0$ so $\lambda^d\neq 1$ and $(\lambda^d-1)(\lambda^{2d}+\lambda^d+1)=0$, i.e.\,$\lambda^{3d}=1$. Thus $\lambda^{(n,3d)}=1$ so $\lambda^{3(m,d)}=1$. But $(m,d)=(m,k,l)=(n,k,l)=1$ so $\lambda^3=1$ and since $\lambda\neq 1$ we have $\lambda=\zeta_3$ or $\zeta_3^2$. Hence gcd$(f(t),g(t))=(t-\zeta_3)(t-\zeta_3^2)=t^2+t+1$ and so by (\ref{eq:betti}) $\beta(\Gamma_n(k,l)^\mathrm{ab})=\deg (t^2+t+1)=2$, as required.
\end{proof}

As observed in Section~\ref{sec:smallcancSQ} and Lemma~\ref{lem:FFFT=GN(1,n/2-1)} the group $\Gamma_n(1,n/2-1)$ (where $n$ is even) will play an important role in our analysis. The group has infinite abelianisation if and only if $6|n$ and in these cases $\beta(\Gamma_n(1,n/2-1)^\mathrm{ab})=2$, by Theorem~\ref{thm:betti}. In the remaining cases, $(n,6)=2$, we now calculate the order $|\Gamma_n(1,n/2-1)^\mathrm{ab}|$. One motivation for this is to search for invariants to distinguish $\Gamma_n(1,n/2-1)$ from other groups $\Gamma_n(k,l)$. Let $L_n$ denote the Lucas number of order~$n$, i.e.\,$L_0=2,L_1=1,L_{j+2}=L_{j+1}+L_j$ ($j\geq 0$). Theorem~\ref{thm:Gamman1n/2-1finiteabelianisation} gives the order $|\Gamma_n(1,n/2-1)^\mathrm{ab}|$ in terms of the Lucas numbers, when it is finite. The formula presented has an attractive similarity to the order of the abelianisation of the Fibonacci groups $F(2,n)=G_n(1,2)$, which is given by $|F(2,n)^\mathrm{ab}|=L_n-1-(-1)^n$ (see~\cite{Conwayetal}). In particular if $(n,12)=2$ then $|\Gamma_n(1,n/2-1)^\mathrm{ab}|=3|F(2,n/2)^\mathrm{ab}|$, suggesting a deeper relationship between the groups $\Gamma_n(1,n/2-1)$ and $F(2,n/2)$.

\begin{theorem}\label{thm:Gamman1n/2-1finiteabelianisation}
Let $(n,6)=2$. Then the order $|\Gamma_n(1,n/2-1)^\mathrm{ab}|=3(L_{n/2}-1-(-1)^{n/2})$.
\end{theorem}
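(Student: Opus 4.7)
The plan is to apply formula~(\ref{eq:Gaborder}) to the polynomial $f(t)=1+t+t^{n/2-1}$ determined by the relator $w=x_0x_1x_{n/2-1}$, with $g(t)=t^n-1$. Writing $m=n/2$, the hypothesis $(n,6)=2$ is equivalent to $2\mid n$ together with $3\nmid m$, and the factorisation $g(t)=(t^m-1)(t^m+1)$ suggests splitting the product of $f(\lambda)$ over the $n$-th roots of unity into two sub-products, according to whether $\lambda^m=1$ or $\lambda^m=-1$. The point of this split is that in each case the exponent $m-1$ can be reduced: $f(\lambda)=1+\lambda+\lambda^{-1}=\lambda^{-1}(\lambda^2+\lambda+1)$ when $\lambda^m=1$, whereas $f(\lambda)=1+\lambda-\lambda^{-1}=\lambda^{-1}(\lambda^2+\lambda-1)$ when $\lambda^m=-1$.

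For the first sub-product I would factor $\lambda^2+\lambda+1=(\lambda-\zeta_3)(\lambda-\zeta_3^{-1})$ and apply the identity $\prod_{\lambda^m=1}(\lambda-c)=(-1)^m(c^m-1)$ to each factor. Because $3\nmid m$ the values $\zeta_3^{\,m}$ and $\zeta_3^{\,-m}$ are precisely the two primitive cube roots of unity, so the product telescopes to $(\zeta_3-1)(\zeta_3^{-1}-1)=3$. For the second sub-product the key observation is that the roots of $t^2+t-1$ are $-\phi$ and $-\bar\phi$, where $\phi=(1+\sqrt5)/2$; applying the companion identity $\prod_{\lambda^m=-1}(\lambda-c)=(-1)^m(c^m+1)$ and then using the defining relations $\phi\bar\phi=-1$ and $\phi^m+\bar\phi^m=L_m$, this sub-product collapses to a closed form involving $L_m$ and $(-1)^m$. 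Multiplying by the two factors $\prod\lambda^{-1}$, which are easily seen to be $\pm1$ depending on the parity of $m$, and taking absolute values then yields an expression of the form $3\bigl(L_m+1+\varepsilon(-1)^m\bigr)$ for an explicit sign $\varepsilon$.

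The main obstacle is the delicate sign bookkeeping across the several applications of the two identities above and the two $\prod\lambda^{-1}$ contributions. As a cross-check I would verify the overall sign against the example $\Gamma_8(1,3)^{\mathrm{ab}}\cong\Z_3\oplus\Z_3\oplus\Z_3$ of Example~\ref{ex:G_8(1,3)} (where $m=4$ and the order is $27$), and against the consistency relation $|\Gamma_n(1,n/2-1)^{\mathrm{ab}}|=3|F(2,n/2)^{\mathrm{ab}}|$ when $(n,12)=2$ noted by the authors immediately before the statement, using the stated identity $|F(2,n/2)^{\mathrm{ab}}|=L_{n/2}-1-(-1)^{n/2}$ to fix the sign unambiguously. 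No further ingredient beyond~(\ref{eq:Gaborder}) and elementary manipulations with $m$-th roots of unity and the Binet-type formula for $L_m$ is needed.
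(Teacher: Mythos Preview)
Your plan is essentially the paper's: both apply~(\ref{eq:Gaborder}), set $m=n/2$, and split the product over the $n$-th roots of unity according to whether $\lambda^m=1$ or $\lambda^m=-1$ (the paper phrases this as the split $j=2\alpha$ versus $j=2\alpha+1$). For the first sub-product both arguments reach $\pm3$ using $3\nmid m$: the paper telescopes via $\zeta_m^{2\alpha}+\zeta_m^{\alpha}+1=(\zeta_m^{3\alpha}-1)/(\zeta_m^{\alpha}-1)$, while you factor $\lambda^2+\lambda+1=(\lambda-\zeta_3)(\lambda-\zeta_3^{-1})$ and apply $\prod_{\lambda^m=1}(\lambda-c)=(-1)^m(c^m-1)$. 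The one substantive difference is in the second sub-product. The paper rewrites each factor as $1+2i\sin((2\alpha+1)\pi/m)$ and then \emph{cites} a trigonometric product identity of Garnier--Ramar\'e~\cite{GarnierRamare09} to extract the Lucas number. Your route through $t^2+t-1=(t+\phi)(t+\bar\phi)$, the companion identity $\prod_{\lambda^m=-1}(\lambda-c)=(-1)^m(c^m+1)$, and the Binet relation $\phi^m+\bar\phi^m=L_m$ is self-contained and short, giving $(\phi^m+(-1)^m)(\bar\phi^m+(-1)^m)=(-1)^m(L_m+1)+1$ directly. What this buys you is independence from the external reference; the only cost is the sign bookkeeping you already flagged, and your proposed cross-checks against $|\Gamma_8(1,3)^{\mathrm{ab}}|=27$ and the relation with $|F(2,n/2)^{\mathrm{ab}}|$ for $(n,12)=2$ will indeed pin down $\varepsilon$ unambiguously.
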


\begin{proof}
Let $n=2m$ where $(m,3)=1$. By~(\ref{eq:Gaborder}) we have 
\begin{alignat}{1}
  |\Gamma_n(1,n/2-1)^\mathrm{ab}|
&=\big \vert \prod_{j=0}^{2m-1} 1+\zeta_{2m}^j+\zeta_{2m}^{(m-1)j}\big \vert \nonumber\\
&=\big \vert \prod_{\alpha=0}^{m-1} 1+\zeta_{2m}^{2\alpha}+\zeta_{2m}^{(m-1)\cdot 2\alpha} \big \vert\nonumber\\
&\ \qquad \qquad  \cdot  \big \vert \prod_{\alpha=0}^{m-1} 1+\zeta_{2m}^{2\alpha+1}+\zeta_{2m}^{(m-1)\cdot (2\alpha+1)}\big \vert .\label{eq:P}
\end{alignat}
Now
\begin{alignat}{1}
   \prod_{\alpha=0}^{m-1} 1+\zeta_{2m}^{2\alpha}+\zeta_{2m}^{(m-1)\cdot 2\alpha}
&= 3 \prod_{\alpha=1}^{m-1} 1+\zeta_{m}^{\alpha}+\zeta_{m}^{-\alpha}\nonumber\\
&= 3 \prod_{\alpha=1}^{m-1} \zeta_m^{-\alpha} \cdot \prod_{\alpha=1}^{m-1} \zeta_{m}^{2\alpha}+\zeta_{m}^{\alpha}+1\nonumber\\
&= 3 (\pm 1) \prod_{\alpha=1}^{m-1} \left( \frac{\zeta_{m}^{3\alpha}-1}{\zeta_{m}^{\alpha}-1} \right)\nonumber\\
&= \pm 3 \frac{\prod_{\alpha=1}^{m-1} \zeta_{m}^{3\alpha}-1}{\prod_{\alpha=1}^{m-1} \zeta_{m}^{\alpha}-1}\nonumber\\
&= \pm 3 \frac{\prod_{\alpha=1}^{m-1} {\zeta_m^{\alpha}-1}}{\prod_{\alpha=1}^{m-1}
{\zeta_m^\alpha-1}}\quad \mathrm{since}~(m,3)=1\nonumber\\
&=\pm 3.\label{eq:Peven}
\end{alignat}
Further,
\begin{alignat}{1}
   \prod_{\alpha=0}^{m-1} 1+\zeta_{2m}^{2\alpha+1}+\zeta_{2m}^{(m-1)\cdot (2\alpha+1)}
&= \prod_{\alpha=0}^{m-1} 1+\zeta_{2m}^{2\alpha+1}+\zeta_{2m}^{m(2\alpha+1)}\cdot \zeta_{2m}^{-(2\alpha+1)}\nonumber\\
&= \prod_{\alpha=0}^{m-1} 1+\zeta_{2m}^{2\alpha+1}+\zeta_{2}\cdot \zeta_{2m}^{-(2\alpha+1)}\nonumber\\
&= \prod_{\alpha=0}^{m-1} 1+\zeta_{2m}^{2\alpha+1}-\zeta_{2m}^{-(2\alpha+1)}\nonumber\\
&= \prod_{\alpha=0}^{m-1} 1+2i\sin ((2\alpha+1)/2m)\nonumber\\
&= 1-L_m+(-1)^m\label{eq:Podd}
\end{alignat}
by equation~(10) of~\cite{GarnierRamare09}, and the comment after Corollary~1 of~\cite{GarnierRamare09}. Equations~(\ref{eq:P}),(\ref{eq:Peven}),(\ref{eq:Podd}) combine to complete the proof.
\end{proof}

\begin{corollary}\label{cor:Gamman1n/2-1pairwisenonisom}
Suppose $n\neq n'$ and either $(n,6)=2$ or $(n',6)=2$. Then $\Gamma_n(1,n/2-1)\not \cong \Gamma_{n'}(1,n'/2-1)$.
\end{corollary}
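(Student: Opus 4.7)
The plan is to distinguish the two groups by their abelianisations, applying Theorem~\ref{thm:Gamman1n/2-1finiteabelianisation} in the finite-abelianisation case and Theorem~\ref{thm:betti} when the abelianisation is infinite. Without loss of generality I would assume $(n,6)=2$, so that $\Gamma_n(1,n/2-1)^{\mathrm{ab}}$ is finite.

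First I would dispose of the mixed case $(n',6)\neq 2$. Since $n'$ is necessarily even, this forces $6\mid n'$; for the group $\Gamma_{n'}(1,n'/2-1)$ one has $k+l=n'/2$, so condition~(A) holds, and Theorem~\ref{thm:betti} yields $\beta(\Gamma_{n'}(1,n'/2-1)^{\mathrm{ab}})=2$. Thus the abelianisations have different Betti numbers ($0$ and $2$), and the groups cannot be isomorphic.

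The substantive case is $(n,6)=(n',6)=2$. Writing $m=n/2$ and $m'=n'/2$, one has $m,m'\geq 2$, $(m,3)=(m',3)=1$ and $m\neq m'$. By Theorem~\ref{thm:Gamman1n/2-1finiteabelianisation} it suffices to show that
\[ L_{m'}-L_{m}\ \neq\ (-1)^{m'}-(-1)^{m}. \]
If $m$ and $m'$ have the same parity the right-hand side vanishes, and the strict monotonicity of the Lucas sequence on indices $\geq 1$ settles this subcase at once.

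The main obstacle is the opposite-parity subcase, where I must verify $|L_{m'}-L_{m}|\neq 2$. Taking $m<m'$, I would split on the gap $m'-m$: when $m'=m+1$ the difference equals $L_{m-1}\in\{1,3,4,7,11,\dots\}$ for $m\geq 2$, none of which equals $2$; when $m'\geq m+3$ the Lucas recurrence gives $L_{m'}-L_{m}\geq L_{m+3}-L_{m}=2L_{m+1}\geq 2L_{3}=8$. The critical point is that the exceptional identity $L_{2}-L_{1}=2$ corresponds to $m=1$, i.e.\ $n=2$, which is excluded by the standing hypothesis $n\geq 3$; this is precisely what makes the uniform lower bound $m\geq 2$ available and completes the argument.
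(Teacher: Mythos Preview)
Your argument is correct and follows the same approach as the paper: distinguish the groups by their abelianisations, using finiteness in the mixed case and the order formula of Theorem~\ref{thm:Gamman1n/2-1finiteabelianisation} when both satisfy $(n,6)=2$. The paper's proof simply asserts that the orders given by Theorem~\ref{thm:Gamman1n/2-1finiteabelianisation} agree only when $n=n'$, whereas you supply the explicit verification via the Lucas-number case split; your parity analysis and the observation that the sole collision $L_2-L_1=2$ corresponds to the excluded value $n=2$ are exactly what is needed to justify that assertion.
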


\begin{proof}
If $\Gamma_n(1,n/2-1)$ and $\Gamma_{n'}(1,n'/2-1)$ are isomorphic then both groups have the same abelianisation so $(n,6)=(n',6)=2$ and the orders of these abelianisations, given by Theorem~\ref{thm:Gamman1n/2-1finiteabelianisation}, are equal if and only if $n=n'$.
\end{proof}

We now conjecture the minimum number of generators of the abelianisation $\Gamma_n(1,n/2-1)^\mathrm{ab}$ when this is finite. (This has been verified using GAP for $n\leq 500$).

\begin{conjecture}\label{conj:dGamman}
Suppose $(n,6)=2$, $n\geq 8$. Then
\[d(\Gamma_n(1,n/2-1)^\mathrm{ab})=\begin{cases}
  1& \mathrm{if}~(n,16)=2,\\
  2& \mathrm{if}~(n,16)=4~\mathrm{or}~16,\\
  3& \mathrm{if}~(n,16)=8.\\
\end{cases}\]
\end{conjecture}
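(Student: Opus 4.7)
The plan is to compute $d(A)$ via the standard identity $d(A)=\max_p\dim_{\mathbb F_p}(A/pA)$, where $A=\Gamma_n(1,n/2-1)^{\mathrm{ab}}\cong\mathbb Z[t]/(f(t),\,t^n-1)$ with $f(t)=1+t+t^{m-1}$ and $m=n/2$; each $p$-rank then equals $\deg_{\mathbb F_p}\gcd(f(t),t^n-1)$. I would attack the conjecture prime-by-prime, treating $p=2$ and $p=3$ uniformly and then searching for an odd prime $p>3$ responsible for the extra generators when $(n,16)\in\{4,16\}$.

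First I would dispose of $p=2$ and $p=3$. Over $\mathbb F_2$, $\zeta^{2m}=1$ forces $\zeta^m=1$, so $f(\zeta)=0$ collapses to $\zeta^2+\zeta+1=0$, which has no $n$-th root solution when $3\nmid n$; hence $d_2(A)=0$ and $|A|$ is odd. Over $\mathbb F_3$, squaring $\zeta^{m-1}=-\zeta-1$ and using $\zeta^{2m}=1$ yields $(\zeta^2+\zeta)^2=1$, so $\zeta^2+\zeta\pm 1=0$: the $+1$ case collapses to $\zeta=1$, while the $-1$ case produces the two order-$8$ elements of $\mathbb F_9^\times$. A back-substitution using $\zeta^3=-\zeta-1$ and $\zeta^4=-1$ (in $\mathbb F_9$) shows these order-$8$ elements actually satisfy $f(\zeta)=0$ precisely when $m\equiv 4\pmod 8$, i.e.\ $(n,16)=8$. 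Thus $d_3(A)=3$ when $(n,16)=8$ and $d_3(A)=1$ otherwise, already confirming the conjecture in the $(n,16)\in\{2,8\}$ cases (the $(n,16)=8$ lower bound is supplied by Corollary~\ref{cor:Gamman1n/2-1n=8mod16}).

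For $(n,16)\in\{4,16\}$ the extra rank must come from some odd prime $p>3$, and the natural driver is the quadratic $h(X)=X^2+X-1$, whose roots $-\phi,-\psi$ are the golden-ratio elements. A root $\zeta$ of $h$ in $\overline{\mathbb F}_p$ satisfies $f(\zeta)=0$ iff $\mathrm{ord}_p(\phi)\mid n$ and $\phi^m=(-1)^{m+1}$ in $\mathbb F_p^\times$. For $(n,16)=4$ one has $m\equiv 2\pmod 4$ and the prime $p=5$ works: $\phi\equiv 3$ has order $4$ modulo $5$, $h(X)\equiv (X-2)^2\pmod 5$, and the non-separability of $t^n-1$ modulo $5$ promotes the double root of $h$ into a degree-$2$ factor of $\gcd(f,t^n-1)$ (the case $n=20$, where $(t-2)^2\| f\pmod 5$, is a typical illustration). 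For $(n,16)=16$ the prime $5$ fails because $\phi^m\equiv 1$, so I would use a Pisano-period argument to exhibit another prime $p\neq 5$ producing a degree-$2$ factor. The main obstacle will be the matching uniform upper bound $d_p(A)\leq d$ for the conjectured value of $d$, simultaneously for every prime $p$ and every $n$ in the relevant residue class. This requires controlling the total degree of common irreducible factors of $f$ and $t^n-1\pmod p$ across all cyclotomic components of $\mathbb Z[t]/(t^n-1)$, handling both the separable case $p\nmid n$ and the non-separable case $p\mid m$; a structural approach would decompose $A\otimes\mathbb Z_p=\bigoplus_i \mathbb Z_p[t]/(g_i,f)$ via the $p$-adic factorisation $t^n-1=\prod_i g_i$ and bound each local summand via the $\mathfrak p$-adic valuation of $f(\zeta_d)$ in $\mathbb Z[\zeta_d]$, but pushing this through uniformly in $(n,p)$ will likely require new congruential identities for $L_m+1\pm(-1)^m$ modulo small primes beyond those used in Theorem~\ref{thm:Gamman1n/2-1finiteabelianisation}.
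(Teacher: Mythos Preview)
The statement you are attacking is labelled \emph{Conjecture}~\ref{conj:dGamman} in the paper; the authors do not prove it, offering only computational verification for $n\le 500$ together with the lower bound $d\ge 3$ in the case $(n,16)=8$ from Corollary~\ref{cor:Gamman1n/2-1n=8mod16}. So there is no proof in the paper to compare against, and your proposal should be read as an attack on an open problem.

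Your $p$-rank framework and the computations at $p=2,3$ are correct and worthwhile: one does get $d_2(A)=0$ always, and $d_3(A)=3$ precisely when $(n,16)=8$, with $d_3(A)=1$ otherwise. However, the sentence ``already confirming the conjecture in the $(n,16)\in\{2,8\}$ cases'' is a genuine gap. Knowing $d_2$ and $d_3$ does not determine $d(A)=\max_p d_p(A)$; the uniform upper bound over \emph{all} primes is needed in every residue class, not just for $(n,16)\in\{4,16\}$. For $(n,16)=2$ you must still show $d_p(A)\le 1$ for every odd $p>3$ (equivalently, that $A$ is cyclic); for $(n,16)=8$ you must show $d_p(A)\le 3$ for all $p$. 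Your final paragraph correctly identifies this upper bound as the main obstacle, but it applies equally to the two cases you earlier declared settled.

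There is a second gap in the lower-bound argument for $(n,16)=4$. Your mechanism via $p=5$ relies on the double root $t=2$ of $h(t)=t^2+t-1$ modulo~$5$ being promoted to a degree-$2$ factor of $\gcd(f,t^n-1)$ by the non-separability of $t^n-1$ modulo~$5$; but that non-separability requires $5\mid n$. Take $n=28$: here $(n,6)=2$ and $(n,16)=4$, yet $5\nmid 28$, so $t^{28}-1$ is separable over $\mathbb{F}_5$; one checks $(t-2)\,\|\, f$ over $\mathbb{F}_5$ (since $f'(2)\equiv 4\not\equiv 0$), and $(t-2)$ contributes only $1$ to $d_5(A)$. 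Thus for $n=28$ a different prime must supply the second generator, and the Pisano-period search you sketch for $(n,16)=16$ would have to be deployed here too, uniformly in~$n$. As it stands, neither the lower bound $d(A)\ge 2$ for general $(n,16)=4$ nor any of the four upper bounds is established.
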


It would follow from this and from Theorem~\ref{thm:Gamman1n/2-1finiteabelianisation}, the comments that precede it, and the fact that $L_m$ is a multiple of 3 if and only if $m\equiv 2$~mod~$4$, that if $(n,12)=2$ then
$\Gamma_n(1,n/2-1)^\mathrm{ab}\cong \Z_3\oplus F(2,n/2)^\mathrm{ab}\cong\Z_{3L_{n/2}}$. In further support of the third case, by Corollary~\ref{cor:Gamman1n/2-1n=8mod16}(b) we have $d(\Gamma_n(1,n/2-1)^\mathrm{ab})\geq 3$ when $(n,16)=8$.
When $16|n$ $n=2^tm$ where $m$ is odd and $t\geq 4$. In these cases $\Gamma_n(1,n/2-1)$ maps onto $\Gamma_{2^t}(1,2^{t-1}-1)$ and we further conjecture that for $t\geq 3$ we have $\Gamma_{2^t}(1,2^{t-1}-1)^\mathrm{ab}\cong \Z_3 \oplus \Z_{L_{2^{t-2}}}^2$, where $L_j$ denotes the $j$'th Lucas number (we have verified this for $3\leq t\leq 12$). If that is indeed the case then $d(\Gamma_n(1,n/2-1)^\mathrm{ab})\geq 2$ in the second part of line 2 of Conjecture~\ref{conj:dGamman}.

\section{Isomorphism classes when $n$ has few prime factors}\label{sec:n=p^aq^b}

We first note the following:

\begin{lemma}\label{lem:(n,k)=1or(n,l)=1or(n,k-l)=1}
Let $n\geq 3$, $1\leq k,l <n$, $k\neq l$. If $(n,k)=1$ or $(n,l)=1$ or $(n,k-l)=1$ then $\Gamma_n(k,l)\cong \Gamma_n(1,l')$ for some $1\leq l'< n$.
\end{lemma}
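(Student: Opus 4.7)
The plan is to reduce each of the three cases to an application of part~(v) of Lemma~\ref{lem:gammaiso}, by first using parts (ii) or (iv) to move the coprime parameter into the first coordinate, and then multiplying through by its inverse modulo~$n$.

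First I would dispose of the case $(n,k)=1$: choose $\alpha$ with $\alpha k\equiv 1~\mathrm{mod}~n$, and Lemma~\ref{lem:gammaiso}(v) gives $\Gamma_n(k,l)\cong\Gamma_n(1,l')$, where $l'\equiv \alpha l~\mathrm{mod}~n$ (taken in the range $1\leq l'< n$). Next, for the case $(n,l)=1$, I would apply Lemma~\ref{lem:gammaiso}(ii) to obtain $\Gamma_n(k,l)\cong\Gamma_n(l,k)$ and then invoke the previous case, since now $\gcd(n,l)=1$. Finally, for the case $(n,k-l)=1$, I would use Lemma~\ref{lem:gammaiso}(iv) to replace $l$ by $k-l$, giving $\Gamma_n(k,l)\cong\Gamma_n(k,k-l)$, then Lemma~\ref{lem:gammaiso}(ii) to swap these and obtain $\Gamma_n(k-l,k)$, and finally appeal to Lemma~\ref{lem:gammaiso}(v) with $\alpha(k-l)\equiv 1~\mathrm{mod}~n$ to land at $\Gamma_n(1,l'')$ with $l''\equiv\alpha k~\mathrm{mod}~n$.

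The only bookkeeping point is to confirm that in each case the resulting $l'$ lies in $\{1,2,\ldots,n-1\}$, i.e.\ is not congruent to $0~\mathrm{mod}~n$. In the first case this holds because $l\not\equiv 0~\mathrm{mod}~n$ and $\alpha$ is a unit; in the second case because $k\not\equiv 0~\mathrm{mod}~n$; in the third case because $k\not\equiv 0~\mathrm{mod}~n$ and $\alpha$ is a unit modulo~$n$.

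There is no real obstacle; the argument is essentially a mechanical use of the isomorphisms catalogued in Lemma~\ref{lem:gammaiso}. The mild subtlety is simply recognising that parts (ii) and (iv) can be combined to transfer the coprimality hypothesis from $k-l$ (or from $l$) to the first coordinate, after which part (v) finishes the job.
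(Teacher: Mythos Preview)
Your argument is correct and is essentially the paper's own proof: the paper likewise observes that $\Gamma_n(k,l)\cong\Gamma_n(l,k)\cong\Gamma_n(k-l,k)$ via parts (ii) and (iv) of Lemma~\ref{lem:gammaiso}, so that whichever of $k$, $l$, $k-l$ is coprime to $n$ can be placed in the first coordinate, and then part~(v) finishes. Your write-up is in fact slightly more explicit than the paper's, which compresses all three cases into a single line.
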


\begin{proof}
By Lemma~\ref{lem:gammaiso}(ii),(iv) $\Gamma_n(k,l)\cong\Gamma_n(l,k)\cong \Gamma_n(k-l,k)$ so the result follows from Lemma~\ref{lem:gammaiso}(v).
\end{proof}

In many cases we are therefore reduced to considering the groups $\Gamma_n(1,l)$. We note some isomorphisms within this class of groups.

\begin{lemma}\label{lem:gamman1liso}
\begin{itemize}
  \item[(a)] If $(n,l)=1$ then $\Gamma_n(1,l)\cong \Gamma_n(1,l')$ where $ll'\equiv 1$~mod~$n$;
  \item[(b)] if $(n,l)=1$ then $\Gamma_n(1,l)\cong \Gamma_n(1,n+1-l')$ where $ll'\equiv 1$~mod~$n$;
  \item[(c)] if $(n,l-1)=1$ then $\Gamma_n(1,l)\cong \Gamma_n(1,1+\bar{l})$ where $(l-1)\bar{l}\equiv 1$~mod~$n$;
  \item[(d)] if $(n,l-1)=1$ then $\Gamma_n(1,l)\cong \Gamma_n(1,n-\bar{l})$ where $(l-1)\bar{l}=1$~mod~$n$.
\end{itemize}
\end{lemma}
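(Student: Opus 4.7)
My plan is to derive all four isomorphisms by chaining together the five parts of Lemma~\ref{lem:gammaiso}. The guiding principle is that, under the stated coprimality hypothesis, a key integer ($l$ in parts (a),(b) and $l-1$ in parts (c),(d)) is a unit modulo $n$, so Lemma~\ref{lem:gammaiso}(v) can be applied with that unit's inverse as the multiplier $\alpha$; after that, everything reduces to bookkeeping modulo $n$ using parts (i)--(iv).

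For part (a), I would apply Lemma~\ref{lem:gammaiso}(v) with $\alpha=l'$ (which is a unit mod $n$ since $(n,l)=1$ forces $(n,l')=1$), obtaining $\Gamma_n(1,l)\cong \Gamma_n(l',l'l)=\Gamma_n(l',1)$ because $l'l\equiv 1\pmod n$, and then swap coordinates using Lemma~\ref{lem:gammaiso}(ii) to land at $\Gamma_n(1,l')$. Part (b) then follows immediately by applying Lemma~\ref{lem:gammaiso}(iv) to the conclusion of (a): $\Gamma_n(1,l')\cong \Gamma_n(1,1-l')$, and the index $1-l'$ reduces to $n+1-l'$ modulo $n$.

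For part (c), let $\bar{l}$ denote the inverse of $l-1$ modulo $n$. Applying Lemma~\ref{lem:gammaiso}(v) with $\alpha=\bar{l}$ gives $\Gamma_n(1,l)\cong \Gamma_n(\bar{l},\bar{l}l)$, and since $\bar{l}l=\bar{l}(l-1)+\bar{l}\equiv 1+\bar{l}\pmod n$, this equals $\Gamma_n(\bar{l},1+\bar{l})$. I would then apply Lemma~\ref{lem:gammaiso}(iii) to obtain $\Gamma_n(-1,-(1+\bar{l}))$, and finally rescale by $\alpha=-1$ via Lemma~\ref{lem:gammaiso}(v) to arrive at $\Gamma_n(1,1+\bar{l})$. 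Part (d) is then deduced from (c) by one further application of Lemma~\ref{lem:gammaiso}(iv): $\Gamma_n(1,1+\bar{l})\cong \Gamma_n(1,1-(1+\bar{l}))=\Gamma_n(1,-\bar{l})=\Gamma_n(1,n-\bar{l})$.

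There is no substantive obstacle here; the only care required is to reduce intermediate indices modulo $n$ into the prescribed range $\{1,\ldots,n-1\}$ and to confirm at each invocation of Lemma~\ref{lem:gammaiso}(v) that the chosen multiplier is a unit modulo $n$ (which is automatic from the hypotheses $(n,l)=1$ or $(n,l-1)=1$, and trivially holds for $\alpha=-1$).
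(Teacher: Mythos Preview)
Your proposal is correct and follows essentially the same approach as the paper: both derive all four isomorphisms by chaining the parts of Lemma~\ref{lem:gammaiso}. The specific chains differ only cosmetically---for instance, the paper proves (c) via the sequence (ii), (iii), (v), (iv) rather than your (v), (iii), (v), and derives (d) from (b) by relabelling rather than from (c) via (iv)---but these are interchangeable routes through the same set of elementary moves.
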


\begin{proof}
(a),(b) We have $\Gamma_n(1,l)\cong \Gamma_n(l,1) \cong \Gamma_n(1,l') \cong \Gamma_n(1,n+1-l')$ by applying parts (ii),(v),(iv) of Lemma~\ref{lem:gammaiso} in turn.

(c) We have $\Gamma_n(1,l) \cong \Gamma_n(l,1) \cong \Gamma_n(l-1,-1) \cong \Gamma_n(1,-\bar{l}) \cong \Gamma_n(1,1+\bar{l})$ by applying parts (ii),(iii),(v),(iv) of Lemma~\ref{lem:gammaiso} in turn.

(d) Putting $m=1-l'$, $\bar{m}=n-l$ in part (b) gives that if $(n,\bar{m})=1$ then $\Gamma_n(1,n-\bar{m})\cong \Gamma_n(1,m)$ where $(m-1)\bar{m}\equiv 1$~mod~$n$ which, after relabelling, is the statement of part~(d).
\end{proof}

\begin{lemma}\label{lem:Gamman1lLsets}
Let $n\geq 3$, $1< l< n$.
\begin{itemize}
\item[(a)] If $n\equiv 0$~mod~$4$ and $n\geq 4$ then $\Gamma_n(1,l)\cong \Gamma_n(1,l')$ for some $l'\in L$, where $L=\{2,\ldots , n/2\}$;
\item[(b)] if $n\equiv 2$~mod~$4$ and $n\geq 10$ then $\Gamma_n(1,l)\cong \Gamma_n(1,l')$ for some $l'\in L$, where $L=\{2,\ldots , n/2\}\backslash \{(n+2)/4\}$;
\item[(c)] if $n\equiv 3$~mod~$6$ and $n\geq 9$ then $\Gamma_n(1,l)\cong \Gamma_n(1,l')$ for some $l'\in L$, where $L=\{2,\ldots ,(n-3)/2\}$;
\item[(d)] if $n\equiv 1$~mod~$6$ and $n\geq 9$ then $\Gamma_n(1,l)\cong \Gamma_n(1,l')$ for some $l'\in L$, where $L=\{2,\ldots ,(n-3)/2\}\backslash \{(n+2)/3\}$;
\item[(e)] if $n\equiv 5$~mod~$6$ and $n\geq 9$ then $\Gamma_n(1,l)\cong \Gamma_n(1,l')$ for some $l'\in L$, where $L=\{2,\ldots ,(n-3)/2\}\backslash \{(n+1)/3\}$.
\end{itemize}
\end{lemma}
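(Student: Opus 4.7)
The plan is to apply Lemma~\ref{lem:gammaiso}(iv) with $k=1$, which yields the reflection $\Gamma_n(1,l)\cong\Gamma_n(1,n+1-l)$, to ``fold'' $l$ into the lower half of the range. This reduces $l\in\{2,\ldots,n-1\}$ to $\{2,\ldots,n/2\}$ when $n$ is even and to $\{2,\ldots,(n+1)/2\}$ when $n$ is odd, which proves part~(a) at once. For the other parts, what remains is to dispose of a short list of boundary values using Lemma~\ref{lem:gamman1liso}.

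For part~(b), with $n=4j+2$ so that $j\geq 2$, the only value left outside $L$ is $l=j+1=(n+2)/4$. When $j$ is even, $\gcd(n,l)=\gcd(2,j+1)=1$, so Lemma~\ref{lem:gamman1liso}(a) applies and sends $l$ to its inverse $2j+3$ modulo $n$; a further reflection gives $n+1-(2j+3)=2j\in L$. When $j$ is odd, $\gcd(n,l-1)=\gcd(4j+2,j)=\gcd(2,j)=1$, so Lemma~\ref{lem:gamman1liso}(c) applies, and solving $j\bar{l}\equiv 1\pmod n$ gives $\bar{l}=2j-1$, whence $1+\bar{l}=2j\in L$.

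For parts~(c),(d),(e), in which $n$ is odd and $n\geq 9$, the reflection leaves $l=(n-1)/2$ and $l=(n+1)/2$ (and additionally $l=(n+2)/3$ in (d), $l=(n+1)/3$ in (e)) to be dealt with. Both $(n\pm 1)/2$ are coprime to $n$: if $d\mid n$ and $d\mid(n\pm 1)/2$ then $d\mid 2\cdot(n\pm 1)/2=n\pm 1$, whence $d\mid 1$. Since $2\cdot(n+1)/2\equiv 1\pmod n$, Lemma~\ref{lem:gamman1liso}(a) gives $\Gamma_n(1,(n+1)/2)\cong\Gamma_n(1,2)$; since $2\cdot(n-1)/2\equiv-1\pmod n$ so that the inverse of $(n-1)/2$ is $n-2$, Lemma~\ref{lem:gamman1liso}(b) gives $\Gamma_n(1,(n-1)/2)\cong\Gamma_n(1,3)$, and the values $2,3$ lie in $L$ in each case. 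For~(d), $n=6j+1$ and $l=2j+1$: here $\gcd(n,l-1)=\gcd(6j+1,2j)=1$, and from $2j\cdot(-3)\equiv 1\pmod n$ Lemma~\ref{lem:gamman1liso}(c) sends $l$ to $6j-1$, which reflects to $3\in L$. For~(e), $n=6j+5$ and $l=2j+2$: since $3l=n+1\equiv 1\pmod n$ we have $\gcd(n,l)=1$, and Lemma~\ref{lem:gamman1liso}(a) sends $l$ directly to $3\in L$.

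The main obstacle is the bookkeeping: for each exceptional $l$ I must check which of $\gcd(n,l)$ or $\gcd(n,l-1)$ equals $1$ in order to select the right clause of Lemma~\ref{lem:gamman1liso}, execute the modular inverse calculation, and finally confirm that the resulting representative (possibly after a further reflection) lies in the prescribed $L$ and is not itself one of the excluded values $(n+2)/4$, $(n+2)/3$, or $(n+1)/3$. The lower bounds on $n$ stated in the lemma are precisely what is needed to keep the reduced targets $2$ and $3$ safely inside $L$.
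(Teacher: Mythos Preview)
Your proof is correct and follows essentially the same approach as the paper: fold $l$ into the lower half of its range via the reflection $l\mapsto n+1-l$ (Lemma~\ref{lem:gammaiso}(iv)), then eliminate the handful of boundary values using the identities of Lemma~\ref{lem:gamman1liso}. The only differences are cosmetic---in~(d) you route through Lemma~\ref{lem:gamman1liso}(c) followed by a reflection where the paper applies Lemma~\ref{lem:gammaiso}(v),(iv),(ii) directly, and in~(e) you invoke Lemma~\ref{lem:gamman1liso}(a) where the paper cites~(d)---but the target representatives ($n/2-1$ in~(b), and $2,3$ in~(c)--(e)) and the overall logic are identical.
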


\begin{proof}
If $l>n/2$ then $l'=n+1-l\in\{2,\ldots ,n/2\}$ and by Lemma~\ref{lem:gammaiso}(iv) we have $\Gamma_n(1,l)\cong \Gamma_n(1,l')$. Thus we may assume $L\subseteq \{2,\ldots ,n/2\}$. In particular, part (a) is proved.
To prove (b) we show that $\Gamma_n(1,(n+2)/4)\cong \Gamma_n(1,n/2-1)$. Let $n\equiv 2\epsilon$~mod~$8$ ($\epsilon=\pm 1$). Now $((n+2\epsilon)/4,n)=1$ and $(n+2\epsilon)/4 \cdot (n+4\epsilon)/2\equiv 1$~mod~$n$ so the result follows from parts (b),(c) of Lemma~\ref{lem:gamman1liso} for $\epsilon=1$ and $\epsilon=-1$, respectively.

Now suppose that $n$ is odd. Then by applying parts (v) (with $\psi=2$),(iii),(ii) of Lemma~\ref{lem:gammaiso} in turn, we have $\Gamma_n(1,(n-1)/2)\cong \Gamma_n(1,3)$, so we may assume $L\subseteq \{2,\ldots ,(n-3)/2\}$. In particular, part (c) is proved. If $n\equiv 1$~mod~$6$ then by applying parts (v) (with $\psi=3$),(iv),(ii) of Lemma~\ref{lem:gammaiso} in turn we have $\Gamma_n(1,(n+2)/3)\cong \Gamma_n(1,3)$, proving (d). If $n\equiv 5$~mod~$6$ then by applying parts (v) (with $\psi=3$),(ii) of Lemma~\ref{lem:gammaiso} in turn we have $\Gamma_n(1,(n+1)/3)\cong \Gamma_n(1,3)$, proving (e).
\end{proof}

We apply this to study isomorphism classes and the set $S(n)$ defined in the Introduction when either $n$ has at most two prime factors or when (A) holds and $n$ has at most three prime factors.

\begin{corollary}\label{cor:Gammap^aq^bLsets}
\begin{itemize}
  \item[(a)] Suppose $n=p^\alpha q^\beta$, where $p,q$ are distinct primes, $\alpha,\beta\geq 0$, $1\leq k,l<n$, $k\neq l$, and $(n,k,l)=1$. Then $\Gamma_n(k,l)\cong \Gamma_n(1,l')$ for some $l'\in L$ where $L$ is as given in Lemma~\ref{lem:Gamman1lLsets}, and hence
$S(n)=\{\Gamma_n(1,l)\ |\ l\in L\}$.
\item[(b)] Suppose $n=p^\alpha q^\beta r^\gamma$, where $p,q,r$ are distinct primes, $\alpha,\beta,\gamma\geq 0$, $1\leq k,l<n$, $k\neq l$, $(n,k,l)=1$, and that (A) holds. Then $\Gamma_n(k,l)\cong \Gamma_n(1,l')$ for some $l'\in L$ where $L$ is as given in Lemma~\ref{lem:Gamman1lLsets}.
\end{itemize}
\end{corollary}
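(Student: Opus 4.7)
The plan is to reduce both parts to Lemmas~\ref{lem:(n,k)=1or(n,l)=1or(n,k-l)=1} and~\ref{lem:Gamman1lLsets}: it suffices to verify, under the hypotheses of each part, that at least one of the three gcds $(n,k)$, $(n,l)$, $(n,k-l)$ equals~$1$. For then Lemma~\ref{lem:(n,k)=1or(n,l)=1or(n,k-l)=1} yields $\Gamma_n(k,l)\cong\Gamma_n(1,l'')$ for some~$l''$, and the appropriate case of Lemma~\ref{lem:Gamman1lLsets} (determined by $n\bmod 4$ or $n\bmod 6$) yields $l'\in L$ with $\Gamma_n(1,l'')\cong\Gamma_n(1,l')$.

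For part~(a), I would argue by contradiction and suppose each of $(n,k)$, $(n,l)$, $(n,k-l)$ exceeds~$1$. Let $P_k,P_l\subseteq\{p,q\}$ be the sets of primes dividing $k$ and $l$ respectively; then $P_k$ and $P_l$ are non-empty, and $(n,k,l)=1$ forces $P_k\cap P_l=\emptyset$. The only possibility is, up to swapping $p$ and $q$, that $P_k=\{p\}$ and $P_l=\{q\}$. But then $p\mid k$, $p\nmid l$ give $p\nmid k-l$, and $q\mid l$, $q\nmid k$ give $q\nmid k-l$, so $(n,k-l)=1$, a contradiction. The equality $S(n)=\{\Gamma_n(1,l)\mid l\in L\}$ then follows: the inclusion $\supseteq$ holds because each triple $(n,1,l)$ with $l\in L$ satisfies the standing hypotheses, and $\subseteq$ is what has just been proved.

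For part~(b), condition~(A) forces $3\mid n$, so one of $p,q,r$ equals~$3$; after relabelling, take $r=3$. From $3\mid k+l$ and $(n,k,l)=1$ one sees that $3\nmid k$ and $3\nmid l$ (otherwise $3$ would divide both, and hence the triple gcd), and $k\equiv-l\pmod 3$ together with $3\nmid l$ then yields $3\nmid k-l$. Writing $n'=p^\alpha q^\beta$, the three gcds $(n,k)$, $(n,l)$, $(n,k-l)$ are unchanged on replacing $n$ by $n'$, and $(n',k,l)\mid(n,k,l)=1$. The part~(a) argument applied to $n'$ now gives the conclusion.

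The proof has no substantive obstacle; the content is the brief case analysis in part~(a), which expresses the fact that if $n$ has exactly two prime factors then not every entry of the triple $(k,l,k-l)$ can share a prime with~$n$ without some prime dividing both $k$ and $l$. Part~(b) reduces to this by using~(A) to eliminate the prime~$3$.
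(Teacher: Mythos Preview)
Your proposal is correct and follows essentially the same approach as the paper: both parts reduce to showing that one of $(n,k)$, $(n,l)$, $(n,k-l)$ equals~$1$, then invoke Lemmas~\ref{lem:(n,k)=1or(n,l)=1or(n,k-l)=1} and~\ref{lem:Gamman1lLsets}. Your organisation of part~(b) as a reduction to part~(a) by stripping off the prime~$3$ is a mild repackaging of the paper's direct case analysis, but the content is identical; one small imprecision is that your sets $P_k,P_l$ should be the primes of~$n$ (rather than of $\{p,q\}$) dividing $k,l$, so that $(n,k,l)=1$ genuinely forces $P_k\cap P_l=\emptyset$ in the degenerate case $\alpha=0$ or $\beta=0$.
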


\begin{proof}
(a) At least one of the conditions $(n,k)=1$, $(n,l)=1$, or $(n,k-l)=1$ holds so by Lemma~\ref{lem:(n,k)=1or(n,l)=1or(n,k-l)=1} $\Gamma_n(k,l)\cong \Gamma_n(1,l')$ for some $1<l'<n$ and the result follows from Lemma~\ref{lem:Gamman1lLsets}.

(b) Since (A) holds we have (without loss of generality) that $p=3$ and that $3$ does not divide $k$ or $l$. Suppose for contradiction that $(n,k)>1$ and $(n,l)>1$ and $(n,k-l)>1$. Then (without loss of generality) $q$ divides $k$. Since $(n,k,l)=1$ we have that $q$ does not divide $l$, so $r$ divides $l$ and hence $r$ does not divide $k$. Then $(n,k-l)>1$ implies that $p$ divides $k-l$, but since $p$ divides $k+l$ it also divides $2k$, a contradiction. Therefore $(n,k)=1$ or $(n,l)=1$ or $(n,k-l)=1$ so by Lemma~\ref{lem:(n,k)=1or(n,l)=1or(n,k-l)=1} $\Gamma_n(k,l)\cong \Gamma_n(1,l')$ for some $1<l'<n$ and the result follows from Lemma~\ref{lem:Gamman1lLsets}.
\end{proof}

The hypothesis that (A) holds cannot be directly removed from Corollary~\ref{cor:Gammap^aq^bLsets}(b). To see this, it suffices to observe that $\Gamma_{30}(2,5)\not \cong \Gamma_{30}(1,l')$ for any $l'$ (which can be seen by comparing abelianisations). Nor can Corollary~\ref{cor:Gammap^aq^bLsets}(b) be extended to the case when $n$ has at least 4 prime factors, since $\Gamma_{210}(2,7)\not \cong \Gamma_{210}(1,l')$ for any $l'$ (again by comparing abelianisations). On the other hand, if we assume that $p\geq 3$ and $\alpha\geq 1$ then the hypothesis that (A) holds can be replaced by $k+l\equiv 0$~mod~$p$, the proof proceeding as before.

Recalling that if any of $k,l,k-l$ is coprime to $n$ then $\Gamma_n(k,l)$ is isomorphic to some $\Gamma_n(1,l')$, we now turn to the case $n=pqr$ where $p,q,r$ are distinct primes. For $p=2$ there is at most one group $\Gamma_n(k,l)$ that is not isomorphic to any $\Gamma_n(1,l')$:

\begin{lemma}\label{lem:2qr}
Let $n=2qr$ where $q,r\geq 3$ are distinct primes and suppose $1\leq k,l<n$ satisfy $(n,k,l)=1$, $(n,k)>1$, $(n,l)>1$ and $(n,k-l)>1$. Then $\Gamma_n(k,l)\cong \Gamma_n(q,r)$.
\end{lemma}

\begin{proof}
Since $(n,k,l)=1$, it follows that at most one of $k,l$ is even, and since $\Gamma_n(k,l)\cong \Gamma_n(k,k-l)\cong\Gamma_n(l,l-k)$ we may further assume that both $k,l$ are odd and since $\Gamma_n(k,l)\cong \Gamma_n(l,k)$ we may assume that $q|k$, and $r|l$. If then $q|l$ or $r|k$ we get a contradiction to $(n,k,l)=1$ so $(n,k)=q$ and $(n,l)=r$. Let
\[ \nu = (k^{-1}~\mathrm{mod}~r)q+(l^{-1}~\mathrm{mod}~q)r\]
and let $\psi= \nu$, if $\nu$ is odd, and let $\psi= \nu+qr$, if $\nu$ is even. Then $(\nu,q)=1,(\nu,r)=1$ so $(\nu,qr)=1$ and hence (since $\psi$ is odd), $(\psi,2qr)=1$. Now $\nu (k/q)\equiv 1$~mod~$r$ and $\nu (l/r)\equiv 1$~mod~$q$ so $\psi (k/q)\equiv 1$~mod~$r$, $\psi (l/r)\equiv 1$~mod~$q$. Thus  $\psi (k/q)\equiv 1$ or $r+1$~mod~$2r$, $\psi (l/r)\equiv 1$ or $q+1$~mod~$2q$. But $\psi, k/q,l/r$ are all odd, so $\psi (k/q)\equiv 1$~mod~$2r$, $\psi (l/r)\equiv 1$~mod~$2q$, and so $\psi k\equiv q$~mod~$2qr$, $\psi l\equiv r$~mod~$2qr$. Then, by Lemma~\ref{lem:gammaiso}(v), we have $\Gamma_n(k,l)\cong \Gamma_n(\psi k,\psi l)=\Gamma_n(q,r)$, as required.
\end{proof}

For $p\geq 5$ we now conjecture the value of $f(p)$, which we have verified for all $p\leq 293$.

\begin{conjecture}\label{conj:f(p)}
Let $p\geq 5$ be prime. Then
\[f(p)=\begin{cases}
  (p+5)/6& \mathrm{if}~p\equiv 1~mod~6,\\
  (p+1)/6& \mathrm{if}~p\equiv 5~mod~6.
\end{cases}\]
\end{conjecture}

\section{The groups $\Gamma_n(k,l)$ for $n\leq 29$}\label{sec:n<19}

As seen in Section~\ref{sec:n=p^aq^b} if $n$ has at most two prime factors then $\Gamma_n(k,l)\cong\Gamma_n(1,l')$ for some $l'\in L$, where $L$ is as given in Lemma~\ref{lem:Gamman1lLsets}. Since the least integer that has more than two prime factors is 30, in this section we provide a complete list of the groups $\Gamma_n(k,l)$ (up to isomorphism) for $n\leq 29$.

Using Lemma~\ref{lem:gamman1liso}, the following additional isomorphisms can be found amongst the groups $\Gamma_n(1,l')$, $l'\in L$, where we write $(n;l_1,\ldots,l_t)$ to mean $\Gamma_n(1,l_1)\cong \cdots \cong \Gamma_n(1,l_t)$:
\begin{alignat*}{1}
&\ (14;3,5), (16;3,6), (16;4,5), (17;4,5,7), (18;6,7), (19;4,5,6), (20;3,7), \\
&\  (20;4,8), (22;3,8), (22;4,7), (22;5,9), (23;4,6,9), (23;5,7,10),\\
&\ (25;4,7,8), (25;5,6), (25;10,11), (26;3,9), (26;4,10), (26;5,6), (26;8,11),\\
&\ (27;4,7), (27;5,8,11), (27;6,12), (27;9,10), (28;3,10), (28;4,9), (28;5,12),\\
&\ (28;6,11), (29;4,8,11), (29;5,6,7), (29;9,12,13).
\end{alignat*}

(For example, $(16;3,6)$ follows from Lemma~\ref{lem:gamman1liso}(b) by setting $n=16,l=3,l'=11$.)

Thus, for any given $3\leq n\leq 29$ we obtain a list $L$ of values $l'$ such that any given group $\Gamma_n(k,l)$ is isomorphic to $\Gamma_n(1,l')$ for some $l'\in L$. We wish to show that these lists are the smallest possible, in the sense that $\Gamma_n(1,l)\not  \cong \Gamma_n(1,l')$ for distinct $l,l'\in L$. In most cases we are able to show this by considering the groups' structural properties given by Table~\ref{tab:EdjvetWilliams} (whether the groups or their abelianisations are finite or infinite, the groups' deficiencies, or the structure of the abelianisations) or, when this is not enough, by computing and comparing abelianisations (which is easy to do using GAP). In the few cases where we are unable to distinguish groups this way, we can prove non-isomorphism by comparing the second derived quotients. (Specifically, these are the cases $\Gamma_{10}(1,4)\not\cong\Gamma_{10}(1,5)$, $\Gamma_{16}(1,4)\not\cong\Gamma_{16}(1,8)$, $\Gamma_{18}(1,4)\not\cong\Gamma_{18}(1,9)$, $\Gamma_{20}(1,4)\not\cong \Gamma_{20}(1,5)$, $\Gamma_{22}(1,4)\not\cong \Gamma_{22}(1,5)$.) In this way we are able to determine precisely the sets $S(n)$ for $3\leq n\leq 29$. For $6\leq n\leq 29$ the groups in these sets are listed in Table~\ref{tab:CRSgroupsn<30}, where (for a given $n$) the groups are pairwise non-isomorphic, and so the value of $f(n)$ can be immediately obtained as the number of values of $l$ listed. (For $n\leq 5$ the group $\Gamma_n(k,l)\cong \Gamma_n(1,2)$ which is $\Z*\Z$ for $n=3$ and is $\Z_3$ for $n=4,5$.)

The identification of the group as $\Z*\Z$, $\Z*\Z*\Z_\gamma$, `Metacyclic' (referring to the group $B((2^n-(-1)^n)/3,3,2^{2n/3},1)$), or as an infinite group can be obtained from Table~\ref{tab:EdjvetWilliams}, and the identification of the presentation as C(3)-T(6)-special or non-special (denoted `C(3)-T(6)-s' and `C(3)-T(6)-ns') follow from Theorems~\ref{thm:C3T6} and~\ref{thm:C3T6special}. The groups $\Gamma_7(1,3), \Gamma_8(1,3)$ and $\Gamma_{20}(1,9)$ were discussed in Examples~\ref{ex:G_7(1,3)},\ref{ex:G_8(1,3)},  and~\ref{ex:G_20(1,9)}, respectively.

\begin{table}
\begin{center}
\begin{tabular}{@{} l l}
\begin{tabular}{|@{\,}p{0.4cm}@{\,}|p{1.7cm}|p{3.0cm}|}
\hline
$n$ & $l$ &$\Gamma_n(1,l)$\\
\hline
$6$ & $2$ & $\Z*\Z$\\
    & $3$ & metacyclic\\\hline
$7$ & $2$ & $\Z_3$\\
    & $3$ & C(3)-T(6)-s\\\hline
$8$ & $2$ & $\Z_3$\\
    & $3$ & large\\
    & $4$ & C(3)-T(6)-ns\\\hline
$9$ & $2$ & $\Z*\Z$\\
    & $3$ & metacyclic\\\hline
$10$ & $2$ & $\Z_3$\\
     & $4$ & infinite\\
     & $5$ & C(3)-T(6)-ns\\\hline
$11$ & $2$ & $\Z_3$\\
     & $3$ & C(3)-T(6)-ns\\\hline
$12$ & $2$ & $\Z* \Z$\\
     & $3,6$ & C(3)-T(6)-ns\\
     & $4$ & metacyclic\\
     & $5$ & $\Z_5*\Z*\Z$\\\hline
$13$ & $2$ & $\Z_3$\\
     & $3,4$ & C(3)-T(6)-ns\\\hline
$14$ & $2$ & $\Z_3$\\
     & $3,7$ & C(3)-T(6)-ns\\
     & $6$ & infinite\\\hline
$15$ & $2$ & $\Z* \Z$\\
     & $3,4$ & C(3)-T(6)-ns\\
     & $5$ & $\Z_{11}*\Z*\Z$\\
     & $6$ & metacyclic\\\hline
$16$ & $2$ & $\Z_3$\\
     & $3,4,8$ & C(3)-T(6)-ns\\
     & $7$ & infinite\\\hline
$17$ & $2$ & $\Z_3$\\
     & $3,4$ & C(3)-T(6)-ns\\\hline
$18$ & $2$ & $\Z* \Z$\\
     & $3,4,9$ & C(3)-T(6)-ns\\
     & $8$ & $\Z_{19}*\Z*\Z$\\
     & $6$ & metacyclic\\\hline
\end{tabular}

&

\begin{tabular}{|@{\,}p{0.6cm}@{\,}|p{1.7cm}|p{3.2cm}|}
\hline
$n$ & $l$ &$\Gamma_n(1,l)$\\
\hline
$19$ & $2$ & $\Z_3$\\
     & $3,4,8$ & C(3)-T(6)-ns\\\hline
$20$ & $2$ & $\Z_3$\\
     & $3-6,10$ & C(3)-T(6)-ns\\
     & $9$ & large\\\hline
$21$ & $2$ & $\Z*\Z$\\
     & $3,4,6,9$ & C(3)-T(6)-ns\\
     & $5$ & C(3)-T(6)-ns, large\\
     & $7$ & metacyclic\\
     & $8$ & $\Z_{43}*\Z*\Z$\\\hline
$22$ & $2$ & $\Z_3$\\
     & $3-5,11$ & C(3)-T(6)-ns\\
     & $10$ & infinite\\\hline
$23$ & $2$ & $\Z_3$\\
     & $3-5$ & C(3)-T(6)-ns\\\hline
$24$ & $2$ & $\Z*\Z$\\
     & $3,4,6,$ & C(3)-T(6)-ns\\
     & $7,10,12$ & \\
     & $5$ & C(3)-T(6)-ns, large\\
     & $8$ & $\Z_{85}*\Z*\Z$\\
     & $9$ & metacyclic\\
     & $11$ & large\\\hline
$25$ & $2$ & $\Z_3$\\
     & $3-5,10$ & C(3)-T(6)-ns\\\hline
$26$ & $2$ & $\Z_3$\\
     & $3-5,8,13$ & C(3)-T(6)-ns\\
     & $12$ & infinite\\\hline
$27$ & $2$ & $\Z*\Z$\\
     & $3,4,6$ & C(3)-T(6)-ns\\
     & $5$ & C(3)-T(6)-ns, large\\
     & $9$ & metacyclic\\\hline
$28$ & $2$ & $\Z_3$\\
     & $3-8,14$ & C(3)-T(6)-ns\\
     & $13$ & infinite\\\hline
$29$ & $2$ & $\Z_3$\\
     & $3-5,9$ & C(3)-T(6)-ns\\\hline
\end{tabular}
\end{tabular}
\end{center}

  \caption{The groups $\Gamma_n(1,l)$ for $6\leq n\leq 29$.\label{tab:CRSgroupsn<30}}
\end{table}

\section{The sets $S(n)$ for $n\geq 19$}\label{sec:ABCD}

When $n\geq 19$ the structural properties given in Table~\ref{tab:EdjvetWilliams} are determined purely by the (A),(B),(C) conditions, and not by the value of $n$ (and this is not the case for $n=18$). In this section we analyse the groups $\Gamma_n(k,l)$ for $n\geq 19$, the cases $n\leq 18$ being covered by the results of Section~\ref{sec:n<19}.

Let $V=\{(a,b,c,d)\ |\ a,b,c,d\in \{T,F\}\}$. We shall write elements of $V$ either as $(a,b,c,d)$ or as $abcd$. For $v=(a,b,c,d)\in V$, let
\begin{alignat*}{1}
S^v(n)
&=S^{(a,b,c,d)}(n)\\
&=\onetwoset{\Gamma_n(k,l)}{1\leq k,l< n, k\not=l, (n,k,l)=1, A(n,k,l)=a,}{B(n,k,l)=b, C(n,k,l)=c, D(n,k,l)=d}
\end{alignat*}
(where isomorphic groups are identified) and let $f^{(a,b,c,d)}(n)=|S^{(a,b,c,d)}(n)|$. For example
\[S^{FFTF}(6)=\twolineset{\Gamma_6(1,3), \Gamma_6(1,4), \Gamma_6(2,3), \Gamma_6(2,5), \Gamma_6(3,1),\Gamma_6(3,2),}{ \Gamma_6(3,4), \Gamma_6(3,5), \Gamma_6(4,1), \Gamma_6(4,3), \Gamma_6(5,2),\Gamma_6(5,3)}\]
but each of these groups are isomorphic to $\Z_9$, so $S^{FFTF}(6)=\{\Z_9\}$ and $f^{FFTF}(6)=1$. Thus $S(n)=\bigcup_{v\in V} S^v(n)$, but note that it is possible to have $S^v(n)\cap S^{v'}(n)\neq \emptyset$ for $v,v'\in V, v \neq v'$. For instance $\Gamma_6(1,2)\cong \Z*\Z$ and $\Gamma_6(1,5)\cong\Z*\Z$ so $\Z*\Z \in S^{TFTT}(6)\cap S^{TTTT}(6)$. We believe that for $n>6$ if $v\neq v'$ then $S^v(n)\cap S^{v'}(n)= \emptyset$ but we have been unable to prove this -- see Proposition~\ref{prop:nonintersection} and Conjecture~\ref{conj:possibleisoms}.

The following theorem gives an expression for $S(n)$ according to the value of $(n,18)$ and reduces the problem of determining the elements of the sets $S(n)$ to that of determining the elements of the sets $S^{FFFF}(n)$ and $S^{TFFF}(n)$. Similarly, it reduces the problem of calculating the value of $f(n)$ to that of calculating $f^{FFFF}(n)$ and $f^{TFFF}(n)$. (For example, part~(a) implies that for $(n,18)=1$ $f(n)=1+f^{FFFF}(n)$.) We use the symbol $\dot{\cup}$ to denote disjoint union (i.e.\,writing $A\dot{\cup} B$ implies that $A\cap B=\emptyset$).

\begin{theorem}\label{thm:ABCDsets}
Let $n\geq 19$ and, when $3|n$, let $\gamma=(2^{n/3}-(-1)^{n/3})/3$ and let $B$ be the metacyclic group $B((2^n-(-1)^n)/3,3,2^{2n/3},1)$.
\begin{itemize}
  \item[(a)] If $(n,18)=1$ then 
  \(S(n)=\{\Z_3\} \dot{\cup}  S^{FFFF}(n).\)
  \item[(b)] Suppose $(n,18)=2$.
  \begin{itemize}
    \item [(i)] If $\Gamma_n(1,n/2-1)\not \in S^{FFFF}(n)$, then
    \[S(n)=\{\Z_3\} \dot{\cup} \{\Gamma_n(1,n/2-1)\} \dot{\cup}\,S^{FFFF}(n);\]
    \item [(ii)] if $\Gamma_n(1,n/2-1) \in S^{FFFF}(n)$, then
    \[S(n)=\{\Z_3\} \dot{\cup} S^{FFFF}(n).\]
  \end{itemize}
  \item[(c)] If $(n,18)=3$ then
  \[ S(n)=\{\Z*\Z\} \dot{\cup} \{B\} \dot{\cup}\{\Z*\Z*\Z_{\gamma}\} \dot{\cup}  S^{TFFF}(n)\dot{\cup} S^{FFFF}(n).\]
  \item [(d)] Suppose $(n,18)=6$.
  \begin{itemize}
    \item[(i)] If $\Gamma_n(1,n/2-1)\not \in S^{TFFF}(n)$, then
    \begin{alignat*}{1}
    S(n)&=\{\Z*\Z\} \dot{\cup} \{B\} \dot{\cup} \{\Z*\Z*\Z_\gamma \} \dot{\cup} \{\Gamma_n(1,n/2-1)\}\\
        &\qquad \dot{\cup}  S^{TFFF}(n)\dot{\cup} S^{FFFF}(n);
    \end{alignat*}
    \item[(ii)] if $\Gamma_n(1,n/2-1) \in S^{TFFF}(n)$, then
  \[ S(n)=\{\Z*\Z\} \dot{\cup} \{B\} \dot{\cup} \{\Z*\Z*\Z_\gamma\} \dot{\cup}  S^{TFFF}(n)\dot{\cup} S^{FFFF}(n).\]
  \end{itemize}

  \item[(e)] If $(n,18)=9$ then
  \[ S(n)=\{\Z*\Z\} \dot{\cup} \{B\} \dot{\cup}  S^{TFFF}(n)\dot{\cup} S^{FFFF}(n).\]
  \item[(f)] Suppose $(n,18)=18$.
  \begin{itemize}
    \item[(i)] If $\Gamma_n(1,n/2-1)\not \in S^{TFFF}(n)$, then
 \[S(n)=\{\Z*\Z\} \dot{\cup} \{B\} \dot{\cup} \{\Gamma_n(1,n/2-1)\}\dot{\cup}  S^{TFFF}(n)\dot{\cup} S^{FFFF}(n);\]
    \item[(ii)] if $\Gamma_n(1,n/2-1)\in S^{TFFF}(n)$, then
 \[S(n)=\{\Z*\Z\} \dot{\cup} \{B\} \dot{\cup} S^{TFFF}(n)\dot{\cup} S^{FFFF}(n).\]
  \end{itemize}
\end{itemize}
\end{theorem}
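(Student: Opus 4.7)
First I would establish a short list of logical implications among (A), (B), (C), (D) that depend only on $\gcd(n,6)$. By inspection (B) always implies (D). When $n$ is odd, $2x \equiv 0 \pmod n$ forces $x \equiv 0 \pmod n$, so (D) also implies (B). When $\gcd(n,3) = 1$, both (A) and (C) must fail (for (C), $n \mid 3k$ with $\gcd(n,3)=1$ forces $n \mid k$, impossible for $1 \leq k < n$). When $3 \mid n$, (B) implies (A): each of $k+l$, $2l-k$, $2k-l$ is congruent to $\pm(k+l) \pmod 3$, so if $n$ (hence $3$) divides one of them, then $3 \mid (k+l)$. I would also record two uniform facts: by Lemma~\ref{lem:FFFT=GN(1,n/2-1)}, any triple with (D) true and (B) false produces $\Gamma_n(1,n/2-1)$; and by Table~\ref{tab:EdjvetWilliams}, the isomorphism type of $\Gamma_n(k,l)$ is determined already by $(A,B,C)$.

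With these preparations, for each value $(n,18) \in \{1,2,3,6,9,18\}$ the implications cut the $16$ tuples $(A,B,C,D)$ down to a short list of realisable ones; Table~\ref{tab:EdjvetWilliams} together with Lemma~\ref{lem:FFFT=GN(1,n/2-1)} then identifies the corresponding $S^v(n)$, and assembling these gives the decomposition stated in each part. An additional step is needed for parts (e) and (f), where I must verify $S^{TFT?}(n) = \emptyset$ (accounting for the absence of $\{\Z \ast \Z \ast \Z_\gamma\}$). For this, condition (C) forces some $x \in \{k, l, l-k\}$ with $(n/3) \mid x$; since $9 \mid n$ we have $3 \mid n/3$ and hence $3 \mid x$; combined with (A), namely $3 \mid (k+l)$, one obtains $3 \mid \gcd(n,k,l)$, contradicting the standing hypothesis.

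Disjointness of the named singletons from each other is routine using abelianisation and finiteness: $\Z_3$ is finite; $B$ is finite non-abelian; $\Z\ast\Z$ has abelianisation $\Z^2$ and $\Z\ast\Z\ast\Z_\gamma$ has $\Z^2\oplus\Z_\gamma$, with $\gamma>1$ for $n \geq 21$. To separate these from the generic sets, I would use: asphericity of $P_n(k,l)$ for $(F,F,F)$ and $(T,F,F)$ parameters~\cite{EdjvetWilliams}, which by~\cite{Trotter} gives $\mathrm{def}(\Gamma_n(k,l))=0$, whereas $\Z\ast\Z$ and $\Z\ast\Z\ast\Z_\gamma$ have deficiency $2$; and Theorem~\ref{thm:betti}, which separates $S^{FFFF}(n)$ (Betti number $0$) from $S^{TFFF}(n)$ (Betti number $2$). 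In parts (b) and (d), $\Gamma_n(1,n/2-1)$ falls in the same Table row as $S^{FFFF}(n)$ (when $3 \nmid n$) or $S^{TFFF}(n)$ (when $3 \mid n$), so the above invariants do not separate it, and the statement must bifurcate. The main obstacle I anticipate is part (f), where the theorem asserts disjointness unconditionally: this requires a finer isomorphism invariant of $\Gamma_n(1,n/2-1)^{\mathrm{ab}}$ -- for instance, the minimum number of generators (via Corollary~\ref{cor:Gamman1n/2-1n=8mod16} when $(n,16) = 8$) together with a dedicated argument for $(n,16) \in \{2, 4, 16\}$ -- that no element of $S^{TFFF}(n)$ can share.
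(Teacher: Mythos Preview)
Your approach is essentially the paper's: it proves Propositions~7.2--7.10 to determine each $S^v(n)$ and then, for the sample case~(b), invokes the invariants of Table~\ref{tab:EdjvetWilliams} (finiteness, deficiency, abelianisation) to get pairwise disjointness, exactly as you outline. Your logical reductions (``(B)$\Rightarrow$(D)'', ``$n$ odd $\Rightarrow$ (D)$\Leftrightarrow$(B)'', ``$3\nmid n\Rightarrow$ (A),(C) fail'', ``$3\mid n$ and (B) $\Rightarrow$ (A)'', and the $9\mid n$ argument ruling out $S^{TFTF}$) reproduce the content of those propositions in compressed form.

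Your worry about part~(f) is well placed, and the paper does \emph{not} supply the finer invariant you are looking for. The paper proves only part~(b) in detail and says ``all parts proceed in the same way''; but the same method, applied to~(f), would leave the disjointness of $\{\Gamma_n(1,n/2-1)\}$ from $S^{TFFF}(n)$ unresolved for exactly the reason you give (both sit in the $(T,F,F)$ row of Table~\ref{tab:EdjvetWilliams}). Indeed, the paper itself acknowledges this immediately after the theorem: the question of whether $\Gamma_n(1,n/2-1)\in S^{TFFF}(n)$ for $n\equiv 0\pmod 6$ is posed as Conjecture~\ref{conj:possibleisoms}(a), verified only computationally for $n\leq 500$. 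So the unconditional $\dot{\cup}$ in part~(f) is not established by the paper's argument either; consistency with parts~(b) and~(d) would have called for a bifurcation there as well. You should not expect to close this gap with the tools at hand---treat it as the paper does, or note that~(f) as stated presumes Conjecture~\ref{conj:possibleisoms}(a) for $(n,18)=18$.
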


We prove Theorem~\ref{thm:ABCDsets} through a series of propositions that consider the sets $S^v(n)$ for each
\(v\in V=\{(a,b,c,d)\ |\ a,b,c,d\in \{T,F\}\}\).

\begin{proposition}\label{prop:eightemptyset}
Let $n\geq 7$. If either $v\in \{(F,T,T,F),(T,T,T,F)$,  \linebreak $(T,T,F,F), (F,T,F,F), (T,T,T,T), (F,F,T,T), (F,T,T,T)\}$ or $n\neq 12$ and $v=(T,F,T,T)$ then $S^v(n)=\emptyset$.
\end{proposition}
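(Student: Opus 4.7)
The plan is to split the eight listed elements of $V$ into three groups and handle each by elementary congruence arguments.

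First, I would observe that $B(n,k,l)=T$ automatically implies $D(n,k,l)=T$: if $n\mid x$ for some $x\in\{k+l,\,2l-k,\,2k-l\}$, then $n\mid 2x$. Hence no $(n,k,l)$ can satisfy $(B,D)=(T,F)$, which immediately eliminates the four cases $(F,T,T,F)$, $(T,T,T,F)$, $(T,T,F,F)$, $(F,T,F,F)$ for every $n$, without further work.

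Next I would treat the two cases with $B=C=T$, namely $(T,T,T,T)$ and $(F,T,T,T)$. Condition $B$ gives $n\mid x$ for some $x\in\{k+l,\,2l-k,\,2k-l\}$ and $C$ gives $n\mid 3y$ for some $y\in\{k,\,l,\,l-k\}$, yielding $3\times 3=9$ pairs of linear congruences. In each, I would solve for $(k,l)$ modulo $n$; every solution with $1\le k,l< n$ and $k\neq l$ turns out to satisfy $(n,k,l)\ge n/6$, so the hypothesis $(n,k,l)=1$ forces $n\le 6$. These sets are therefore empty for all $n>6$, and in particular for $n>12$.

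The bulk of the work is in the remaining two cases $(F,F,T,T)$ and $(T,F,T,T)$, both with $B=F,\,C=T,\,D=T$. Two preliminary observations simplify matters: (i) $C=T$ forces $3\mid n$, because if $\gcd(3,n)=1$ then $n\mid 3y$ would give $n\mid y$, which is impossible for $y\in\{k,\,l,\,l-k\}$ since $1\le|y|\le n-1$ (using $k\neq l$ for the third choice); and (ii) the combination $B=F,\,D=T$ forces $n$ to be even with $x\equiv n/2\pmod n$ for some $x\in\{k+l,\,2l-k,\,2k-l\}$. Hence $6\mid n$. Running through the $3\times 3$ sub-cases now, with $x\equiv n/2$ rather than $x\equiv 0$, each gives solutions satisfying $(n,k,l)\ge n/12$, so the coprimality hypothesis forces $n\le 12$.

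The main obstacle is purely organisational: managing the $9+9$ sub-cases without redundancy. The delicate point, in the final group, is that the congruence $x\equiv n/2\pmod n$ admits two lifts in the range $[2,\,2n-2]$, namely $x=n/2$ and $x=3n/2$, and one must allow both to obtain the sharp $n=12$ bound (realised for instance by $\Gamma_{12}(1,5)\in S^{TFTT}(12)$). Overlooking the large lift would incorrectly yield the weaker bound $n\le 6$.
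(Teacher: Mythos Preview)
Your proof is correct and rests on the same underlying idea as the paper's---elementary congruence chasing---but your three-part organisation is more laborious than necessary. The paper dispatches all eight cases in just two strokes. First, $B=T\Rightarrow D=T$ kills the four cases with $(B,D)=(T,F)$, exactly as in your Part~1. Second, for the remaining four cases $(*,*,T,T)$ the paper argues directly from $C=T$ and $D=T$ together, \emph{ignoring $B$ entirely}: one gets $6\mid n$, and then $n/6$ divides $(n,k,l)$ (or $n/12$ does, when $4\mid n$), forcing $n\in\{6,12\}$. Your separate treatment of $(B,C)=(T,T)$ in Part~2 and $(B,C,D)=(F,T,T)$ in Part~3 works, but doubles the sub-case count to eighteen for no gain: in Part~2 the hypothesis $B=T$ is stronger than $D=T$ but the extra strength (yielding $n\le 6$ rather than $n\le 12$) is not needed, and in Part~3 the hypothesis $B=F$ plays no role beyond forcing $n$ even, which $D=T$ together with $B=F$ indeed gives---but the paper gets the same mileage from $D=T$ alone once one is already in the $4\mid n$ branch.

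Your closing remark about the ``two lifts'' $x=n/2$ and $x=3n/2$ is off the mark. All quantities are being analysed modulo $n$, and $(n,k,l)$ depends only on the residue classes of $k$ and $l$; the two integer values of $x$ lie in the same residue class and lead to identical bounds. The sharpness of $n\le 12$ (as opposed to $n\le 6$) arises not from any lift issue but from those sub-cases---for instance $n/3\mid (l-k)$ combined with $k+l\equiv n/2$---which force $4\mid n$ before admitting a solution, and then only guarantee $n/12\mid (n,k,l)$ rather than $n/6\mid(n,k,l)$. Your own example $\Gamma_{12}(1,5)$ already has $k+l=6=n/2$, the ``small'' lift, so overlooking the large one would not in fact change the bound.
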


\begin{proof}
  If condition (B) holds then (D) also holds, so $S^{aTbF}(n)=\emptyset$ for all $n$, $a,b\in \{T,F\}$.
  If (C) and (D) hold then $6|n$ and $n/6$ divides $(n,k,l)$ when $n\equiv 2$~mod~$4$ and $n/12$ divides $(n,k,l)$ when $n\equiv 0$~mod~$4$. The definition of $S^v(n)$ requires $(n,k,l)=1$ so $n=6$ or $12$. We have $n\neq 6$ by hypothesis, and if $n=12$ it is easy to check that $S^{FFTT}(n)=S^{FTTT}(n)=\emptyset$, and the proof is complete.
\end{proof}

\begin{proposition}\label{prop:TTFT}
Let $n\geq 9$. Then
\[
 S^{TTFT}(n)=
 \begin{cases}
   \{\Z*\Z\} &\mathrm{if}~n\equiv 0~\mathrm{mod}~3,\\
   \emptyset & \mathrm{otherwise}.
 \end{cases}
\]
\end{proposition}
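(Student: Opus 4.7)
The plan is to split into the two regimes $3\nmid n$ and $3\mid n$, using Table~\ref{tab:EdjvetWilliams} to identify the isomorphism type in the $(T,T,F)$ row and exhibiting a single explicit witness in the positive case.

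First, I would dispose of the case $3\nmid n$. Here condition~(A) can never hold because the defining congruence $n\equiv 0$~mod~$3$ fails. Thus no triple $(n,k,l)$ satisfying the standing hypotheses can have $A(n,k,l)=T$, so $S^{(T,T,F,T)}(n)=\emptyset$, as desired.

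Next, assume $3\mid n$ (so $n\geq 9$, $n\equiv 0$~mod~$3$). The upper bound $S^{(T,T,F,T)}(n)\subseteq \{\Z*\Z\}$ is immediate from line~7 of Table~\ref{tab:EdjvetWilliams}: any $\Gamma_n(k,l)$ with $(A,B,C)=(T,T,F)$ is isomorphic to $\Z*\Z$, and membership in $S^{(T,T,F,T)}(n)$ in particular requires $(A,B,C)=(T,T,F)$. For the reverse inclusion I would exhibit a single witness, namely $(k,l)=(1,n-1)$. Then $(n,k,l)=1$ and $k\neq l$, so the standing hypotheses hold. Moreover $k+l=n$, which instantly gives $B(n,k,l)=T$, hence also $D(n,k,l)=T$, and since $3\mid n$ also $A(n,k,l)=T$. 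For condition~(C) one just checks that $3k=3$, $3l\equiv -3$~mod~$n$, and $3(l-k)\equiv -6$~mod~$n$; the assumption $n\geq 9$ ensures that none of $3,-3,-6$ is congruent to $0$ mod~$n$, so $C(n,k,l)=F$. Hence $\Gamma_n(1,n-1)\in S^{(T,T,F,T)}(n)$, and by Table~\ref{tab:EdjvetWilliams} this group is $\Z*\Z$.

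There is no real obstacle: the argument is essentially a bookkeeping check. The only subtle point is the restriction $n\geq 9$, which is exactly what is needed to rule out the degenerate small cases (e.g.\ $n=6$, where $3(l-k)\equiv 0$~mod~$n$ for the chosen witness and condition~(C) would inadvertently hold). The inclusion $S^{(T,T,F,T)}(n)\subseteq\{\Z*\Z\}$ rests entirely on Table~\ref{tab:EdjvetWilliams}, so the whole proof reduces to verifying the four conditions (A),(B),(C),(D) for the explicit pair $(1,n-1)$.
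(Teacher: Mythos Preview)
Your proof is correct and follows essentially the same approach as the paper: both arguments dismiss the case $3\nmid n$ by noting (A) fails, invoke the $(T,T,F)$ row of Table~\ref{tab:EdjvetWilliams} (equivalently \cite[Lemma~2.4]{EdjvetWilliams}) for the inclusion $S^{(T,T,F,T)}(n)\subseteq\{\Z*\Z\}$, and then exhibit an explicit witness to show the set is nonempty. The only difference is cosmetic: the paper uses the witness $(k,l)=(1,2)$ (where (B) holds via $2k-l\equiv 0$), whereas you use $(k,l)=(1,n-1)$ (where (B) holds via $k+l\equiv 0$).
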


\begin{proof}
If $n\not \equiv 0$~mod~$3$ then (A) does not hold so $S^{TTFT}=\emptyset$ so suppose $n\equiv 0$~mod~$3$. By~\cite[Lemma~2.4]{EdjvetWilliams} if $\Gamma_n(k,l)\in S^{TTFT}$ then  $\Gamma_n(k,l)\cong \Z*\Z$ and since $\Gamma_n(1,2)\in S^{TTFT}$ we have $S^{TTFT}=\{\Z*\Z\}$.
\end{proof}

\begin{proposition}\label{prop:FTFT}
Let $n\geq 3$. Then
\[
 S^{FTFT}(n)=
 \begin{cases}
   \emptyset &\mathrm{if}~n\equiv 0~\mathrm{mod}~3,\\
   \{\Z_3\} & \mathrm{otherwise}.
 \end{cases}
\]
\end{proposition}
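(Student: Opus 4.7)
The plan is first to reduce the assertion to Table~\ref{tab:EdjvetWilliams} by a one-line observation: each clause of~(B) immediately implies the corresponding clause of~(D), so $B(n,k,l)=T$ forces $D(n,k,l)=T$. Hence $S^{FTFT}(n)$ coincides with the set of groups $\Gamma_n(k,l)$ (under the standing hypotheses) satisfying $A(n,k,l)=F$, $B(n,k,l)=T$ and $C(n,k,l)=F$, and by the (F,T,F) row of Table~\ref{tab:EdjvetWilliams} every such group is isomorphic to $\Z_3$. Thus the proposition will follow once I decide, for each $n$, whether such a parameter triple exists.

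For the case $n\equiv 0~\mathrm{mod}~3$, I would show that (B) forces (A), which makes $S^{FTFT}(n)$ empty. Each clause of~(B) is a congruence modulo $n$, and hence also modulo $3$: the clause $k+l\equiv 0$ gives $k+l\equiv 0~\mathrm{mod}~3$ directly; the clause $2l-k\equiv 0$ gives $k\equiv 2l\equiv -l~\mathrm{mod}~3$, hence $k+l\equiv 0~\mathrm{mod}~3$; and the clause $2k-l\equiv 0$ is symmetric. Combined with $3\mid n$ this is exactly condition~(A), contradicting $A(n,k,l)=F$.

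For the case $n\not\equiv 0~\mathrm{mod}~3$, I would exhibit the explicit pair $(k,l)=(1,n-1)$ and verify the four conditions. The hypotheses $k\neq l$ (using $n\geq 3$) and $(n,k,l)=1$ are immediate, and $k+l=n$ gives $B(n,k,l)=T$. Condition~(A) fails automatically because $3\nmid n$. Since $(n,3)=1$, the three clauses of~(C) reduce to $l\equiv 0$, $k\equiv 0$, or $l-k\equiv 0~\mathrm{mod}~n$, each of which fails by the standing hypotheses, so $C(n,k,l)=F$. Therefore $\Gamma_n(1,n-1)\in S^{FTFT}(n)$, and by the first paragraph this set equals $\{\Z_3\}$.

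The argument is essentially bookkeeping against Table~\ref{tab:EdjvetWilliams}, and I do not anticipate any real obstacle; the only step requiring even minor care is the three-clause modular verification that (B) implies (A) when $3\mid n$.
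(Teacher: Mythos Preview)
Your proof is correct and follows essentially the same route as the paper: show that when $3\mid n$ condition (B) forces (A) (so $S^{FTFT}(n)=\emptyset$), and otherwise invoke the $(F,T,F)$ row of Table~\ref{tab:EdjvetWilliams} together with an explicit witness. The only cosmetic difference is your choice of witness $(k,l)=(1,n-1)$ in place of the paper's $(1,2)$; both satisfy the required conditions for $n\geq 3$ with $3\nmid n$, and your preliminary observation that (B) implies (D) is just a slightly more explicit way of justifying why the $(F,T,F)$ row applies.
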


\begin{proof}
If $n\equiv 0$~mod~$3$ then it follows that if (B) holds then (A) holds, and hence $S^{FTFT}=\emptyset$ so suppose $n\not \equiv 0$~mod~$3$. By~\cite[Lemma~2.4]{EdjvetWilliams} if $\Gamma_n(k,l)\in S^{FTFT}$ then  $\Gamma_n(k,l)\cong \Z_3$ and since $\Gamma_n(1,2)\in S^{FTFT}$ we have $S^{FTFT}=\{\Z_3\}$.
\end{proof}

\begin{proposition}\label{prop:FFTF}
Let $n\geq 6$. Then
\[
 S^{FFTF}(n)=
\begin{cases}
   \{B((2^n-(-1)^n)/3,3,2^{2n/3},1)\} &\mathrm{if}~n\equiv 0~\mathrm{mod}~3,\\
   \emptyset & \mathrm{otherwise}.
\end{cases}
\]
\end{proposition}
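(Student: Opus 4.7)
The plan is to mirror the arguments for Propositions~\ref{prop:TTFT} and~\ref{prop:FTFT}, splitting according to whether $3 \mid n$. If $3 \nmid n$, then $\gcd(n, 3) = 1$, so each of the congruences $3k \equiv 0$, $3l \equiv 0$, $3(l - k) \equiv 0 \pmod{n}$ defining condition~(C) collapses to $k \equiv 0$, $l \equiv 0$, or $k \equiv l \pmod{n}$, each of which is ruled out by $1 \leq k, l < n$ and $k \neq l$. Hence (C) never holds and $S^{FFTF}(n) = \emptyset$.

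Suppose $3 \mid n$ and write $n = 3m$. By Table~\ref{tab:EdjvetWilliams}, any $\Gamma_n(k, l)$ with $(A, B, C) = (F, F, T)$ is isomorphic to the metacyclic group $B((2^n - (-1)^n)/3, 3, 2^{2n/3}, 1)$, the precise identification being given by~\cite[Corollary~D]{BogleyWilliams17}. It therefore suffices to exhibit a single pair $(k, l)$ with $\gcd(n, k, l) = 1$, $k \neq l$, and $(A, B, C, D) = (F, F, T, F)$. I would take $(k, l) = (1, m)$ when $m \not\equiv 2 \pmod{3}$ and $(k, l) = (1, m + 1)$ when $m \equiv 2 \pmod{3}$. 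Then $\gcd(n, k, l) = 1$ and $k \neq l$ are immediate. The sum $k + l$ is $m + 1$ or $m + 2$ respectively, neither divisible by~$3$, so $A = F$; in the first case $3l = n$ and in the second $3(l - k) = n$, so $C = T$; and a routine reduction modulo $n = 3m$ shows that $k + l$, $2l - k$, $2k - l$ and their doubles are each nonzero mod~$n$, giving $B = D = F$.

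Combining the two cases yields the claimed expression for $S^{FFTF}(n)$. The main technical obstacle is the arithmetic verification of $B = D = F$ for the witness pair, particularly at small~$m$: for instance at $m = 2$ the naive choice $(k, l) = (1, m) = (1, 2)$ would have $2(2k - l) = 0$ and hence $D = T$, which is precisely why one switches to $(k, l) = (1, m + 1) = (1, 3)$ when $m \equiv 2 \pmod{3}$. Beyond these small-case collapses the verifications are bookkeeping.
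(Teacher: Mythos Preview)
Your argument is correct and follows the same template as the paper: show that (C) cannot hold when $3\nmid n$, invoke the identification from Table~\ref{tab:EdjvetWilliams} (equivalently \cite[Corollary~D]{BogleyWilliams17}) to see that every element of $S^{FFTF}(n)$ is the stated metacyclic group, and then exhibit an explicit witness. The only difference is the witness: the paper takes the single pair $(k,l)=(n/3,\,1+2n/3)$, for which $k+l\equiv 1$, $2l-k\equiv 2$, $2k-l\equiv -1\pmod n$, so $A=B=D=F$ and $3k\equiv 0\pmod n$ gives $C=T$ uniformly for all $n\geq 6$; your choice $(1,m)$ or $(1,m+1)$ works but forces a case split on $m\bmod 3$. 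As a minor aside, your stated reason for rejecting $(1,m)$ when $m\equiv 2\pmod 3$ is not quite the point: for every such $m$ one has $k+l=m+1\equiv 0\pmod 3$, so it is $A=T$ (not merely $D=T$ at $m=2$) that forces the switch.
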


\begin{proof}
If $n\not \equiv 0$~mod~$3$ then (C) does not hold so $S^{FFTF}(n)=\emptyset$ so suppose $n\equiv 0$~mod~$3$. If (C) holds, (A) does not hold, and $k\not \equiv 0$, $l\not \equiv 0$, $k\not \equiv l$~mod~$n$ then~\cite[Corollary~D]{BogleyWilliams17} implies that $\Gamma_n(k,l)\cong B((2^n-(-1)^n)/3,3,2^{2n/3},1)$ and since $\Gamma_n(n/3,1+2n/3) \in S^{FFTF}(n)$ we have $S^{FFTF}(n)=\{B((2^n-(-1)^n)/3,3,2^{2n/3},1)\}$.
\end{proof}

\begin{proposition}\label{prop:TFTF}
Let $n\geq 3$ and when $3|n$ let $\gamma = (2^{n/3} - (-1)^{n/3})/3$. Then
\[
 S^{TFTF}(n)=
\begin{cases}
   \{\Z*\Z*\Z_\gamma\} &\mathrm{if}~n\equiv \pm 3~\mathrm{mod}~9,\\
   \emptyset & \mathrm{otherwise}.
\end{cases}
\]
\end{proposition}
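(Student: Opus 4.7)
The plan is to combine Table~\ref{tab:EdjvetWilliams} (which pins down the group isomorphism type as soon as $(A,B,C)=(T,F,T)$) with Lemma~\ref{lem:gammaiso} (which reduces the parameters) in order to convert the question into a purely arithmetic one about the existence of a suitable residue $l$.

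If $3\nmid n$ then (A) fails and $S^{TFTF}(n)=\emptyset$, so from here on write $n=3m$. Every group $\Gamma_n(k,l)\in S^{TFTF}(n)$ satisfies $(A,B,C)=(T,F,T)$, so by the corresponding row of Table~\ref{tab:EdjvetWilliams} it is isomorphic to $\Z*\Z*\Z_\gamma$. Hence $S^{TFTF}(n)\subseteq\{\Z*\Z*\Z_\gamma\}$, and the real task is to determine when $S^{TFTF}(n)$ is non-empty.

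Next I would reduce the parameters. Condition~(C) says that one of $n\mid 3k$, $n\mid 3l$, $n\mid 3(l-k)$ holds. A direct check (writing down the transformed triples $(k',l')$ in each part of Lemma~\ref{lem:gammaiso} and inspecting the sets $\{k'+l',\,2l'-k',\,2k'-l'\}$, $\{3k',\,3l',\,3(l'-k')\}$ modulo $n$) shows that all four of (A)--(D) are invariant under those isomorphisms. Applying parts~(ii) and~(iii) as appropriate, I may assume $n\mid 3k$, so $k\in\{n/3,\,2n/3\}$; and then Lemma~\ref{lem:gammaiso}(v) with $\alpha=-1$ (note $(-1,n)=1$) reduces to $k=n/3=:m$. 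Under this normalisation, $(n,k,l)=(m,l)$ and (A) becomes the single congruence $l\equiv -m\pmod 3$.

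The key observation for the ``otherwise'' direction is that when $9\mid n$ we have $3\mid m$, so (A) forces $3\mid l$, contradicting $(m,l)=1$; thus $S^{TFTF}(n)=\emptyset$. Together with the $3\nmid n$ case this handles every $n\not\equiv\pm 3\pmod 9$.

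For the converse direction $3\|n$, I would exhibit an explicit pair $(k,l)=(m,l)$ in $S^{TFTF}(n)$. Observe first that $2x\equiv 0\pmod n$ implies $x\equiv 0\pmod{n/(n,2)}$, so (D)$=$F already implies (B)$=$F: only (D)$=$F needs to be verified, which forbids $l$ from at most three residue classes modulo $n/(n,2)$ coming from $m+l,\,2l-m,\,2m-l$. Against this, the congruence $l\equiv -m\pmod 3$ together with $(m,l)=1$ and $l\ne m$ leaves a large family of candidates, and a short count shows the forbidden classes cannot exhaust them (once $n$ is beyond the small exceptional values). Producing any single such $l$ then gives a group in $S^{TFTF}(n)$, which by the first paragraph must be $\Z*\Z*\Z_\gamma$, completing the proof. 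The main technical point—and the step I expect to be the main obstacle—is the case where $n$ is even: then $n/(n,2)=n/2$, each forbidden congruence may contribute two residues modulo $n$, and the forbidden set can conspire with the congruence (A) imposed modulo~$3$ unless handled carefully.
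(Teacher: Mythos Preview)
Your overall plan is sound and matches the paper's: use Table~\ref{tab:EdjvetWilliams} to pin down the isomorphism type, show emptiness when $9\mid n$ via a divisibility contradiction, and exhibit a witness when $n\equiv\pm 3\pmod 9$. The emptiness argument is correct, though more elaborate than necessary: you first reduce via Lemma~\ref{lem:gammaiso} to $k=m=n/3$ and then derive $3\mid l$, whereas the paper argues directly that (A) gives $3\mid k+l$ while (C) together with $9\mid n$ forces $3$ to divide one of $k$, $l$, $k-l$; in every case $3\mid\gcd(k,l)$, contradicting $(n,k,l)=1$. No normalisation of $(k,l)$ is needed, and in particular you need not verify that (A)--(D) are invariant under Lemma~\ref{lem:gammaiso}.

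The genuine gap is in the non-emptiness step. Having reduced to $k=m$, you propose a counting argument to show that the congruence from (A), the coprimality $(m,l)=1$, and the finitely many residues forbidden by (D) cannot exhaust all candidates for $l$; you leave this count unfinished and explicitly flag the even-$n$ case as the main obstacle. The paper sidesteps all of this by simply writing down an explicit witness with $k=1$ (namely $\Gamma_n(1,n/3+1)$) and asserting that the four conditions hold. Taking $k=1$ rather than $k=m$ is the simplification you are missing: it makes $(n,k,l)=1$ automatic and turns the existence question into a short direct verification rather than a density estimate. Your normalisation $k=m$ is precisely what creates the awkward interaction between $(m,l)=1$ and the congruence conditions that you identify as an obstacle. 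I would drop the counting approach entirely, take $k=1$, choose $l$ equal to one of $n/3\pm 1$ (so that (C) holds via $3(l-k)\equiv 0$), and verify (A), (B)${}=F$, (D)${}=F$ by hand.
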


\begin{proof}
If (A) holds then $n\equiv 0$~mod~3, so if $S^{TFTF}\neq \emptyset$ we have $n\equiv 0$~mod~$3$. If $n\equiv 3$ or $6$ mod~$9$ then $\Gamma_n(1,n/3+1)\in S^{TFTF}(n)$, and by~\cite[Lemma~2.5]{EdjvetWilliams} $S^{TFTF}(n)=\{\Z*\Z*\Z_\gamma\}$.
Suppose $n\equiv 0$~mod~$9$, $(n,k,l)=1$ and (A) and (C) both hold. Since (A) holds $3$ divides $k$ and $l$, and since (C) holds $n/3$ divides $k$ or $l$ or $k-l$, and hence $3$ divides $k$ or $l$ or $k-l$, a contradiction.
\end{proof}

\begin{proposition}\label{prop:FFFT}
Let $n\geq 3$. Then
\[
 S^{FFFT}(n)=
\begin{cases}
   \{\Gamma_n(1,n/2-1)\} &\mathrm{if}~n\equiv \pm 2~\mathrm{mod}~6,\\
   \emptyset & \mathrm{otherwise}.
\end{cases}
\]
\end{proposition}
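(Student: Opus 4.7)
The plan is to dispose of the two ``empty'' cases first and then produce an explicit witness in the remaining case. Since $S^{FFFT}(n)\subseteq S(n)$ contains at most one group up to isomorphism (namely $\Gamma_n(1,n/2-1)$, by Lemma~\ref{lem:FFFT=GN(1,n/2-1)}), it will suffice to decide for each residue class $n\bmod 6$ whether some admissible pair $(k,l)$ realises the status $(A,B,C,D)=(F,F,F,T)$, and in the positive case to exhibit one such pair.

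First I would handle the odd case. If $n$ is odd then $\gcd(n,2)=1$, so each of the congruences $2(k+l)\equiv 0$, $2(2l-k)\equiv 0$, $2(2k-l)\equiv 0\pmod n$ is equivalent to $k+l\equiv 0$, $2l-k\equiv 0$, $2k-l\equiv 0\pmod n$ respectively. Thus for odd $n$, condition (D) implies condition (B), so no admissible triple satisfies $B=F,\,D=T$, giving $S^{FFFT}(n)=\emptyset$ whenever $n$ is odd.

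Next I would handle $6\mid n$. Suppose $6\mid n$, (D) holds and (B) does not. Then exactly one of $k+l$, $2l-k$, $2k-l$ is congruent to $n/2\pmod n$. Because $6\mid n$ we have $3\mid n/2$, so in all three cases the relevant quantity is $\equiv 0\pmod 3$; a short case-check shows this forces $k+l\equiv 0\pmod 3$ in each case (for instance $2l-k\equiv 0\pmod 3$ gives $k\equiv 2l\equiv -l\pmod 3$). Combined with $3\mid n$ this gives $A=T$, contradicting $A=F$. Hence $S^{FFFT}(n)=\emptyset$ whenever $6\mid n$.

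Finally, for $n\equiv\pm 2\pmod 6$ (so $n$ is even and $3\nmid n$, in particular $n\geq 8$), the plan is to verify that the pair $(k,l)=(1,n/2-1)$ is admissible and realises $(F,F,F,T)$: admissibility and $D=T$ are immediate from $k+l=n/2$, while (A) fails because $3\nmid n$, and (B), (C) fail by direct inspection of the three defining congruences modulo $n$ (for $n\geq 8$ none of $n/2$, $n-3$, $3-n/2$, $3$, $3n/2-3$, $3n/2-6$ is divisible by $n$). Hence $\Gamma_n(1,n/2-1)\in S^{FFFT}(n)$, and by Lemma~\ref{lem:FFFT=GN(1,n/2-1)} every element of $S^{FFFT}(n)$ is isomorphic to this group, giving $S^{FFFT}(n)=\{\Gamma_n(1,n/2-1)\}$. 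The only subtle step is the $6\mid n$ case above, where the three subcases of (D) all need to be checked to force $k+l\equiv 0\pmod 3$; the rest reduces to straightforward congruence bookkeeping together with a direct appeal to Lemma~\ref{lem:FFFT=GN(1,n/2-1)}.
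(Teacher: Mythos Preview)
Your proof is correct and follows exactly the paper's approach: eliminate odd $n$ (where (D) would force (B)), then eliminate $6\mid n$ (where (D) together with $\neg$(B) forces (A)---you supply the congruence check that the paper merely asserts), and finally invoke Lemma~\ref{lem:FFFT=GN(1,n/2-1)} together with the explicit witness $(1,n/2-1)$, whose membership in $S^{FFFT}(n)$ you verify while the paper simply states it. The only slip is the parenthetical ``in particular $n\geq 8$'': the value $n=4$ also satisfies $n\equiv -2\pmod 6$, and there the witness degenerates to $k=l=1$ so that in fact $S^{FFFT}(4)=\emptyset$---an edge case that the paper's proof does not address either.
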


\begin{proof}
If $n$ is odd and (D) holds then (B) also holds so $S^{FFFT}(n)=\emptyset$, so assume $n$ is even.
If $n\equiv 0$~mod~$6$ and (D) holds then it follows that (A) holds and hence $S^{FFFT}(n)=\emptyset$, so suppose $n\equiv \pm 2$~mod~$6$.
Lemma~\ref{lem:FFFT=GN(1,n/2-1)} then implies that if (D) holds and (B) does not and $(n,k,l)=1$ and $k\not \equiv 0, l\not \equiv 0, k\not \equiv l$~mod~$n$ then $\Gamma_n(k,l)\cong \Gamma_n(1,n/2-1)$ and since $\Gamma_n(1,n/2-1)\in  S^{FFFT}(n)$ we have $S^{FFFT}(n)=\{\Gamma_n(1,n/2-1)\}$.
\end{proof}

\begin{proposition}\label{prop:TFFT}
Let $n\geq 13$. Then
\[
 S^{TFFT}(n)=
\begin{cases}
   \{\Gamma_n(1,n/2-1)\} &\mathrm{if}~n\equiv  0~\mathrm{mod}~6,\\
   \emptyset & \mathrm{otherwise}.
\end{cases}
\]
\end{proposition}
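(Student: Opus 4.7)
The plan is to follow the same three-step template used for Proposition~\ref{prop:FFFT}: first rule out the "otherwise" cases by showing incompatibility of the four conditions; then use Lemma~\ref{lem:FFFT=GN(1,n/2-1)} to reduce any element of $S^{TFFT}(n)$ to $\Gamma_n(1,n/2-1)$; and finally produce an explicit witness to confirm that $\Gamma_n(1,n/2-1)$ actually lies in $S^{TFFT}(n)$ when $6\mid n$ and $n\geq 13$.

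First I would dispose of the cases $n\not\equiv 0\bmod 6$. If $n$ is odd then $\gcd(2,n)=1$, so each of the congruences $2(k+l)\equiv 0$, $2(2l-k)\equiv 0$, $2(2k-l)\equiv 0$ mod $n$ is equivalent to the corresponding unhalved congruence; thus for odd $n$, (D) forces (B), contradicting $B(n,k,l)=F$. If $n$ is even but $3\nmid n$, then $A(n,k,l)=T$ is immediately impossible since (A) requires $3\mid n$. Either way $S^{TFFT}(n)=\emptyset$ unless $6\mid n$.

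Next, assuming $6\mid n$, any $\Gamma_n(k,l)\in S^{TFFT}(n)$ satisfies $B(n,k,l)=F$ and $D(n,k,l)=T$, so Lemma~\ref{lem:FFFT=GN(1,n/2-1)} yields $\Gamma_n(k,l)\cong \Gamma_n(1,n/2-1)$. Hence $S^{TFFT}(n)\subseteq\{\Gamma_n(1,n/2-1)\}$.

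For the reverse inclusion I would check directly that $(k,l)=(1,n/2-1)$ meets all four conditions when $6\mid n$ and $n\geq 13$ (equivalently $n\geq 18$). Clearly $(n,k,l)=1$ and $k\neq l$. For (A): $k+l=n/2$, which is divisible by $3$ exactly because $6\mid n$. For (D): $2(k+l)=n\equiv 0$ mod $n$. For (B) to fail one checks $k+l=n/2\not\equiv 0$, $2l-k=n-3\not\equiv 0$, and $2k-l=3-n/2\not\equiv 0$ mod $n$, all of which hold for $n\geq 18$. For (C) to fail one computes $3k=3$, $3l\equiv n/2-3$, and $3(l-k)\equiv n/2-6$ mod $n$, and none of these vanish for $n\geq 18$ (the borderline exclusions being $n=3,6,12$, each ruled out by $n\geq 13$). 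This establishes $\Gamma_n(1,n/2-1)\in S^{TFFT}(n)$ and completes the proof. There is no real obstacle here; the only subtlety is keeping track of the small-$n$ exclusions that force the hypothesis $n\geq 13$, which is precisely why $n=6$ and $n=12$ are handled separately in Section~\ref{sec:n<19}.
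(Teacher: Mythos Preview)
Your proof is correct and follows essentially the same approach as the paper: rule out the incompatible residues of $n$, invoke Lemma~\ref{lem:FFFT=GN(1,n/2-1)} for the inclusion $S^{TFFT}(n)\subseteq\{\Gamma_n(1,n/2-1)\}$, and exhibit the witness. You are in fact more careful than the paper on two points: your treatment of odd $n$ (arguing that (D) forces (B) when $\gcd(2,n)=1$) is cleaner than the paper's bare assertion, and you explicitly verify that $(1,n/2-1)$ satisfies all four conditions for $n\geq 18$, whereas the paper simply asserts membership and relegates the small-$n$ failures $n=6,12$ to a remark after the proof.
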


\begin{proof}
If $n$ is odd then (D) does not hold so $S^{TFFT}(n)=\emptyset$ so assume $n$ is even. If $n\not \equiv 0$~mod~$3$ then (A) does not hold so again $S^{TFFT}(n)=\emptyset$ so assume $n\equiv 0$~mod~$6$. By Lemma~\ref{lem:FFFT=GN(1,n/2-1)} if (D) holds and (B) does not and $(n,k,l)=1$ and $k\not \equiv 0, l\not \equiv 0, k\not \equiv l$~mod~$n$ then $\Gamma_n(k,l)\cong \Gamma_n(1,n/2-1)$ and since $\Gamma_n(1,n/2-1)\in  S^{TFFT}(n)$ we have $S^{TFFT}(n)=\{\Gamma_n(1,n/2-1)\}$.
\end{proof}

Note that Proposition~\ref{prop:TFFT} would be false for $n<13$ since $S^{TFFT}(6)=S^{TFFT}(12)=\emptyset$.

\begin{proposition}\label{prop:elementofTFFF}
Let $n\geq 19$. If $n\equiv 0$~mod~$3$ then $\Gamma_n(1,5)\in S^{TFFF}(n)$ and if $n \not\equiv 0$~mod~$3$ then $S^{TFFF}(n)=\emptyset$.
\end{proposition}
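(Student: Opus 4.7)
The plan is to prove both directions by direct verification against the defining congruences for the conditions (A),(B),(C),(D).

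For the easy direction, suppose $n \not\equiv 0 \pmod 3$. By definition, condition (A) requires $n \equiv 0 \pmod 3$, so $A(n,k,l) = F$ for every $(k,l)$. Hence there are no groups with $A(n,k,l)=T$, and $S^{TFFF}(n) = \emptyset$ immediately.

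For the main direction, assume $n \equiv 0 \pmod 3$ and $n \geq 19$. The plan is to take the explicit pair $(k,l) = (1,5)$ and check each of the four conditions. The requirements $(n,k,l) = 1$ and $k \neq l$ are immediate, since $k=1$. For (A): $k + l = 6 \equiv 0 \pmod 3$, and $n \equiv 0 \pmod 3$ by hypothesis, so $A(n,1,5) = T$. For the remaining three conditions, I would simply list the nine relevant quantities:
\[
k+l = 6,\ 2l-k = 9,\ 2k-l = -3,\ 3l = 15,\ 3k = 3,\ 3(l-k) = 12,
\]
\[
2(k+l) = 12,\ 2(2l-k) = 18,\ 2(2k-l) = -6.
\]
Each of these is a nonzero integer of absolute value at most $18$, and the hypothesis $n \geq 19$ ensures that none of them is divisible by $n$. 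Thus $B(n,1,5) = F$, $C(n,1,5) = F$, and $D(n,1,5) = F$, so $\Gamma_n(1,5) \in S^{TFFF}(n)$ as required.

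There is essentially no obstacle here: the proof reduces to an arithmetic bookkeeping exercise. The only subtlety worth flagging is the role of the bound $n \geq 19$, which is used precisely to guarantee that the six entries arising in (B) and (C) (whose values lie in $\{3,6,9,12,15\}$ in absolute value) and the three entries arising in (D) (whose values lie in $\{6,12,18\}$ in absolute value) are all strictly less than $n$ and hence not $\equiv 0 \pmod n$.
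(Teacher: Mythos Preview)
Your proof is correct and follows exactly the approach of the paper; the paper's own proof simply asserts ``Clearly $\Gamma_n(1,5)\in S^{TFFF}(n)$'' and notes that (A) fails when $3\nmid n$, whereas you have carried out the explicit arithmetic verification that this word conceals. In particular, your observation that the nine relevant quantities all have absolute value at most $18$ is precisely what makes the bound $n\geq 19$ sharp here, consistent with the paper's remark that the proposition fails for $n=18$.
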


\begin{proof}
Clearly $\Gamma_n(1,5)\in S^{TFFF}(n)$, and if $n \not\equiv 0$~mod~$3$ then (A) does not hold so $S^{TFFF}(n)=\emptyset$.
\end{proof}

In particular, $S^{TFFF}(n)$ is non-empty when $n\equiv 0$~mod~$3$, $n\geq 19$. Note that Proposition~\ref{prop:elementofTFFF} would be false for $n=18$ since $\Gamma_{18}(1,5)\not \in S^{TFFF}(18)$.

\begin{proposition}\label{prop:elementofFFFF}
Let $n\geq 10$. Then $\Gamma_n(1,3)\in S^{FFFF}(n)$ and if $n$ is even then $\Gamma_n(1,n/2)\in S^{FFFF}(n)$.
\end{proposition}

\begin{proof}
Obvious.
\end{proof}

In particular, $S^{FFFF}(n)$ is non-empty when $n\geq 10$. We now turn to the proof of Theorem~\ref{thm:ABCDsets}. All parts proceed in the same way, and so for illustration and brevity we prove only part~(b).

\begin{proof}[Proof of Theorem~\ref{thm:ABCDsets}(b)]
\noindent Suppose $(n,18)=2$.  If $S^v(n)\neq \emptyset$ then Propositions~\ref{prop:eightemptyset}--\ref{prop:elementofFFFF} imply that $v=(F,T,F,T),(F,F,F,T)$ or $(F,F,F,F)$; Proposition~\ref{prop:FTFT} implies that $S^{FTFT}(n)=\{\Z_3\}$ and Proposition~\ref{prop:FFFT} implies that $S^{FFFT}(n)=\{\Gamma_n(1,n/2-1)\}$. With the exception of the pair of sets $S^{FFFT}(n)$ and $S^{FFFF}(n)$, the fact that these sets are pairwise disjoint follows from the invariants given in Table~\ref{tab:EdjvetWilliams}. (Groups in $S^{FFFT}(n)$ and $S^{FFFF}(n)$ are infinite, have finite abelianisation and are of deficiency zero.) Thus $S(n)$ is one of the two sets given in the statement, depending whether or not $\Gamma_n(1,n/2-1)\in S^{FFFF}(n)$.
\end{proof}

Theorem~\ref{thm:ABCDsets}(b),(d),(f) prompts the question as to whether or not, for $n>18$, the group $\Gamma_n(1,n/2-1)$ can be an element of $S^{TFFF}(n)$ (for $n\equiv 0$~mod~6) or of $S^{FFFF}(n)$ (for $n\equiv 2,4$~mod~$6$). Moreover, for $n>6$ we have the following:

\begin{proposition}\label{prop:nonintersection}
Let $v,v'\in V$, $v\neq v'$. If $n>6$ and $S^v(n)\cap S^{v'}(n)\neq \emptyset$ then either:
\begin{itemize}
  \item[(a)] $n\equiv 0$~mod~$6$ and $\Gamma_n(1,n/2-1) \in S^{TFFF}(n)$; or
  \item[(b)] $n\equiv 2,4$~mod~$6$ and $\Gamma_n(1,n/2-1) \in S^{FFFF}(n)$.
\end{itemize}
\end{proposition}

\begin{proof}
Let $v=(a,b,c,d),v'=(a',b',c',d')\in V$ and suppose $S^v(n)\cap S^{v'}(n)\neq \emptyset$. If $(a,b,c)\neq(a',b',c')$ then it follows from the invariants in Table~\ref{tab:EdjvetWilliams} that $S^v(n)\cap S^{v'}(n)=\emptyset$. Thus we may assume $a=a',b=b',c=c',d=T,d'=F$. Then, if $b=T$ we have $S^{v'}(n)=\emptyset$ so we may also assume that $b=F$.

Suppose $c=T$. If $a=F$ then Proposition~\ref{prop:eightemptyset} implies that $S^{v}=\emptyset$; if $a=T$ and $n\neq 12$ then Proposition~\ref{prop:eightemptyset} implies that $S^{v}(n)=\emptyset$; if $a=T$ and $n=12$ then it is routine to check that $S^{v'}(n)=\emptyset$. Thus we may assume that $c=F$. By Propositions~\ref{prop:FFFT} and~\ref{prop:TFFT} if $S^v(n)\cap S^{v'}(n)\neq \emptyset$ then $\Gamma_n(1,n/2-1)\in S^{v'}(n)$ where either $a=T$ and $n\equiv 0$~mod~$6$ or $a=F$ and $n\equiv 2,4$~mod~$6$, as required.
\end{proof}

Based on experiments in GAP that compare groups' abelianisations for $n\leq 500$, we conjecture that neither of the conclusions (a) nor (b) of Proposition~\ref{prop:nonintersection} are possible (which would then imply that $S^v(n)\cap S^{v'}(n)=\emptyset$ for $n>6, v\neq v'$):

\begin{conjecture}\label{conj:possibleisoms}
Let $n\geq 19$.
\begin{itemize}
  \item[(a)] If $n\equiv 0$~mod~$6$ then $\Gamma_n(1,n/2-1) \not \in S^{TFFF}(n)$;
  \item[(b)] if $n\equiv 2,4$~mod~$6$ then $\Gamma_n(1,n/2-1) \not \in S^{FFFF}(n)$.
\end{itemize}
\end{conjecture}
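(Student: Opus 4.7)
The natural strategy, consistent with the computational evidence motivating the conjecture, is to use the abelianisation as an invariant.

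In case (b), when $n\equiv 2,4~\mathrm{mod}~6$, condition (A) fails for every group on both sides, so each abelianisation is finite. By Theorem~\ref{thm:Gamman1n/2-1finiteabelianisation} we have $|\Gamma_n(1,n/2-1)^{\mathrm{ab}}|=3(L_{n/2}+1-(-1)^{n/2})$, while for $\Gamma_n(k,l)\in S^{FFFF}(n)$ formula~(\ref{eq:Gaborder}) gives $|\Gamma_n(k,l)^{\mathrm{ab}}|=|\prod_{j=0}^{n-1}(1+\zeta_n^{jk}+\zeta_n^{jl})|$. The plan is to rule out equality of these two orders by exhibiting a distinguishing prime: one would invoke a primitive divisor theorem of Carmichael type to produce a prime $p$ dividing $L_{n/2}+1-(-1)^{n/2}$ whose multiplicative order modulo $p$ is too large to permit $1+t^k+t^l$ to have a root among the $n$-th roots of unity in characteristic $p$, under the constraints that none of (B),(C),(D) hold.

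In case (a), when $6\mid n$, both sides have Betti number~$2$ by Theorem~\ref{thm:betti}, and the comparison shifts to the finite torsion part $A_0$ in $\Gamma^{\mathrm{ab}}\cong\Z^2\oplus A_0$. The first step would be to adapt the proof of Theorem~\ref{thm:Gamman1n/2-1finiteabelianisation}: factor out the contribution of the roots $\zeta_3,\zeta_3^2$ from the cyclotomic product for $\Gamma_n(1,n/2-1)$ and evaluate the remainder along an even/odd index decomposition, so as to obtain a Lucas-style closed form for $|A_0|$ when $6\mid n$. One would then compare this against the torsion order of $\Gamma_n(k,l)^{\mathrm{ab}}$ for groups in $S^{TFFF}(n)$ and argue non-equality along the lines of case~(b).

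The principal obstacle is that the cyclotomic product $|\prod_{j}(1+\zeta_n^{jk}+\zeta_n^{jl})|$ for arbitrary admissible $(k,l)$ admits no closed form and varies irregularly with the pair; in particular it is conceivable (though unobserved for $n\leq 500$) that for some large $n$ a specific choice of $(k,l)$ in $S^{FFFF}(n)$ or $S^{TFFF}(n)$ produces an accidental coincidence of abelianisation order with $\Gamma_n(1,n/2-1)$. Converting the computational evidence into a proof would therefore require either a uniform upper bound on primes that can divide $|\Gamma_n(k,l)^{\mathrm{ab}}|$ under the (F,F,F,F) or (T,F,F,F) conditions, together with a lower bound on the largest prime factor of $L_{n/2}+1-(-1)^{n/2}$, or else a finer invariant such as $d(\Gamma^{\mathrm{ab}})$ (cf.\ Conjecture~\ref{conj:dGamman}) that distinguishes the two sides on structural grounds rather than by order alone. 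I expect that combining Carmichael-type primitive prime divisors of Lucas sequences with cyclotomic factorisation bounds is the most promising concrete route, but making this uniform in $(k,l)$ and $n$ is where the genuine difficulty lies.
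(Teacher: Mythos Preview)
The statement you are attempting to prove is labelled \emph{Conjecture}~\ref{conj:possibleisoms} in the paper, and the paper offers no proof: it records only that the claim has been verified computationally by comparing abelianisations for $n\leq 500$. There is therefore no paper proof against which to compare your proposal.

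Your proposal is not a proof either, and you say so yourself. What you have written is a strategy outline --- use the abelianisation order (via Theorem~\ref{thm:Gamman1n/2-1finiteabelianisation} and formula~(\ref{eq:Gaborder})) as the distinguishing invariant, and in case~(a) pass to the torsion part after stripping the $\Z^2$ contribution --- together with an honest assessment of why the strategy does not close. Your diagnosis of the obstacle is accurate: the product $\prod_{j}(1+\zeta_n^{jk}+\zeta_n^{jl})$ has no closed form uniform in $(k,l)$, so ruling out accidental coincidences of order would require either nontrivial primitive-divisor input on the Lucas side or a structural invariant such as $d(\Gamma^{\mathrm{ab}})$ (and Conjecture~\ref{conj:dGamman} itself is open). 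Your approach is entirely in line with the paper's own methodology --- the computational evidence cited is precisely comparison of abelianisations --- but neither you nor the authors have a proof, and the conjecture remains open.
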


Question~5 of~\cite{BV} considers the groups~(\ref{eq:Gnmk}) and asks if it is possible to have $G_n(m,k)\cong G_{n'}(m',k')$ with $n\neq n'$. For the class of groups $\Gamma_n(k,l)$ the answer to the corresponding question is `yes' since (for example) $\Gamma_3(1,2)\cong\Gamma_6(1,2)\cong \Z*\Z$ and $\Gamma_3(1,2)\cong \Gamma_4(1,2)\cong \Z_3$. However, it is plausible that the answer is `no' if the groups involved are neither $\Z_3$ nor $\Z*\Z$. It is clear that for $n,n'\geq 3$ the group $\Z*\Z*\Z_{(2^{n/3}-(-1)^{n/3})/3} \cong \Z*\Z*\Z_{(2^{n'/3}-(-1)^{n'/3})/3}$ if and only if $n=n'$. Since $B((2^n-(-1)^n)/3,3,2^{2n/3},1)$ has order $2^n-(-1)^n$, we also have that $B((2^n-(-1)^n)/3,3,2^{2n/3},1)\cong B((2^{n'}-(-1)^{n'})/3,3,2^{2n'/3},1)$ if and only if $n=n'$. It follows from Corollary~\ref{cor:Gamman1n/2-1pairwisenonisom} that that if $\Gamma_n(1,n/2-1)\cong \Gamma_{n'}(1,n'/2-1)$ then $(n,6)=(n',6)=6$.

By Theorem~\ref{thm:ABCDsets} it remains to understand the sets $S^{FFFF}(n)$ and \linebreak $S^{TFFF}(n)$; we consider these in Sections~\ref{sec:FFFF} and~\ref{sec:210}.

\section{Elements of $S^{FFFF}(n)$}\label{sec:FFFF}

Proposition~\ref{prop:elementofFFFF} showed that  $f^{FFFF}(n)\geq 1$ for all $n\geq 10$. We now give improved lower bounds for $f^{FFFF}(n)$ when $n$ has certain factors. In order to distinguish certain groups in $S^{FFFF}(n)$ we first determine their abelianisations.

\begin{lemma}\label{lem:Gn(1,n/2)ab}
Let $n$ be even. Then $\Gamma_{n}(1,n/2)^{\mathrm{ab}}=\Z_{2^{n/2} -(-1)^{n/2}}$.
\end{lemma}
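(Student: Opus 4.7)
My approach has two steps: use the order formula~(\ref{eq:Gaborder}) to compute $|\Gamma_n(1,n/2)^{\mathrm{ab}}|$, and use the defining relations directly to show that the abelianisation is cyclic, generated (additively) by $x_0$. For the order, set $f(t)=1+t+t^{n/2}$ and observe that $\zeta_n^{jn/2}=(-1)^j$. Substituting in~(\ref{eq:Gaborder}) gives $f(\zeta_n^j)=\zeta_n^j$ when $j$ is odd (contributing a factor of modulus~$1$) and $f(\zeta_n^j)=2+\zeta_{n/2}^{\alpha}$ when $j=2\alpha$ is even. For the latter product, the identity $\prod_{\alpha=0}^{m-1}(y-\zeta_{m}^\alpha)=y^{m}-1$ with $y=-2$ and $m=n/2$ yields $\prod_{\alpha=0}^{n/2-1}(2+\zeta_{n/2}^\alpha)=(-1)^{n/2}((-2)^{n/2}-1)=2^{n/2}-(-1)^{n/2}$. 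Hence $|\Gamma_n(1,n/2)^{\mathrm{ab}}|=2^{n/2}-(-1)^{n/2}$.

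For cyclicity, I work additively, where the relations become $x_i+x_{i+1}+x_{i+n/2}=0$ for $0\leq i<n$. Subtracting the relation indexed by $i+n/2$ (noting $i+n\equiv i\pmod n$) from the one indexed by $i$ produces $x_{i+1}=x_{i+n/2+1}$, i.e.\ $x_j=x_{j+n/2}$ for every $j$. Feeding this back into the original relation gives $x_{i+1}=-2x_i$, so by induction $x_i=(-2)^ix_0$ for all $i$. The identification $x_0=x_{n/2}$ then forces $((-2)^{n/2}-1)x_0=0$, equivalently $(2^{n/2}-(-1)^{n/2})x_0=0$ after multiplying by $(-1)^{n/2}$. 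Thus the abelianisation is a cyclic quotient of $\Z_{2^{n/2}-(-1)^{n/2}}$, and combined with the order calculation it must equal $\Z_{2^{n/2}-(-1)^{n/2}}$.

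The argument is essentially bookkeeping, so I do not foresee a genuine obstacle. The most delicate step is the collapse $x_j=x_{j+n/2}$, which is what makes the presentation abelianise to something so small; once this is in hand everything is forced. An alternative would be a Smith-normal-form computation on the circulant relation matrix, but the direct route via the relations is shorter and avoids diagonalisation, while still using~(\ref{eq:Gaborder}) in a purely numerical way at the end to pin down the order.
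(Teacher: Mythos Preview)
Your proof is correct and the key step---subtracting the relation indexed by $i+n/2$ from the one indexed by $i$ to obtain the collapse $x_j=x_{j+n/2}$---is exactly the paper's argument. The only difference is in the finish: the paper, after the collapse, reduces to the presentation $\langle x_0,\ldots,x_{n/2-1}\mid x_ix_{i+1}x_i\rangle^{\mathrm{ab}}$ and simply asserts this is $\Z_{2^{n/2}-(-1)^{n/2}}$, whereas you make the argument self-contained by deriving cyclicity from the recurrence $x_{i+1}=-2x_i$ and pinning down the order via~(\ref{eq:Gaborder}). Your route is slightly longer but leaves nothing to the reader; note, incidentally, that the order computation is not strictly needed, since once you have $x_i=(-2)^ix_0$ the relations $x_i+x_{i+1}+x_{i+n/2}=0$ are automatically satisfied modulo $((-2)^{n/2}-1)x_0=0$, so the cyclicity argument alone already gives the exact group.
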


\begin{proof}
The $i$'th and $(i+n/2)$'th relators of $\Gamma_{n}(1,n/2)^{\mathrm{ab}}$ are $x_ix_{i+1}x_{i+n/2}$ and $x_{i+n/2}x_{i+n/2+1}x_{i}$, respectively. Together these imply that $x_{i+n/2+1}=$ $(x_ix_{i+n/2})^{-1}=x_{i+1}$ so $x_i=x_{i+n/2}$ for all $0\leq i< n$. Thus
\begin{alignat*}{1}
\Gamma_n(1,n/2)^\mathrm{ab}
&=\pres{x_0,\ldots ,x_{n-1}}{x_ix_{i+1}x_{i+n/2},\ x_i=x_{i+n/2}\ (0\leq i<n)}^\mathrm{ab}\\
&=\pres{x_0,\ldots ,x_{n/2-1}}{x_ix_{i+1}x_{i}\ (0\leq i<n/2)}^\mathrm{ab}\cong \Z_{2^{n/2} -(-1)^{n/2}}.
\end{alignat*}
\end{proof}

\begin{lemma}\label{lem:d(gamma_n(1,3))}
Let $8|n$. Then the minimum number of generators \linebreak $d(\Gamma_n(1,3))^\mathrm{ab})\geq 3$.
\end{lemma}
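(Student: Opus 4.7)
The plan is to mimic the argument used in the proof of Corollary~\ref{cor:Gamman1n/2-1n=8mod16}: exhibit $\Gamma_8(1,3)$ as a quotient of $\Gamma_n(1,3)$, then transfer the known lower bound on the generator number of $\Gamma_8(1,3)^{\mathrm{ab}}$ (from Example~\ref{ex:G_8(1,3)}) to $\Gamma_n(1,3)^{\mathrm{ab}}$. Since an abelian group that surjects onto another needs at least as many generators, this gives the desired inequality.

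Concretely, write $n=8s$ (where $s\geq 1$) and start from the presentation
\[
\Gamma_n(1,3)=\pres{x_0,\ldots,x_{n-1}}{x_ix_{i+1}x_{i+3}\ (0\leq i< n)},
\]
with subscripts mod $n$. Impose the additional relations $x_i=x_{i+8}$ for all $0\leq i< n$. In the resulting quotient every subscript depends only on its residue mod~$8$, so after relabelling I obtain the finitely presented quotient
\[
\pres{x_0,\ldots,x_7}{x_ix_{i+1}x_{i+3}\ (0\leq i<8)}=\Gamma_8(1,3),
\]
where the $n$ relators $x_ix_{i+1}x_{i+3}$ collapse to the eight relators of $\Gamma_8(1,3)$ because $i\mapsto i\bmod 8$ identifies them. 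Thus $\Gamma_n(1,3)$ surjects onto $\Gamma_8(1,3)$ and, abelianising both sides, $\Gamma_n(1,3)^{\mathrm{ab}}$ surjects onto $\Gamma_8(1,3)^{\mathrm{ab}}$.

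By Example~\ref{ex:G_8(1,3)} we have $\Gamma_8(1,3)^{\mathrm{ab}}\cong \Z_3\oplus\Z_3\oplus\Z_3$, which plainly requires three generators. Since any surjection of finitely generated abelian groups $A\twoheadrightarrow B$ satisfies $d(A)\geq d(B)$, the conclusion $d(\Gamma_n(1,3)^{\mathrm{ab}})\geq 3$ follows immediately.

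There is no real obstacle here: the only thing to verify carefully is that reducing subscripts mod~$8$ is consistent with the mod~$n$ convention (which it is, since $8\mid n$), and that it correctly collapses the relator set. The proof is therefore essentially a two-line invocation of the divisibility $8\mid n$ together with the abelianisation computation of Example~\ref{ex:G_8(1,3)}.
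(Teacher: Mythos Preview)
Your proof is correct and is essentially identical to the paper's own argument: both impose the relations $x_i=x_{i+8}$ to exhibit $\Gamma_8(1,3)$ as a quotient of $\Gamma_n(1,3)$ and then invoke $d(\Gamma_8(1,3)^{\mathrm{ab}})=3$. Your write-up is slightly more explicit about the abelianisation step and the inequality $d(A)\geq d(B)$ for a surjection $A\twoheadrightarrow B$, but the approach is the same.
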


\begin{proof}
Here $n=8t$ for some $t\geq 1$ and so
\[\Gamma_n(1,3)=\pres{x_0,\ldots, x_{8t-1}}{x_ix_{i+1}x_{i+3}\ (0\leq i<8t)}\\
\]
which maps onto
\begin{alignat*}{1}
Q&=\pres{x_0,\ldots, x_{8t-1}}{x_ix_{i+1}x_{i+3},x_i=x_{i+8}\ (0\leq i<8t)}\\
&=\pres{x_0,\ldots, x_7}{x_ix_{i+1}x_{i+3}\ (0\leq i<8)}\\
&=\Gamma_8(1,3)
\end{alignat*}
and since $d(\Gamma_8(1,3)^\mathrm{ab})=3$ the result follows.
\end{proof}

\begin{corollary}\label{cor:SFFFFwith8|n}
Let $n\geq 16$ and $8|n$. Then $\Gamma_n(1,n/2)$, $\Gamma_n(1,3)$ are non-isomorphic elements of $S^{FFFF}(n)$, so $f^{FFFF}(n)\geq2$.
\end{corollary}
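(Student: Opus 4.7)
My plan is to chain together the membership results already available in the excerpt with an abelianisation invariant that separates the two groups. First, the membership of $\Gamma_n(1,n/2)$ in $S^{FFFF}(n)$ is immediate from Proposition~\ref{prop:elementofFFFF}, since $8\mid n$ forces $n$ to be even (and at least $8$). For $\Gamma_n(1,3)$ I would verify directly that each of (A),(B),(C),(D) fails with $(k,l)=(1,3)$: the relevant quantities are $k+l=4$, $2l-k=5$, $2k-l=-1$, $3k=3$, $3l=9$, $3(l-k)=6$, and the doublings $2(k+l)=8$, $2(2l-k)=10$, $2(2k-l)=-2$, all of which are too small in absolute value to be divisible by $n$ once $n\geq 16$; moreover $3\nmid (k+l)=4$, so (A) fails regardless of whether $3\mid n$. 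Thus $\Gamma_n(1,3)\in S^{FFFF}(n)$.

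For the non-isomorphism, the key point is that the abelianisation, and in particular its minimum number of generators, is an isomorphism invariant. By Lemma~\ref{lem:Gn(1,n/2)ab} we have $\Gamma_n(1,n/2)^{\mathrm{ab}}\cong \Z_{2^{n/2}-(-1)^{n/2}}$, which is cyclic, so $d(\Gamma_n(1,n/2)^{\mathrm{ab}})=1$. On the other hand, Lemma~\ref{lem:d(gamma_n(1,3))} gives $d(\Gamma_n(1,3)^{\mathrm{ab}})\geq 3$. Since $1\neq 3$, these abelianisations are non-isomorphic, so the groups themselves are non-isomorphic, and the bound $f^{FFFF}(n)\geq 2$ follows.

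I do not foresee a real obstacle; the argument is essentially a routine assembly of the preceding lemmas. The only subtlety worth flagging is the implicit range of the hypothesis: when $n=8$ one has $2(k+l)=8\equiv 0\pmod{n}$, so (D) holds and in fact $\Gamma_8(1,3)\in S^{FFFT}(8)\setminus S^{FFFF}(8)$ (cf. Proposition~\ref{prop:FFFT}). Thus the membership verification for $\Gamma_n(1,3)$ genuinely requires $n\geq 16$, and the corollary should be read under that implicit strengthening of $8\mid n$; for all such $n$ the calculation above goes through unchanged.
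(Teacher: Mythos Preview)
Your argument is correct and follows essentially the same route as the paper: the non-isomorphism is obtained by comparing $d(\Gamma_n(1,n/2)^{\mathrm{ab}})=1$ from Lemma~\ref{lem:Gn(1,n/2)ab} with $d(\Gamma_n(1,3)^{\mathrm{ab}})\geq 3$ from Lemma~\ref{lem:d(gamma_n(1,3))}. The paper's proof records only this comparison and does not spell out the membership checks you include.

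Your remark about $n=8$ is well taken and is a genuine boundary issue that the paper does not address: for $(n,k,l)=(8,1,3)$ one has $2(k+l)=8\equiv 0\pmod 8$, so $D(8,1,3)=T$ and $\Gamma_8(1,3)\in S^{FFFT}(8)$ rather than $S^{FFFF}(8)$ (consistent with Example~\ref{ex:G_8(1,3)} and Proposition~\ref{prop:FFFT}). Thus the corollary, read literally, fails at $n=8$; your reading that it should be taken for $n\geq 16$ with $8\mid n$ is the correct one, and your verification that (A),(B),(C),(D) all fail for $(k,l)=(1,3)$ once $n\geq 16$ is accurate.
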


\begin{proof}
By Lemma~\ref{lem:Gn(1,n/2)ab} we have $d(\Gamma_n(1,n/2))=1$ and by Lemma~\ref{lem:d(gamma_n(1,3))} we have $d(\Gamma_n(1,3))^\mathrm{ab})\geq 3$ so $\Gamma_n(1,n/2)\not \cong\Gamma_n(1,3)$.
\end{proof}

Lengthy applications of the formula~(\ref{eq:Gaborder}), along the lines of the proof of~\cite[Theorem~4.4]{BW1}, give the following group orders (where $0$ indicates an infinite group):

\begin{lemma}\label{lem:abelianorders}
\begin{itemize}
  \item[(a)] If $4|n$ then $|\Gamma_n(1,n/4)^\mathrm{ab}|=|(2^{n/4}-(-1)^{n/4})(2^{n/4}+1-(-\sqrt{2})^{n/4}\cdot 2 \cos ((n-8)\pi/16)) |$;
  \item[(b)] if $6|n$ then $|\Gamma_n(1,n/6)^\mathrm{ab}|=|(2^{n/6}-(-1)^{n/6})(3^{n/6}+1-(-\sqrt{3})^{n/6}\cdot 2 \cos ((n-12)\pi/36))
  (2-(-1)^{n/6}\cdot 2 \cos (n-12)\pi/18   )            |$.
\end{itemize}
\end{lemma}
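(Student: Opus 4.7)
The plan is to apply the root-of-unity formula~(\ref{eq:Gaborder}) to $f(t)=1+t+t^{n/4}$ for part~(a) and $f(t)=1+t+t^{n/6}$ for part~(b), with $g(t)=t^n-1$. The key observation is that $\zeta_n^{nj/4}=i^j$ (respectively $\zeta_n^{nj/6}=\zeta_6^{j}$), so I would split the index $j\in\{0,\dots,n-1\}$ into congruence classes modulo $4$ (resp.\ $6$), writing $j=4\alpha+r$ (resp.\ $j=6\alpha+r$) with $0\le\alpha<n/4$ (resp.\ $n/6$) and a fixed residue $r\in\{0,1,2,3\}$ (resp.\ $r\in\{0,\dots,5\}$).

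In the ``real'' classes (where $i^r=\pm 1$, respectively $\zeta_6^r=\pm 1$) the subproduct can be evaluated directly from $\prod_{\alpha=0}^{m-1}(x+\zeta_m^\alpha)=x^m-(-1)^m$. For~(a) the class $r=0$ gives $\prod_\alpha(2+\zeta_{n/4}^\alpha)=2^{n/4}-(-1)^{n/4}$, and $r=2$ contributes a product of unit-modulus roots of unity; for~(b) the analogous contributions are $2^{n/6}-(-1)^{n/6}$ (from $r=0$) and a unit-modulus factor (from $r=3$).

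The remaining residues pair into complex-conjugate subproducts under the reindexing $\alpha\mapsto n/4-1-\alpha$ (resp.\ $n/6-1-\alpha$). Writing $\omega=e^{i\theta}$, a short expansion gives $(1+i+\omega)(1-i+\omega^{-1})=3+2\cos\theta+2\sin\theta=|1+\sqrt 2\,e^{i(\theta-\pi/4)}|^2$ (using $\cos\theta+\sin\theta=\sqrt 2\cos(\theta-\pi/4)$), handling the pair $r=1,3$ in part~(a). Invoking the classical identity $\prod_{\alpha=0}^{m-1}(1-c\zeta_m^\alpha)=1-c^m$ with $m=n/4$ and $c=-\sqrt 2\,e^{-i\pi/4}\zeta_n$ yields $c^{n/4}=(-\sqrt 2)^{n/4}e^{i\pi(8-n)/16}$, whence $|1-c^{n/4}|^2=2^{n/4}+1-2(-\sqrt 2)^{n/4}\cos((n-8)\pi/16)$; multiplying the three pieces proves~(a). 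Part~(b) proceeds in parallel with two conjugate pairs: the pair $r=1,5$ yields $(1+\zeta_6+\omega)(1+\overline{\zeta_6}+\omega^{-1})=4+2\sqrt 3\cos(\theta-\pi/6)=|1+\sqrt 3\,e^{i(\theta-\pi/6)}|^2$ (using $\zeta_6+\overline{\zeta_6}=1$ together with the sum-to-product identity $\cos A+\cos B=2\cos((A{+}B)/2)\cos((A{-}B)/2)$), and the pair $r=2,4$ yields $(1+\zeta_3+\omega)(1+\overline{\zeta_3}+\omega^{-1})=2+2\cos(\theta-\pi/3)=|1+e^{i(\theta-\pi/3)}|^2$ (using $\zeta_3+\overline{\zeta_3}=-1$). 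Applying $\prod(1-c\zeta_{n/6}^\alpha)=1-c^{n/6}$ with $c=-\sqrt 3\,e^{-i\pi/6}\zeta_n$ and $c=-e^{-i\pi/3}\zeta_n^2$ respectively produces exactly the two cosine factors in the statement.

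The main obstacle is not conceptual but book-keeping of phases: the values of $c^{n/4}$ and $c^{n/6}$ must emerge with exactly the arguments $\pi(8-n)/16$, $\pi(12-n)/36$ and $\pi(12-n)/18$, and each conjugate pair must be collapsed via a sum-to-product identity into a \emph{single} cosine before the final formula crystallises. Otherwise the argument is a direct elaboration of the even/odd-index technique already used in the proof of Theorem~\ref{thm:Gamman1n/2-1finiteabelianisation}, and parallels the approach of~\cite[Theorem~4.4]{BW1} as the paper remarks.
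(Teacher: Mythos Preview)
Your proposal is correct and follows precisely the route the paper indicates: the paper does not give a detailed proof but states that the result follows from ``lengthy applications of the formula~(\ref{eq:Gaborder}), along the lines of the proof of~\cite[Theorem~4.4]{BW1}'', with full details deferred to~\cite[Section~5.4]{EsamThesis}. Your residue-class splitting modulo $4$ (respectively $6$), evaluation of the real classes via $\prod_\alpha(x+\zeta_m^\alpha)=x^m-(-1)^m$, and collapsing of the conjugate pairs via $\prod_\alpha(1-c\zeta_m^\alpha)=1-c^m$ is exactly this method, and the phase bookkeeping you outline does produce the stated cosine arguments.
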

Together with Lemma~\ref{lem:Gn(1,n/2)ab} these allow us to obtain the following:

\begin{corollary}\label{cor:furthernonisoms}
\begin{itemize}
  \item[(a)] If $n\equiv 4,8,12$~mod~$16$ and $n\geq 12$ then $\Gamma_n(1,n/2)$, $\Gamma_n(1,n/4)$ are pairwise non-isomorphic elements of $S^{FFFF}$ and so $f^{FFFF}(n)\geq 2$;

    \item[(b)] if $n\equiv 0,24$~mod~$36$ then $\Gamma_n(1,n/2)$, $\Gamma_n(1,n/4)$, $\Gamma_n(1,n/6)$ are pairwise non-isomorphic elements of $S^{FFFF}$ and so $f^{FFFF}(n)\geq 3$.
\end{itemize}

\end{corollary}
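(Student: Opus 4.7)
The plan is to first confirm that each listed group lies in $S^{FFFF}(n)$, then distinguish them via the orders of their abelianisations, in the same spirit as Corollary~\ref{cor:SFFFFwith8|n} but using orders rather than minimum numbers of generators. Verifying membership is a routine case analysis on the divisibility conditions defining (A)--(D) at the parameters $(1,n/2)$, $(1,n/4)$, and (in part~(b)) $(1,n/6)$: each disjunct of (B), (C), (D) yields a small congruence on $n$ that is ruled out by $n\equiv 4,8,12 \pmod{16}$ (respectively $12\mid n$), while (A) is eliminated by a direct check on $k+l$ modulo~3 in each residue class. Once (A) fails, the abelianisations are finite by line~1 of Table~\ref{tab:EdjvetWilliams}, so their orders are well-defined positive integers.

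To separate the groups, invoke Lemma~\ref{lem:Gn(1,n/2)ab} to obtain $|\Gamma_n(1,n/2)^\mathrm{ab}|=2^{n/2}-1$ (noting $n/2$ is even for every $n$ in the corollary) and Lemma~\ref{lem:abelianorders}(a),(b) for $|\Gamma_n(1,n/4)^\mathrm{ab}|$ and $|\Gamma_n(1,n/6)^\mathrm{ab}|$. Expanding the product in Lemma~\ref{lem:abelianorders}(a) gives $|\Gamma_n(1,n/4)^\mathrm{ab}|=2^{n/2}+O(2^{3n/8})$, since $|(-\sqrt{2})^{n/4}|=2^{n/8}$; Lemma~\ref{lem:abelianorders}(b) gives $|\Gamma_n(1,n/6)^\mathrm{ab}|=\Theta(6^{n/6})$. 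Because $6^{1/6}<2^{1/2}$, the last quantity has strictly smaller exponential growth than the other two, so it can only coincide with either of them for finitely many $n$. For the comparison between the first two orders, the correction term of order $2^{3n/8}$ is non-zero provided the factor $2\cos((n-8)\pi/16)$ does not vanish; evaluating this on each residue class of $n$ modulo~32 shows it is indeed non-zero in all cases relevant to $n\equiv 4,8,12\pmod{16}$, which forces a strict inequality for all sufficiently large $n$. The finitely many small values of $n$ in each arithmetic progression are then verified by direct substitution into the formulae (for instance, $n=20$ gives $|\Gamma_{20}(1,10)^\mathrm{ab}|=1023$ while $|\Gamma_{20}(1,5)^\mathrm{ab}|=33\cdot 25=825$).

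The main obstacle will be controlling the correction term in Lemma~\ref{lem:abelianorders}(a) tightly enough to certify a strict inequality with $2^{n/2}-1$ beyond an explicit threshold: both orders share the dominant term $2^{n/2}$, so one must ensure the subdominant contribution never accidentally conspires to reproduce exactly $2^{n/2}-1$. Concretely, this amounts to showing that $(-\sqrt{2})^{n/4}\cdot 2\cos((n-8)\pi/16)$, once reduced modulo~32 so that its value is pinned down, has absolute value at least some explicit positive constant in each relevant residue class, after which the remaining bookkeeping on small $n$ can be carried out computationally, consistent with the paper's broader reliance on GAP.
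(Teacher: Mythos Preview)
Your strategy---verify membership in $S^{FFFF}$, then separate the groups by comparing the abelianisation orders supplied by Lemma~\ref{lem:Gn(1,n/2)ab} and Lemma~\ref{lem:abelianorders}---is exactly the paper's approach; the text says these lemmas ``allow us to obtain'' the corollary and defers the details to~\cite{EsamThesis}. For part~(a) your sketch is on target: the essential point is that $\cos((n-8)\pi/16)\neq 0$ precisely when $16\nmid n$, which is why the hypothesis is $n\equiv 4,8,12\pmod{16}$, and then the subdominant term of size $\Theta(2^{3n/8})$ forces the two orders apart.

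For part~(b), however, there is a genuine gap in your plan. The hypothesis $12\mid n$ permits $48\mid n$, in which case $16\mid n$, the cosine in Lemma~\ref{lem:abelianorders}(a) \emph{vanishes}, and one computes
\[
|\Gamma_n(1,n/4)^{\mathrm{ab}}|=(2^{n/4}-1)(2^{n/4}+1)=2^{n/2}-1=|\Gamma_n(1,n/2)^{\mathrm{ab}}|.
\]
This is precisely case~(1) of Theorem~\ref{thm:n<210}, where the paper itself records that $\Gamma_n(1,n/4)$ and $\Gamma_n(1,n/2)$ have isomorphic abelianisations for every $n\equiv 0\pmod{16}$; so no abelianisation argument---orders or otherwise---can separate them there, and your asymptotic inequality collapses. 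Your ``routine'' membership check also fails: when $n\equiv 12\pmod{36}$ one has $1+n/6\equiv 0\pmod 3$, hence $A(n,1,n/6)=T$ and $\Gamma_n(1,n/6)\notin S^{FFFF}(n)$ (already at $n=12$, where $\Gamma_{12}(1,2)\cong\Z*\Z$ by Table~\ref{tab:CRSgroupsn<30}). Thus part~(b), read literally over all $n\equiv 0\pmod{12}$, does not follow from Lemmas~\ref{lem:Gn(1,n/2)ab} and~\ref{lem:abelianorders} alone; whatever the thesis does here---additional residue restrictions, replacement parameters, or an extra invariant---you would need to locate and supply it.
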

Full details are available in the first named author's Ph.D. thesis~(\cite[Section~5.4]{EsamThesis}).

\section{The abelianisation as invariant}\label{sec:210}

In this section we assess how effective the abelianisation is as an invariant for proving non-isomorphism of non-isomorphic groups $\Gamma_n(k_1,l_1),\Gamma_n(k_2,l_2)$ where none of (B),(C),(D) hold for either presentation and where $n<210$. In this situation Theorem~\ref{thm:n<210} (below) identifies when non-isomorphic groups defined by pairs of parameters $(n,k_1,l_1),(n,k_2,l_2)$ cannot be distinguished by their abelianisations. For instance, it shows that if $\Gamma_{80}(k_1,l_1)^\mathrm{ab}\cong  \Gamma_{80}(k_2,l_2)^\mathrm{ab}$ and $\Gamma_{80}(k_1,l_1)\not \cong  \Gamma_{80}(k_2,l_2)$  then either
\begin{itemize}
\item[(i)] $\Gamma_{80}(k_i,l_i)\cong \Gamma_{80}(1,20)$ and $\Gamma_{80}(k_j,l_j)\cong \Gamma_{80}(1,40)$ ($\{i,j\}=\{1,2\}$); or
\item[(ii)] $\Gamma_{80}(k_i,l_i)\cong \Gamma_{80}(1,16)$ and $\Gamma_{80}(k_j,l_j)\cong \Gamma_{80}(1,17)$ ($\{i,j\}=\{1,2\}$).
\end{itemize}

The theorem was obtained and proved by the use of a computer program written in GAP. For each $n$ the program considers a set $S$ of pairs $(k,l)$ such that $(n,k,l)=1$, $1\leq k,l<n$, $k\neq l$, that represent all groups $\Gamma_n(k,l)$ for which none of (B),(C),(D) hold. If by using Lemma~\ref{lem:gammaiso} or Lemma~\ref{lem:gamman1liso} it is found that two pairs $(k_1,l_1),(k_2,l_2)\in S$ yield isomorphic groups $\Gamma_n(k_1,l_1),\Gamma_n(k_2,l_2)$ then one of the pairs is removed from $S$. This is repeated until $S$ is as small as possible. The abelianisation $\Gamma_n(k,l)^\mathrm{ab}$ is calculated for each remaining pair $(k,l)$ in $S$ and the pairs $(k_1,l_1)$, $(k_2,l_2)$ for which $\Gamma_n(k_1,l_1)^\mathrm{ab}\cong\Gamma_n(k_2,l_2)^\mathrm{ab}$ are recorded; these are the pairs presented in the statement of the theorem.

For efficiency reasons, results of this paper are used in defining the initial set $S$. The number 210 was chosen as it is the least integer with four prime factors so Corollary~\ref{cor:Gammap^aq^bLsets} could be invoked in many cases, and so reducing the scale of our computations.

\begin{theorem}\label{thm:n<210}
Let $3\leq n<210$, $1\leq k_i,l_i<n$, $k_i\neq l_i$, $B(n,k_i,l_i)=C(n,k_i,l_i)=D(n,k_i,l_i)=F$ ($\{i,j\}=\{1,2\}$). If $\Gamma_n(k_1,l_1)^\mathrm{ab}\cong\Gamma_n(k_2,l_2)^\mathrm{ab}$ and $\Gamma_n(k_1,l_1)\not \cong\Gamma_n(k_2,l_2)$ then $\Gamma_n(k_1,l_1)\cong \Gamma_n(k_1',l_1')$ and $\Gamma_n(k_2,l_2)\cong \Gamma_n(k_2',l_2')$, where $n$, $(k_1',l_1')$, $(k_2',l_2')$
satisfy one of the following:
\begin{itemize}
  \item[(0)] $(n,(k_1',l_1'),(k_2',l_2'))=(22,(1,4),(1,5)), (46,(1,7),(1,11))$;
  \item[(1)] $n\equiv 0$~mod~$16$, $\{(k_1',l_1'),(k_2',l_2')\}=\{(1,n/4),(1,n/2)\}$;
  \item[(2)] $n\equiv 0$~mod~$18$, $\{(k_1',l_1'),(k_2',l_2')\}=\{(1,n/3-2),(1,n/3+3)\}$;
  \item[(3)] $n\equiv 20,30$~mod~$50$, $\{(k_1',l_1'),(k_2',l_2')\}=\{(1,n/5),(1,n/5+1)\}$;
  \item[(4)] $n\equiv 0,40$~mod~$50$, $\{(k_1',l_1'),(k_2',l_2')\}=\{(1,n/5),(1,2n/5)\}$;
  \item[(5)] $n\equiv 10$~mod~$50$, $\{(k_1',l_1'),(k_2',l_2')\}=\{(1,n/5+1),(1,2n/5+1)\}$;
  \item[(6)] $n=2^{2\alpha+1}\cdot 3$ ($\alpha\geq 2$), $\{(k_1',l_1'),(k_2',l_2')\}=\{(1,n/12+1),(1,5n/12)\}$;
  \item[(7)] $n=2^{2\alpha}\cdot 3$ ($\alpha\geq 2$), $\{(k_1',l_1'),(k_2',l_2')\}=\{(1,n/12),(1,5n/12+1)\}$;
  \item[(8)] $n=2^\gamma\cdot 3^\beta$ ($\gamma\geq 4, \beta\geq 2$), $\{(k_1',l_1'),(k_2',l_2')\}=\{(1,n/12),(1,5n/12)\}$.
\end{itemize}
\end{theorem}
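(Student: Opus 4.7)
The plan is an exhaustive computer-assisted enumeration in GAP. For each $n$ in the range $3\leq n<210$ I would build a list $S_n$ of representative parameter pairs $(k,l)$ satisfying the hypotheses of the theorem, reduce $S_n$ modulo all isomorphisms already available, compute the abelianisations, and read off the coincidences, then match these against the candidate families (0)--(8).

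First, for fixed $n$ I would enumerate all admissible pairs $(k,l)$ with $1\leq k,l<n$, $k\neq l$, $(n,k,l)=1$ and $B(n,k,l)=C(n,k,l)=D(n,k,l)=F$. I would then orbit each pair under the action induced by Lemma~\ref{lem:gammaiso}(i)--(v) and keep a single representative per orbit. Whenever $(n,k)=1$, $(n,l)=1$ or $(n,k-l)=1$, Lemma~\ref{lem:(n,k)=1or(n,l)=1or(n,k-l)=1} lets me replace $(k,l)$ by $(1,l')$ for some $l'$, and I would further compress these using Lemma~\ref{lem:gamman1liso} and Lemma~\ref{lem:Gamman1lLsets}. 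For the many values of $n<210$ with at most two prime factors, Corollary~\ref{cor:Gammap^aq^bLsets}(a) reduces $S_n$ directly to the explicit set $\{(1,l'):l'\in L\}$; this is why the threshold $n<210$ is convenient, since $210$ is the smallest integer with four prime factors, so for most $n$ the reduction is essentially automatic.

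Next, for each surviving representative $(k,l)$ I would compute $\Gamma_n(k,l)^\mathrm{ab}$ as the cokernel of the $n\times n$ circulant relation matrix (whose $i$'th row has ones in positions $i$, $i+k$, $i+l$ mod $n$) via its Smith normal form, and record the invariant-factor signature. I would then scan $S_n$ for all unordered pairs of representatives with matching signatures and output the list. For the infinite families in (1)--(8), verification of the abelianisation coincidence is then a direct arithmetic check that could be carried out symbolically: for instance, in case (1), $16\mid n$, Lemma~\ref{lem:Gn(1,n/2)ab} and Lemma~\ref{lem:abelianorders}(a) identify $|\Gamma_n(1,n/2)^\mathrm{ab}|$ and $|\Gamma_n(1,n/4)^\mathrm{ab}|$, so one compares the resulting finite invariant factors directly.

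The main obstacle is pattern recognition rather than any single hard computation: after the GAP run produces the raw list of coincidences, one must discover the arithmetic rules in $n$ that describe each family (1)--(8), isolate the two sporadic exceptions at $n=22$ and $n=46$ in case (0) that fit no family, and verify that nothing has been missed. A secondary subtlety is to keep $S_n$ as small as is reasonable: if two representatives happen to be isomorphic via a relation not captured by Lemma~\ref{lem:gammaiso} or Lemma~\ref{lem:gamman1liso} this would be harmless for the statement (whose conclusion is conditional on $\Gamma_n(k_1,l_1)\not\cong\Gamma_n(k_2,l_2)$) but it would inflate $S_n$ and risk reporting spurious coincidences, so I would also sieve out any pair that the invariants of Table~\ref{tab:EdjvetWilliams} or further known arithmetic reductions force to coincide, before the abelianisation comparison is performed.
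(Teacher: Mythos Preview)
Your proposal is essentially the same approach as the paper's: a GAP-based exhaustive enumeration for each $n<210$, reduction of the parameter set via Lemma~\ref{lem:gammaiso} and Lemma~\ref{lem:gamman1liso} (with Corollary~\ref{cor:Gammap^aq^bLsets} exploited because $210$ is the least integer with four prime factors), comparison of abelianisations on the surviving representatives, and then organisation of the resulting coincidences into the families (0)--(8). The paper's description of the computation matches your plan almost step for step; your extra remarks about Smith normal form and about sieving via Table~\ref{tab:EdjvetWilliams} are reasonable implementation details but do not change the method.
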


In some cases we have been able to prove non-isomorphism of the groups $\Gamma_n(k_1',l_1')$ and $\Gamma_n(k_2',l_2')$ arising in Theorem~\ref{thm:n<210}. For $n\leq 60$ the pairs of groups arising in Theorem~\ref{thm:n<210} are presented in Table~\ref{tab:nonisomgps60}, where we record when we have been able to prove non-isomorphism. (For $60<n<210$ we have not been able to prove non-isomorphism of the groups in any pair.) It was shown that $\Gamma_{18}(1,4)\not \cong \Gamma_{18}(1,9)$ by comparing the second derived quotients; it was shown that $\Gamma_{36}(1,11)\not \cong \Gamma_{36}(1,15)$ by showing that the former has an index 2 subgroup whereas the latter does not; in all other cases non-isomorphism was shown by comparing abelianisations of the groups' index~3 subgroups.

We further investigated pairs of groups $\Gamma_n(k_1',l_1'), \Gamma_n(k_2',l_2')$ corresponding to cases (1)--(8) for higher values of~$n$. For $n\leq 1000$ we have verified computationally that the groups in each pair have isomorphic abelianisations and that they cannot be proved isomorphic using Lemma~\ref{lem:gammaiso}. Therefore cases (1)--(8) provide a source of potentially non-isomorphic groups that cannot be distinguished by their abelianisations.

\begin{table}
\begin{center}
\begin{tabular}{cc}
  \begin{tabular}{|c|r@{\ }l|c|c|c|}\hline
  $n$ & $\{(k_1',l_1'),$ & $(k_2',l_2')\}$ &  Isom.\\\hline
  $10$ & $\{(1,3),$ & $(1,5)\}$ & No\\\hline
  $16$ & $\{(1,4),$ & $(1,8)\}$ & No\\\hline
  $18$ & $\{(1,4),$ & $(1,9)\}$ & No\\\hline
  $20$ & $\{(1,4),$ & $(1,5)\}$ & No\\\hline
  $22$ & $\{(1,4),$ & $(1,5)\}$ & No\\\hline
  $30$ & $\{(1,6),$ & $(1,7)\}$ & No\\\hline
  $32$ & $\{(1,8),$ & $(1,16)\}$ & ?\\\hline
  $36$ & $\{(1,11),$ & $(1,15)\}$ & No\\\hline
  \end{tabular}
  &
  \begin{tabular}{|c|r@{\ }l|c|c|c|}\hline
  $n$ & $\{(k_1',l_1'),$ & $(k_2',l_2')\}$ &  Isom.\\\hline
  $40$ & $\{(1,8),$ & $(1,16)\}$ & No\\\hline
  $46$ & $\{(1,7),$ & $(1,11)\}$ & No\\\hline
  $48$ & $\{(1,12),$ & $(1,24)\},$ & ?\\
  & $\{(1,4),$ & $(1,21)\}$ & ?\\\hline
  $50$ & $\{(1,10),$ & $(1,20)\}$ & No\\\hline
  $54$ & $\{(1,16),$ & $(1,21)\}$ & ?\\\hline
  $60$ & $\{(1,13),$ & $(1,25)\}$ & No\\
        &          &          &\\\hline
  \end{tabular}
  \end{tabular}
  \end{center}
\caption{Pairs of (possibly) non-isomorphic groups\label{tab:nonisomgps60}}
\end{table}

\section*{Acknowledgements}

We thank the referees of an earlier version of this paper for the helpful suggestions.

\bigskip

  \textsc{Department of Mathematics, Faculty of Education, Omar-Almukhtar University, Albyda, Libya.}\par\nopagebreak
  \textit{E-mail address}, \texttt{essamadeen@yahoo.com}

  \medskip

  \textsc{Department of Mathematical Sciences, University of Essex, Wivenhoe Park, Colchester, Essex CO4 3SQ, UK.}\par\nopagebreak
  \textit{E-mail address}, \texttt{Gerald.Williams@essex.ac.uk}

\end{document}